\theoremstyle{plain}
\newtheorem{theorem}{Theorem}[section]
\newtheorem{corollary}[theorem]{Corollary}
\newtheorem{lemma}[theorem]{Lemma}
\newtheorem{proposition}[theorem]{Proposition}
\theoremstyle{definition}
\newtheorem{example}[theorem]{Example}
\newtheorem{remark}[theorem]{Remark}
\newtheorem{definition}[theorem]{Definition}
\title{Equivariant twisted Real $K$-theory of compact Lie groups}
\author{Chi-Kwong Fok}
\begin{document}
\maketitle
\begin{abstract}
	Let $G$ be a compact, connected, and simply-connected Lie group viewed as a $G$-space via the conjugation action. The Freed-Hopkins-Teleman Theorem (FHT) asserts a canonical link between the equivariant twisted $K$-homology of $G$ and its Verlinde algebra. In this paper, we give a generalization of FHT in the presence of a Real structure of $G$. Along the way we develop preliminary materials necessary for this generalization, which are of independent interest in their own right. These include the definitions of Real Dixmier-Douady bundles, the Real third cohomology group which is shown to classify the former, and Real $\text{Spin}^c$ structures.\\
	\\
	\emph{2010 Mathematics Subject Classification: 19L50; 55N91}
\end{abstract}
	
\tableofcontents
\section{Introduction}
Let $G$ be a compact connected Lie group. The Freed-Hopkins-Teleman Theorem (FHT) is a recent deep result which asserts a canonical isomorphism between the twisted equivariant $K$-homology of $G$ with its Verlinde algebra, the representation group of positive energy representations of the loop group $LG$ equipped with the intricately defined ring structure called the fusion product (cf. \cite{Fr}, \cite{FHT1}, \cite{FHT2}, \cite{FHT3}, Theorem \ref{FHT}). The Verlinde algebra is an object of great interest in mathematical physics and algebraic geometry. One of the remarkable aspects of the Freed-Hopkins-Teleman Theorem is that it provides an algebro-topological approach to interpreting the fusion product, which is usually defined using conformal blocks or moduli spaces of $G$-bundles on Riemann surfaces (cf. \cite{Be}, \cite{BL} and \cite{V}). Moreover, the Freed-Hopkins-Teleman Theorem also provides the framework for a formulation of geometric quantization of q-Hamiltonian spaces (cf. \cite{M2} and \cite{M3}).

Motivated by the index theory of real elliptic operators, Atiyah introduced $KR$-theory in \cite{At}. It is a version of topological $K$-theory for the category of Real spaces, i.e. topological spaces with involutions (see Appendix for definition and basic properties), and used by Atiyah to derive the 8-periodicity of $KO$-theory from the 2-periodicity of complex $K$-theory. $KR$-theory can be regarded as a unifying thread of $KO$-theory, complex $K$-theory and $\mathit{KSC}$-theory (cf. \cite{At}). In recent years there is a rekindled interest in $KR$-theory, motivated by its applications in mathematical physics, as it classifies the D-brane charges in orientifold string theory (see e.g. \cite{DFM} and \cite{DMR}) and provides invariants for topological phases and finer classification of topological insulators (\cite{FM}). It is due to these fascinating developments and the possible applications in those areas in mathematical physics that 
we find it is of interest to obtain a generalization of FHT in the context of $KR$-theory. For simplicity, we will only work on the case where $G$ is simple and simply-connected and the twists in $KR$-theory are ungraded. Note that FHT deals with more general compact Lie groups and $\mathbb{Z}_2$-graded twists. 

Our main result shows that, by incorporating a group anti-involution of $G$ (i.e. composition of group inversion and an automorphic involution) and applying a structure theorem for twisted $KR$-homology (theory), we have that the corresponding equivariant twisted $KR$-homology of $G$ is essentially a module over the equivariant $KR$-homology coefficient ring, generated by the irreducible positive energy representations of real, complex and quaternionic types (cf. Corollary \ref{realFHTstrthm}). Moreover, the ring structure of the equivariant twisted $KR$-homology induced by a certain modified Pontryagin product, when restricted to those generators of positive energy representations, is precisely the fusion product. In particular, the degree zero piece of the equivariant Real twisted $KR$-homology of $G$ gives the Real Verlinde algebra, the Grothendieck group of the isomorphism classes of Real positive energy representations of the Real loop group $LG$, where the involution is induced by the Lie group involution on $G$ and reflection on the loop. This is the content of Theorem \ref{mainthm0}. In short, the group anti-involution as the additional Real structure in the equivariant twisted $KR$-homology respects the algebra structure of the Verlinde algebra. We also describe the ring structure of the equivariant twisted $KR$-homology of $G$ as the quotient of the $KR$-homology coefficient ring by what we call the Real Verlinde ideal, in parallel to the description of the Verlinde algebra as the quotient of the representation ring by the Verlinde ideal (cf. Theorem \ref{mainthm}).

The organization of this paper is as follows. In Section \ref{twistedkhomo}, we review equivariant $K$-homology and incorporate such additional structures as involutions and twists (i.e. local coefficients) to define equivariant twisted $KR$-theory (homology), which are necessary for the generalization of FHT in the Real context.  We define Real Dixmier-Douady bundles which are used to realize the twists of $KR$-theory (homology) and the Real third equivariant cohomology group which is shown to classify the twists (real twists, their classification and twisted $KR$-theory were studied in the groupoid context in \cite{Mo} and \cite{Mo2}, but the Real third cohomology group used there is not the same as ours. See Remark \ref{diffmo}). We show that when the involution is trivial, this Real third equivariant cohomology group classifies the real twists for $KO$-theory, but with extra information analoguous to what the Frobenious-Schur indicator gives (Example \ref{FrobSchur}). We also develop the notion of Real $\text{Spin}^c$ structures, which are the $KR$-theoretic orientation, and which we utilize to conclude that $KH$-theory (Quaternionic $K$-theory, see Definition \ref{QuatK}) can be viewed as twisted $KR$-theory.

Section \ref{eqfundDD} concerns the computation of the group of the equivariant Real Dixmier-Douady classes of $G$ equipped with an anti-involution, and the construction of an equivariant Real fundamental Dixmier-Douady bundle (cf. Definition \ref{eqrealfundDD}) following the idea in \cite{M1}. We find that equipping $G$ with an anti-involution rather than an automorphic involution enable it to support an equivariant Real fundamental Dixmier-Douady bundle, making it the appropriate candidate for formulating the Real FHT. In Section \ref{realFHT} we put together the preliminary material in previous sections, define the modified Pontryagin product on the equivariant twisted Real $K$-homology of $G$ and prove the main results Corollary \ref{realFHTstrthm}, Theorems \ref{mainthm0} and \ref{mainthm}. The Appendix reviews the definitions and basic properties of $KR$-theory and Real representation rings.

\textbf{Acknowledgment}. The author would like to thank Reyer Sjamaar for suggesting the problem of generalizing FHT in the Real context and his patient guidance and support in the course of writing this paper, Eckhard Meinrenken for his suggestions for improving the presentation of Section \ref{MorG}, El-ka\"ioum M. Moutuou for pointing out the relevance of his work \cite{Mo} and \cite{Mo2}, and Mathai Varghese for improving the exposition of the paper. He is also grateful for the referee's critical comments on the paper. 

\section{Notations, definitions and conventions}\label{notedefs}
Throughout this paper, we let $G$ be a compact, simple and simply-connected Lie group. We use $\Gamma=\{1, \gamma\}$ to denote the group of order 2, and $\sigma_X$ to denote the involutive homeomorphism induced by $\gamma$ on the Real space $X$. If $X$ is a group $G$, $\sigma_G$ always means an involutive automorphism, while $a_G$ means the corresponding anti-involution $a_G$, where $\text{inv}$ means group inversion. We sometimes suppress the notations for involution if there is no danger of confusion about the involutive homeomorphism. For example, we simply use $G$ with the understanding that it is equipped with $\sigma_G$, while we use $G^-$ to mean the group $G$ equipped with $a_G$. By abuse of notation, we denote by $G^-$ the group $G$ viewed as a $G\rtimes\Gamma$-space, where the first factor acts by conjugation while the second acts by the anti-involution $a_G$. Following the convention in \cite{At}, we use the word `Real' (with a capital R) in all contexts involving involutions, so as to avoid confusion with the word `real' with the usual meaning. For example, `Real $K$-theory' is used interchangeably with $KR$-theory, whereas `real $K$-theory' means $KO$-theory. Moreover, we do not discriminate between the meanings of the terms `Real structure' and `involution'. 

We let $T$ be a fixed choice of maximal torus, and $W$ the corresponding Weyl group. We let $\Lambda\subset\mathfrak{t}$ be the coroot lattice and $\Lambda^*\subset\mathfrak{t}^*$ the weight lattice. We fix a choice of simple roots $\{\alpha_1, \cdots, \alpha_l\}$ and the positive Weyl chamber $\mathfrak{t}^*_+\subset \mathfrak{t}^*$, and adopt the convention that $\alpha_0=-\alpha_{\text{max}}$, where $\alpha_\text{max}$ is the highest root. We let $B$ be the basic inner product on $\mathfrak{g}^*$, which is the bi-invariant inner product such that $B(\alpha_\text{max}, \alpha_\text{max})=2$. $B$ is used to identify $\mathfrak{t}$ and $\mathfrak{t}^*$. We let $B^\flat: \mathfrak{t}\to\mathfrak{t}^*$ and $B^\sharp: \mathfrak{t}^*\to \mathfrak{t}$. The dual Coxeter number, $\textsf{h}^\vee$, of $G$ is then defined to be $1+B(\rho, \alpha_\text{max})$, where $\rho$ is the half sum of positive roots. We let $\Delta^k$ be the level $k$ closed Weyl alcove of $\mathfrak{t}_+^*$, defined by the inequalities
\[\lambda(\alpha_i^\vee)+k\delta_{i, 0}\geq 0\text{ for }i=0, \cdots, l\]
with vertices labelled by $\{0, \cdots, l\}$, so that the origin is labelled 0. We use $\Delta$ to denote the ordinary closed Weyl alcove $\Delta^1$. Let $\Lambda_k^*$ be $\Delta^k\cap\Lambda^*$, the level $k$ weights. 

By abuse of notation, we also use $\Delta$ to denote $B^\sharp(\Delta)\subset \mathfrak{t}$. Let $I\subseteq\{0, \cdots, l\}$. Let $\Delta_I$ be the closed subsimplex of $\Delta$ spanned by vertices with labels from the index set $I$. Let $W_I$ be the subgroup of $W$ fixing $\Delta_I$. We also let $G_I$ be the stabilizer subgroup of $\Delta_I$ and $\Lambda_I$, the coroot lattice of $G_I$. 


If we view $R(G)$ as the ring of characters of $G$, then the level $k$ Verlinde ideal $I_k$ can be defined as the vanishing ideal of 
\[\left.\left\{\text{exp}_T\left(B^\sharp\left(\frac{\lambda+\rho}{k+\textsf{h}^\vee}\right)\right)\right|\lambda\in\Lambda_k^*\right\}\]
and the level $k$ Verlinde algebra can be alternatively defined as $R_k(G):=R(G)/I_k$ (cf. \cite{Be}). 

\section{Twisted $KR$-homology}\label{twistedkhomo}
This Section is devoted to background material on (analytic) $K$-homology, its equivariant, Real and twisted version, the classification of (Real) twists (which are realized by Dixmier-Douady bundles in this paper), and the notion of Real $\text{Spin}^c$ structures. We refer the reader to Appendix or \cite{At}, \cite{AS} and \cite{F} for the definitions and basic properties of $KR$-theory, and \cite{Mo} and \cite{Mo2} for Real twists of groupoids, their classification (which is different from ours, see Remark \ref{diffmo}) and the construction of twisted $KR$-theory in the context of groupoids. For (non-equivariant) twisted $KR$-homology defined using bundle gerbes and geometric cycles, see \cite{HMSV}.

\subsection{(Real) $K$-homology}
$K$-homology is a homology theory dual to $K$-theory through the $K$-theory version of Poincar\'e duality, where a manifold is oriented in $K$-theory if it has a $\text{Spin}^c$ structure. Inspired by the Atiyah-Singer index theorem, which he used to realize the $K$-theoretic Poincar\'e duality, Kasparov gave the first definition of $K$-homology (cf. \cite{Kas}). In this Section and the next, we follow Kasparov's definition, give a quick review of $K$-homology and introduce its Real and twisted version. Most of the materials in this Section are directly taken from \cite{BHS}, \cite{M1} and \cite{HR}, to the latter of which we refer the reader for more details on the subject.

\begin{definition}
	Let $\textsf{A}$ be a separable $\mathbb{Z}_2$-graded $G-C^*$-algebra. A (zero-graded) \emph{Fredholm module} over $\textsf{A}$ is a triple $(\rho, \mathcal{H}, F)$ where
	\begin{enumerate}
		\item $\mathcal{H}=\mathcal{H}^+\oplus\mathcal{H}^-$ is a separable $\mathbb{Z}_2$-graded $G$-Hilbert space, 
		\item $\rho: \textsf{A}\to B(\mathcal{H})$ is a representation of $\textsf{A}$ by bounded linear operators on $\mathcal{H}$ which preserves the grading and is $G$-equivariant, and
		\item $F$ is an odd graded $G$-equivariant operator on $\mathcal{H}$ such that
		\[(F^2-1)\rho(a)\sim 0, (F-F^*)\rho(a)\sim 0, [F, \rho(a)]\sim 0\]
		for all $a\in\textsf{A}$, where $\sim$ means equality modulo compact operators.
	\end{enumerate}
	A $q$-graded Fredholm module is one equipped with odd-graded skew adjoint operators $\varepsilon_1, \cdots, \varepsilon_q$ on $\mathcal{H}$, called the grading operators, such that
	\[\varepsilon_i^2=-\text{Id}, \ \ \varepsilon_i\varepsilon_j=-\varepsilon_j\varepsilon_i\text{ for }i\neq j\]
	and such that they commute with $F$ and $\rho(a)$ for all $a\in \textsf{A}$.
\end{definition}
\begin{definition}
	\begin{enumerate}
		\item We say the two Fredholm modules $(\rho, \mathcal{H}, F)$ and $(\rho', \mathcal{H}', F')$ are \emph{unitarily equivalent} if there is a degree zero unitary isomorphism $U: \mathcal{H}'\to \mathcal{H}$ such that 
		\[(\rho', \mathcal{H}', F')=(U^*\rho U, \mathcal{H}', U^*FU)\]
		Two $q$-graded Fredholm modules are unitarily equivalent if there exists a unitary isomorphism as above which in addition intertwines with the grading operators $\varepsilon_i$ and $\varepsilon'_i$.
		\item Two Fredholm modules $(\rho, H, F_0)$ and $(\rho, H, F_1)$ are \emph{operator homotopic} if there exists a norm continuous function $t\mapsto F_t$ for $t\in[0, 1]$ such that $(\rho, \mathcal{H}, F_t)$ is a Fredholm module for $t\in [0, 1]$.
	\end{enumerate}
\end{definition}
\begin{definition}[Kasparov's $K$-homology]\label{defkhom} The equivariant $K$-homology group of a $\mathbb{Z}_2$-graded $G-C^*$-algebra $\textsf{A}$, $K_G^0(\textsf{A})$, is the quotient of the abelian group with one generator $[x]$ for each unitary equivalent class of Fredholm modules over $A$ by the following relations
	\begin{enumerate}
		\item if $x_0$ and $x_1$ are operator homotopic then $[x_0]=[x_1]$ in $K^0_G(\textsf{A})$, and
		\item $[x_0\oplus x_1]=[x_0]+[x_1].$
	\end{enumerate}
	We define $K^{-q}_G(\textsf{A})$ for $q\geq 0$ similarly using $q$-graded Fredholm modules, and for $q<0$ using Bott periodicity. If $\textsf{A}$ is an ungraded $G-C^*$-algebra, its $K$-homology is defined to be the one where $\textsf{A}$ is the even part and 0 the odd part. 
\end{definition}

\begin{remark}
	One may, by regarding the grading operators as the canonical generators of the (complex) Clifford algebra, equivalently define $K_G^{-q}(\textsf{A})$ to be $K_G^0(\textsf{A}\widehat{\otimes}\mathbb{C}\text{l}_q)$.
\end{remark}
	Like $K$-theory, $K$-homology is also 2-periodic (cf. \cite[Proposition 8.2.13]{HR}). The use of superscripts to denote the grading of $K$-homology of $G$-$C^*$-algebras is due to the fact that the $K$-homology functor on the category of $G$-$C^*$-algebras is contravariant. For if $(\rho, \mathcal{H}, F)$ is a Fredholm module over $\textsf{A}$, and $\alpha: \textsf{A}'\to\textsf{A}$ is a $G$-$C^*$-algebra homomorphism, then $(\rho\circ\alpha, \mathcal{H}, F)$ is a Fredholm module over $\textsf{A}'$. 
	
\begin{definition}
	The equivariant $K$-homology of a locally compact $G$-space $X$ is defined to be the equivariant $K$-homology of the $G-C^*$-algebra of space of continuous functions on $X$ vanishing at infinity.
		\[K_q^G(X):=K_G^{-q}(C_0(X)).\]
\end{definition}
The $K$-homology functor on the category of topological spaces is covariant, as opposed to the contravariance of its counterpart on the category of $G-C^*$-algebras, hence the use of subscripts to denote the grading. 

\begin{example}
	Let $X=\text{pt}$ equipped with the trivial $G$-action. Then $C(X)$ is isomorphic to $\mathbb{C}$ with trivial $G$-action. There is a unique representation $\rho$ of $\mathbb{C}$ by bounded linear operators on the $G$-Hilbert space $\mathcal{H}$, namely the scalar multiplication. It follows that the odd-graded operator $F$ in any Fredholm module over $C(X)$ is a $G$-Fredholm operator on $\mathcal{H}$ by definition. The $K$-homology class of any given Fredholm module is then determined by the $G$-Fredholm index of $F$, which is a (finite-dimensional) representation of $G$. Hence $K_0^G(\text{pt})\cong R(G)$. It can also be shown easily that $K_{1}^G(\text{pt})=0$.
\end{example}

\begin{definition}
	Recall that $\text{Spin}^c(n)=\text{Spin}(n)\times_{\mathbb{Z}_2}U(1)$, where $\mathbb{Z}_2$ acts on the central circle $U(1)$ by multiplication by $-1$. Let $V$ be a rank $n$ orientable Euclidean $G$-vector bundle over $M$. It is $G$-$\text{Spin}^c$ if there exists, over $M$, an equivariant $G$-principal $\text{Spin}^c(n)$-bundle $\widetilde{P}$ whose structure group lifts that of the  oriented frame bundle $P$ $G$-equivariantly. Equivalently, $V$ is $G$-$\text{Spin}^c$ if $\bigwedge^*V\otimes_\mathbb{R}\mathbb{C}$ is $G$-equivariantly isomorphic to $\widetilde{P}\times_{\text{Spin(n)}^c}\mathbb{C}\text{l}_n$.
\end{definition}
\begin{example}\label{fundclass}
	Let $M$ be an $n$-dimensional oriented Riemannian $G$-manifold. It is $G$-$\text{Spin}^c$ if $TM$ is. Let $S:=\bigwedge^*TM\otimes_\mathbb{R}\mathbb{C}=\widetilde{P}\times_{\text{Spin(n)}^c}\mathbb{C}\text{l}_n$ be the spinor bundle, which comes equipped with the Clifford multiplication of $\mathbb{C}\text{l}(TM)$ on the left and the Clifford multiplication of $\mathbb{C}\text{l}_n$ on the right. The \emph{fundamental class} $[M]\in K^G_n(M)$ corresponding to a given $G$-$\text{Spin}^c$ structure of $M$ is given by the $n$-graded Fredholm module $(\rho, \mathcal{H}, F)$, where
	\begin{enumerate}
		\item $\mathcal{H}=L^2(M, S)$, graded by the even and odd degree parts of $S$, and the $n$ grading operators are right multiplication by the $n$ generators by $\mathbb{C}\text{l}_n$,
		\item $\rho$ is the representation of $C_0(M)$ on $\mathcal{H}$ by left multiplication, and 
		\item $F$ is the Dirac operator $D$ suitably normalized so that it becomes a bounded operator.
	\end{enumerate}
	The Poincar\'e duality pairing between $K$-homology and $K$-theory is realized by the index pairing, which produces the Fredholm index of a certain operator on a Hilbert space constructed out of the Fredholm module and the vector bundle representing the relevant $K$-homology and $K$-theory classes respectively. For instance, for an oriented $G$-$\text{Spin}^c$ Riemannian manifold $M$ of dimension $n$, the index pairing of $[M]$ and the $K$-class $[E]\in K_G^0(M)$ of a $G$- vector bundle $E$ gives the (analytic) $G$-index of the twisted Dirac operator $D_E$ on $S\otimes E$ (cf. \cite[Lemma 11.4.1]{HR}). This pairing with the fundamental class is tantamount to the wrong way map in $K$-theory induced by the map collapsing $M$ to a point. The $K$-theoretic Poincar\'e duality asserts that 
	\begin{align*}
		K^*_G(M)&\to K_{n-*}^G(M)\\
		[E]&\mapsto [(\rho, L^2(M, S\otimes E), D_E)]
	\end{align*}
	is an isomorphism. 
\end{example}
	
	As one would expect, equivariant $KR$-homology can be defined by adding natural Real structures throughout the definition of equivariant $K$-homology (cf. \cite[Appendix B]{HR} for a brief discussion on the set-up of $KR$-homology). For the convenience of the reader, we shall spell out the definitions of $KR$-homology. 
	\begin{definition}
		$\textsf{A}$ is a separable $\mathbb{Z}_2$-graded Real $G-C^*$-algebra if it is a graded $G-C^*$-algebra with an anti-linear involution $\sigma_\textsf{A}$ of degree zero such that 
		\begin{eqnarray}\label{calgcomp}
			\sigma_\textsf{A}(g\cdot a)=\sigma_G(g)\cdot\sigma_\textsf{A}(a)
		\end{eqnarray} 
		where $\sigma_G$ is a group involution on $G$. A Real Fredholm module over $\textsf{A}$ is one where $\mathcal{H}$ is equipped with an anti-linear zero-graded involution $\sigma_\mathcal{H}$ satisfying a compatibility relation with the $G$-action similar to Equation (\ref{calgcomp}), $\rho$ intertwines with $\sigma_\textsf{A}$ and $\sigma_\mathcal{H}$, and $F$ commutes with $\sigma_\mathcal{H}$. A $(p, q)$-graded Real Fredholm module satisfies an additional condition that the $p+q$ grading operators commute with $\sigma_\mathcal{H}$ and satisfy
		\[\varepsilon_i^2=\begin{cases}-\text{Id}\ \ &\text{ if }1\leq i\leq q\\ \text{Id}\ \ &\text{ if }q+1\leq i\leq p+q\end{cases}\text{ and }\varepsilon_i\varepsilon_j=-\varepsilon_j\varepsilon_i\text{ for }i\neq j\]
		We simply call a $(0, q)$-graded Real Fredholm module $q$-graded Real Fredholm module. The notions of unitary equivalence and operator homotopy of Real Fredholm modules, and hence equivariant $KR$-homology $KR^{p, q}_G(\textsf{A})$ (resp. $KR^{-q}_G(\textsf{A})$), can be defined straightforwardly by requiring that $\sigma_\mathcal{H}$ respect various structures in Definition \ref{defkhom} and using $(p, q)$-graded Real Fredholm modules (resp. $q$-graded Real Fredholm modules). Define, for a Real locally compact $G$-space $X$, its equivariant $KR$-homology $KR^G_{q, p}(X)$ (resp. $KR^G_q(X)$) to be $KR^{p, q}_G(C_0(X))$ (resp. $KR^{-q}_G(C_0(X))$).
	\end{definition}
	We shall point out notable changes brought by the addition of Real structures. First, the $KR$-homology is (1, 1)-periodic if double grading is used, and 8-periodic if single grading is used. Repeated application of $(1, 1)$-periodicity yields $KR^{p, q}_G(\textsf{A})\cong KR^{p-q}_G(\textsf{A})$. Second, in defining the Poincar\'e duality pairing between $KR$-homology and $KR$-theory, parallel to that between $K$-homology and $K$-theory, we shall need the notion of a Real $\text{Spin}^c$ structure as the $KR$-theoretic orientation. As we are unable to find a precise definition of Real $\text{Spin}^c$ structure in the literature, we shall hereby give one. 
	
	Let $V\to X$ be a $\Gamma$-equivariant, orientable real vector bundle of rank $n$ (caveat: this should be distinguished from Real vector bundles). Equip each fiber with an inner product such that $\sigma_V$ is orthogonal. Let $P$ be the oriented orthonormal frame bundle of $V$. $P$ has an $\mathit{SO}(n)$-action defined by 
	\[g\cdot(v_1, \cdots, v_{2n})=\left(\sum_{j=1}^{2n}g_{1, j}v_j, \cdots, \sum_{j=1}^{2n}g_{2n, j}v_j\right),\]
	making it a principal $\mathit{SO}(n)$-bundle. We make the structure group $\mathit{SO}(n)$ a Real Lie group by assigning the involutive automorphism
	\[g\mapsto \begin{pmatrix}I_q&\\ &-I_p\end{pmatrix}g\begin{pmatrix}I_q&\\ &-I_p\end{pmatrix}\]
	where $p+q=n$. We also provide $\text{Spin}^c(n)$ with the involutive automorphism which descends to the one on $\mathit{SO}(n)$ defined above and restricts to complex conjugation on the central circle. 
\begin{definition}\label{Realbundle}
		$P$ is a Real principal $\mathit{SO}(n)$-bundle of type $(p, q)$ if $P$ is a principal $SO(n)$-bundle equipped with the bundle involution
	\begin{eqnarray}\label{Realframe}\sigma_P(v_1, v_2, \cdots, v_{n})=(\sigma_V(v_1), \sigma_V(v_2), \cdots, \sigma_V(v_q), -\sigma_V(v_{q+1}), \cdots, -\sigma_V(v_{n})).\end{eqnarray}
\end{definition}
	Note that another way of saying Definition \ref{Realbundle} is that the (unoriented) frame bundle of $V$ with Real structure given by Equation (\ref{Realframe}) is the trivial double cover of $P$ in the Real sense\footnote{We cannot resist to point out that we thought of using the phrase `Really trivial double cover' but quickly realized that it was really a bad pun.}(a trivial double cover is trivial in the Real sense if the involution preserves the two connected components). Recall that an ordinary real vector bundle is orientable if its (unoriented) frame bundle is the trivial double cover of the oriented frame bundle. This prompts the following
	\begin{definition}
		Suppose $V$ is orientable. $V$ is \emph{Real }$(p, q)$-\emph{orientable} if the oriented frame bundle $P$ is preserved by $\sigma_P$ defined above.
	\end{definition}
	$V$ is Real $(p, q)$-orientable if and only if either $\sigma_V$ is orientation-preserving and $p$ is even, or $\sigma_V$ is orientation-reversing and $p$ is odd.
	
	\begin{example}\label{divbyfour}
		Consider the real vector bundle $\mathbb{R}^{r, s}\to \text{pt}$. It is Real $(p, q)$-orientable if and only if $q$ and $s$ are of the same parity or, equivalently, $p-q-(r-s)$ is divisible by 4.
	\end{example}
	\begin{definition}\label{realspinc}
		Let $V$ be a $\Gamma$-equivariant, orientable real vector bundle over a Real space $X$. Suppose further that $V$ is Real $(p, q)$-orientable. We say $V$ is \emph{Real} $(p, q)$-$\text{Spin}^c$ if there exists, over $X$, a Real principal $\text{Spin}^c(n)$-bundle $\widetilde{P}$ of type $(p, q)$ whose structure group lifts that of $P$ as the Real $U(1)$-central extension as defined above. Equivalently, $V$ is Real $(p, q)$-$\text{Spin}^c$ if $\bigwedge^*V\otimes_\mathbb{R}\mathbb{C}$ is of the form of the Real spinor bundle $S=\widetilde{P}\times_{\text{Spin}^c(n)}\mathbb{C}\text{l}_{p, q}$, where $\mathbb{C}\text{l}_{p, q}$ is the complex Clifford algebra $\mathbb{C}\text{l}_n$ equipped with an antilinear algebra involution $\sigma$ with $\sigma(\varepsilon_i)=\varepsilon_i$ for $1\leq i\leq q$ and $\sigma(\varepsilon_j)=-\varepsilon_j$ for $q+1\leq j\leq n$. The equivariant version of Real $(p, q)$-$\text{Spin}^c$ structure can be defined by incorporating the $G$-action compatibly throughout the above definition.
	\end{definition}
	With Definition \ref{realspinc}, we can define, similar to Example \ref{fundclass}, the fundamental class $[M]\in KR^G_{q, p}(M)$ of the Real $(p, q)$-$\text{Spin}^c$ Riemannian $G$-manifold $M$, and formulate the Poincar\'e duality for $KR$-theory: 
	\[KR_G^{r, s}(M)\stackrel{\cong}{\longrightarrow} KR_{q-r, p-s}^G(M).\]
	There is one technical point to note, however. When defining the Real Fredholm module $(\rho, L^2(M, S), D)$ of type $(p, q)$ representing $[M]$, the grading operators are given by the right multiplication by $\varepsilon_1, \cdots, \varepsilon_q, i\varepsilon_{q+1}, \cdots, i\varepsilon_{n}\in\mathbb{C}\text{l}_{p, q}$ on $S$. 
\begin{example}
	Consider again the $\Gamma$-equivariant real vector bundle $\mathbb{R}^{r, s}\to\text{pt}$, which we assume is Real $(p, q)$-orientable. The Real principal $\mathit{SO}(n)$-bundle $P$ of type $(p, q)$ is isomorphic, through the map $(v_1, \cdots, v_n)\mapsto \begin{pmatrix}v_1\\ \vdots\\ v_n\end{pmatrix}$, to $\mathit{SO}(n)\to\text{pt}$ with involution being (up to a rearrangement of the orthonormal vectors $v_1, \cdots, v_n$)
	\[g\mapsto \begin{pmatrix}-I_{|p-r|}&\\ & I_{n-|p-r|}\end{pmatrix}g.\]
Though obviously $\widetilde{P}=(\text{Spin}^c(n)=\text{Spin}(n)\times_{\mathbb{Z}_2}U(1)\to\text{pt})$ is the principal $\text{Spin}^c(n)$-bundle which lifts the structure group of $P$, making $\mathbb{R}^{n}\to\text{pt}$ a $\text{Spin}^c$ vector bundle, it is not true that one can always equip $\widetilde{P}$ with a compatible Real structure which descends to the one on $P$ so that $\mathbb{R}^{r, s}$ is Real $\text{Spin}^c$. We would like to determine when it is Real $(p, q)$-$\text{Spin}^c$. Note that Real $(p, q)$-orientability of $\mathbb{R}^{r, s}$ implies that $p-r$ is even. Let $|p-r|=2k$. The two elements in $\text{Spin}(n)$ that lift $\begin{pmatrix}-I_{2k}& \\ & I_{n-2k}\end{pmatrix}$ are $\pm\varepsilon_1\varepsilon_2\cdots\varepsilon_{2k}$. So $\mathbb{R}^{r, s}$ is Real $(p, q)$-$\text{Spin}^c$ if and only if the bundle automorphism on $\widetilde{P}$ given by $[(g, z)]\mapsto [(\pm\varepsilon_1\varepsilon_2\cdots\varepsilon_{2k}g, \overline{z})]$ is an involution, which is the case exactly if and only if $k$ is even. If $k$ is odd, the above bundle automorphism only acts as a `quarter turn' and may be deemed a `Quaternionic structure', as we will see in Section \ref{twistedkrhomology}.

\begin{proposition}\label{realspincreal}	Let $p+q=r+s=n$.
	\begin{enumerate}
		\item $\mathbb{R}^{r, s}\to\text{pt}$ is not Real $(p, q)$-orientable if and only if $\frac{p-q-(r-s)}{2}$ is odd.
		\item $\mathbb{R}^{r, s}\to\text{pt}$ is Real $(p, q)$-orientable but not Real $(p, q)$-$\text{Spin}^c$ if and only if $\frac{p-q-(r-s)}{4}$ is odd.
		 \item $\mathbb{R}^{r, s}\to\text{pt}$ is Real $(p, q)$-$\text{Spin}^c$ if and only if $p-q-(r-s)$ is divisible by 8.
	 \end{enumerate}
\end{proposition}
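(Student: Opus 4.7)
The plan is to observe that all three parts of the proposition are arithmetic repackagings of facts already established in Example \ref{divbyfour} and in the worked computation immediately preceding the statement. The key identity is that, since $p+q = r+s = n$, one has
\[
p - q - (r-s) \;=\; (p+q) - 2q - (r+s) + 2s \;=\; 2(s-q) \;=\; 2(p-r),
\]
so this quantity is automatically even, and the conditions on its divisibility by $2$, $4$, $8$ translate respectively into conditions on the parity of $s-q$, divisibility of $s-q$ by $2$, and divisibility of $s-q$ by $4$.

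For part (1), I would invoke Example \ref{divbyfour} directly: $\mathbb{R}^{r,s}\to\text{pt}$ is Real $(p,q)$-orientable iff $q \equiv s \pmod 2$ iff $s-q$ is even iff $p-q-(r-s)=2(s-q)$ is divisible by $4$. Taking the negation, non-orientability is equivalent to $s-q$ being odd, i.e.\ $\tfrac{p-q-(r-s)}{2}$ being odd.

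For part (3), I would appeal to the explicit analysis carried out in the paragraph just before the proposition. There the structure group involution on $\mathit{SO}(n)$ (after rearrangement of the frame) was written as left multiplication by $\mathrm{diag}(-I_{|p-r|},\, I_{n-|p-r|})$ with $|p-r|=2k$, and it was shown that a compatible Real $\text{Spin}^c$ lift exists iff the candidate lift $\pm\varepsilon_1\cdots\varepsilon_{2k}$ squares to the identity on $\widetilde P$, which happens iff $k$ is even. Since $|p-r|=|s-q|$, this is the condition that $s-q$ is divisible by $4$, equivalently $p-q-(r-s)=2(s-q)$ is divisible by $8$.

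For part (2), I would combine (1) and (3): the orientable-but-not-$\text{Spin}^c$ case corresponds to $s-q$ even but not divisible by $4$, i.e.\ $s-q \equiv 2 \pmod 4$, which is the statement that $\tfrac{p-q-(r-s)}{4} = \tfrac{s-q}{2}$ is an odd integer. There is no real obstacle here; the only thing requiring care is bookkeeping the relations $p-r = s-q$ and $p-q-(r-s)=2(s-q)$ consistently, and observing that when $k$ is odd the failure to be Real $\text{Spin}^c$ is genuine (not merely a choice of sign), as the two candidate lifts $\pm\varepsilon_1\cdots\varepsilon_{2k}$ both square to $-1$ rather than $+1$, so the induced bundle map on $\widetilde P$ is an order-$4$ automorphism, precisely the Quaternionic phenomenon flagged in the preceding example.
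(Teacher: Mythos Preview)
Your proposal is correct and matches the paper's approach exactly: the proposition is stated in the paper as a summary of the preceding computations in the example (together with Example~\ref{divbyfour}), without a separate proof, and you have simply made explicit the arithmetic translation via $p-q-(r-s)=2(s-q)=2(p-r)$ that links those computations to the stated divisibility conditions.
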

\end{example}	
\subsection{Equivariant Real Dixmier-Douady bundles and their classification}\label{eqrealDD}
The study of local coefficient systems for $K$-theory was pioneered in \cite{DK}. Since then various models for the local coefficient systems of $K$-theory and $K$-homology have been proposed, namely, bundle gerbes (cf. \cite{BCMMS}, \cite{CCM}, \cite{CW}, \cite{Mu}, \cite{MS}), projective Hilbert space bundles (\cite{AS2}) and Dixmier-Douady bundles, fiber bundles with fibers being the $C^*$-algebra $\mathcal{K}(\mathcal{H})$, the space of compact operators on a separable complex Hilbert space, and the structure group being $\text{Aut}(\mathcal{K}(\mathcal{H}))=PU(\mathcal{H})$ (cf. \cite{M1}, \cite{M2}, \cite{M3}, \cite{R}). In this paper we shall follow the convention in \cite{M2} to twist $K$-homology by Dixmier-Douady bundles (DD bundles for short). To adapt to our context of equivariant $KR$-homology, we shall define equivariant Real DD bundles and a Real version of third equivariant cohomology group which classifies equivariant Real DD bundles (up to Morita isomorphism, to be explained below).

\begin{definition}
	Let $X$ be a locally compact Real $G$-space. A $G$-\emph{equivariant Real DD bundle} $\mathcal{A}$ is a $G$-equivariant, locally trivial $\mathcal{K}(\mathcal{H})$-bundle with structure group $\mathit{PU}(\mathcal{H})$ with the compact open topology (cf. \cite[Appendix 1]{AS}), equipped with an involution $\sigma_\mathcal{A}$ such that 
	\begin{enumerate}
		\item $(\mathcal{A}, \sigma_\mathcal{A})$ is a Real $G$-space,
		\item $\sigma_\mathcal{A}$ descends to $\sigma_X$ on $X$, and 
		\item $\sigma_\mathcal{A}$ maps fiber to fiber anti-linearly. 
	\end{enumerate}
\end{definition}
\begin{definition}
	\begin{enumerate}
		\item An equivariant Real DD bundle $\mathcal{A}$ is \emph{Morita trivial} if there exists an equivariant Real Hilbert space bundle $\mathcal{E}$ such that $\mathcal{A}$ is isomorphic to $\mathcal{K}(\mathcal{E})$. We say $\mathcal{E}$ Morita trivializes $\mathcal{A}$ if $\mathcal{A}\cong\mathcal{K}(\mathcal{E})$. 
		\item The \emph{opposite DD bundle} of $\mathcal{A}$, denoted by $\mathcal{A}^{\text{opp}}$, is the $\mathcal{K}(\mathcal{H})$-bundle with the same underlying space as that of $\mathcal{A}$ except that it is modeled on the opposite Hilbert space $\mathcal{H}^\text{opp}$ with the conjugate complex structure. 
		\item The (completed) tensor product of two equivariant Real DD bundles $\mathcal{A}_1\otimes\mathcal{A}_2$ is the equivariant Real DD bundle modeled on $\mathcal{K}(\mathcal{H}_1)\otimes\mathcal{K}(\mathcal{H}_2)\cong\mathcal{K}(\mathcal{H}_1\otimes\mathcal{H}_2)$. 
		\item Two equivariant Real DD bundles $\mathcal{A}_1$ and $\mathcal{A}_2$ are \emph{Morita isomorphic} if $\mathcal{A}_1\otimes \mathcal{A}_2^{\text{opp}}$ is isomorphic to a Morita trivial equivariant Real DD bundle. We say $\mathcal{E}$ \emph{witnesses} the Morita isomorphism between $\mathcal{A}_1$ and $\mathcal{A}_2$ if $\mathcal{E}$ Morita trivializes $\mathcal{A}_1\otimes\mathcal{A}_2^{\text{opp}}$. 
		\item Suppose $(X_i, \mathcal{A}_i)$, $i=1, 2$ are two equivariant Real DD bundles modeled on $\mathcal{K}(\mathcal{H}_i)$. A \emph{Morita morphism}
		\[(f, \mathcal{E}): (X_1, \mathcal{A}_1)\to (X_2, \mathcal{A}_2)\]
		consists of an equivariant Real proper map $f: X_1\to X_2$ and an equivariant Real Hilbert space bundle $\mathcal{E}$ on $X_1$ which is a $(f^*\mathcal{A}_2, \mathcal{A}_1)$-bimodule, locally modeled on the $(\mathcal{K}(\mathcal{H}_2), \mathcal{K}(\mathcal{H}_1))$-bimodule $\mathcal{K}(\mathcal{H}_1, \mathcal{H}_2)$, the space of compact operators mapping from $\mathcal{H}_1$ to $\mathcal{H}_2$. 
	\end{enumerate}
\end{definition}	

If $(f, \mathcal{E}_i)$, $i=1, 2$, are two Morita morphisms from $(X_1, \mathcal{A}_1)$ to $(X_2, \mathcal{A}_2)$, then the $(f^*\mathcal{A}_2, \mathcal{A}_1)$-bimodules $\mathcal{E}_1$ and $\mathcal{E}_2$ differ by an equivariant Real line bundle $L$. More precisely, 
\[\mathcal{E}_2=\mathcal{E}_1\otimes L, \text{ with }L=\text{Hom}_{f^*\mathcal{A}_2-\mathcal{A}_1}(\mathcal{E}_1, \mathcal{E}_2).\]
\begin{definition}
	We say the two Morita morphisms $(f, \mathcal{E}_i)$, $i=1, 2$, are \emph{2-isomorphic} if the equivariant Real line bundle $L$ is trivial.
\end{definition}
It follows that, if there is a Morita morphism between $(X_1, \mathcal{A}_1)$ and $(X_2, \mathcal{A}_2)$ covering the equivariant Real map $f: X_1\to X_2$, the set of 2-isomorphism classes of Morita morphisms covering $f$ is a principal homogeneous space acted upon by the equivariant Real Picard group of $X_1$. 

The category of equivariant Real DD bundles over a Real $G$-space is therefore endowed with a grouplike monoidal structure where multiplication is given by the tensor product, and group inversion the opposite bundle construction. It is well-known that, analogous to complex line bundles being classified by the second integral cohomology group up to isomorphism, DD bundles are classified, up to Morita isomorphism, by the third integral cohomology group (cf. \cite{DiDo}). In what follows we will prove an analogous result for equivariant Real DD bundles by adding one additional structure (Real and equivariant structures) at a time to ordinary DD bundles. We first consider the classification of Real DD bundles. 

For a Real space $X$, let $\mathcal{U}=\{U_\alpha\}_{\alpha\in J}$ be a $\Gamma$-covering of $X$, i.e. a covering where the $\Gamma$-action on the index set $J$ is free and $\gamma\cdot U_\alpha=U_{\gamma\cdot\alpha}$. For a Real DD bundle $\mathcal{A}$, there are transition functions $r_{\alpha\beta}: U_{\alpha\beta}\to PU(\mathcal{H})$ satisfying $r_{\alpha\beta}(x)=\overline{r_{\gamma(\alpha)\gamma(\beta)}(\sigma_X(x))}$. If $\mathcal{U}$ is fine enough, $r_{\alpha\beta}$ can be lifted to a $U(\mathcal{H})$-valued function $\widehat{r}_{\alpha\beta}$. Let $s_{\alpha\beta\gamma}=\widehat{r}_{\alpha\beta}\widehat{r}_{\beta\gamma}\widehat{r}_{\gamma\alpha}$. $\{s_{\alpha\beta\gamma}\}$ defines a $\Gamma$-equivariant $U(1)$-valued 2-cocycle in the Cech cohomology group $H^2(\check{C}(\mathcal{U}, \underline{U(1)}_\Gamma)^\Gamma)$, where $\underline{U(1)}_\Gamma$ is the $\Gamma$-sheaf of continuous $U(1)$-valued functions with $\Gamma$ acting on $U(1)$ by complex conjugation. The short exact sequence 
\[1\longrightarrow\mathbb{Z}_\Gamma\longrightarrow\underline{\mathbb{R}}_\Gamma\longrightarrow\underline{U(1)}_\Gamma\longrightarrow 1\]
where $\Gamma$ acts on $\mathbb{Z}$ and $\mathbb{R}$ by negation, and the fact that $\underline{\mathbb{R}}_\Gamma$ is a fine $\Gamma$-sheaf (i.e. it admits a $\Gamma$-equivariant partition of unity), lead to the isomorphism $H^2(\check{C}(\mathcal{U}, \underline{U(1)}_\Gamma)^\Gamma)\cong H^3(\check{C}(\mathcal{U}, \mathbb{Z}_\Gamma)^\Gamma)$ induced by the coboundary map in the long exact sequence of Cech cohomology groups. By the Corollary in Section 5.5 of \cite{Gr}, $\displaystyle\lim_{\mathcal{U}}H^3(\check{C}(\mathcal{U}, \mathbb{Z}_\Gamma)^\Gamma)$ is isomorphic to the sheaf cohomology $H^3(X; \Gamma, \mathbb{Z}_\Gamma)$ defined as the third right derived functor of the $\Gamma$-invariant global section functor. According to \cite[Section 6]{St}, because $\Gamma$ is a discrete group, $H^3(X; \Gamma, \mathbb{Z}_\Gamma)$ is isomorphic to $H^3_\Gamma(X, \mathbb{Z}_\Gamma)$, which is Borel's equivariant cohomology defined more generally as follows.

\begin{definition}
	Let $G$ be a topological group with a $\Gamma$-action. Define the Real cohomology
	\[H_\Gamma^n(X, {G}_\Gamma):=H^n(X\times_\Gamma E\Gamma, \underline{X\times E\Gamma\times G/\Gamma})\]
	where $\underline{X\times E\Gamma\times G/\Gamma}$ is the local coefficient system with fiber $G$ over $X\times_\Gamma E\Gamma$. 
\end{definition}
\begin{definition}
	The Real DD-class of $\mathcal{A}$, denoted by $DD_\mathbb{R}(\mathcal{A})$, is defined to be the image of the 2-cocycle $\{s_{\alpha\beta\gamma}\}$ in $H^3_\Gamma(X, \mathbb{Z}_\Gamma)$ under the various isomorphisms of cohomology groups discussed above, namely $H^3(\check{C}(\mathcal{U}, \mathbb{Z}_\Gamma)^\Gamma)\cong H^3(X; \Gamma, \mathbb{Z}_\Gamma)\cong H^3_\Gamma(X, \mathbb{Z}_\Gamma)$. 
\end{definition}

\begin{remark}
	\begin{enumerate}
		\item The introduction of the Real cohomology $H_\Gamma^3(X, \mathbb{Z}_\Gamma)$ as the home where the Real DD-classes live is inspired by \cite{Kah}, where Kahn introduced Real Chern classes and use $H^2_\Gamma(X, \mathbb{Z}_\Gamma)$ to classify Real line bundles over $X$. 
		\item If $(f, \mathcal{E}): (X_1, \mathcal{A}_1)\to (X_2, \mathcal{A}_2)$ is a Morita morphism, then $f^*\text{DD}_\mathbb{R}(\mathcal{A}_2)=\text{DD}_\mathbb{R}(\mathcal{A}_1)$. The map $\text{DD}_\mathbb{R}: \text{the group of Morita isomorphism classes of Real DD bundles over }X\to H^3_\Gamma(X, \mathbb{Z}_\Gamma)$ is a group homomorphism. 
	\end{enumerate}
\end{remark}

By definition, one can see that $\mathcal{A}$ is Morita trivial if and only if $\text{DD}_\mathbb{R}(\mathcal{A})=0\in H_\Gamma^3(X, \mathbb{Z}_\Gamma)$. In fact, we have
\begin{proposition}
	The Morita isomorphism classes of Real DD bundles over $X$ is classified by $H_\Gamma^3(X, \mathbb{Z}_\Gamma)$.
\end{proposition}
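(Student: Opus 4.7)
The plan is to mirror the classical Dixmier--Douady classification of ordinary DD bundles by $H^3(X,\mathbb{Z})$, with all constructions carried out $\Gamma$-equivariantly. Since $\text{DD}_\mathbb{R}$ has already been shown to be a group homomorphism, it suffices to establish its injectivity and surjectivity on Morita isomorphism classes.

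Injectivity follows directly from the observation in the text preceding the statement that $\mathcal{A}$ is Morita trivial if and only if $\text{DD}_\mathbb{R}(\mathcal{A}) = 0$. Indeed, if $\text{DD}_\mathbb{R}(\mathcal{A}_1) = \text{DD}_\mathbb{R}(\mathcal{A}_2)$, then by the homomorphism property $\text{DD}_\mathbb{R}(\mathcal{A}_1 \otimes \mathcal{A}_2^{\text{opp}}) = 0$, so $\mathcal{A}_1 \otimes \mathcal{A}_2^{\text{opp}}$ is Morita trivial, which by definition is precisely the statement that $\mathcal{A}_1$ and $\mathcal{A}_2$ are Morita isomorphic.

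For surjectivity, given a class $c \in H^3_\Gamma(X, \mathbb{Z}_\Gamma)$, I would reverse the construction that defines $\text{DD}_\mathbb{R}$. On a sufficiently fine $\Gamma$-covering $\mathcal{U} = \{U_\alpha\}$, the chain of isomorphisms invoked in the definition of $\text{DD}_\mathbb{R}$ represents $c$ by a $\Gamma$-equivariant $U(1)$-valued Cech $2$-cocycle $\{s_{\alpha\beta\gamma}\}$ satisfying the Real compatibility inherited from $\sigma_X$. Contractibility of $U(\mathcal{H})$ (Kuiper's theorem), together with a further refinement of $\mathcal{U}$ if necessary, yields $U(\mathcal{H})$-valued $1$-cochains $\widehat{r}_{\alpha\beta}$ whose Cech coboundary is $s_{\alpha\beta\gamma}\cdot\text{Id}$. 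Their projections $r_{\alpha\beta}$ to $\mathit{PU}(\mathcal{H})$ form a genuine $1$-cocycle, which one uses to glue the local trivial bundles $U_\alpha \times \mathcal{K}(\mathcal{H})$ into a DD bundle $\mathcal{A}$. Combining the fiberwise antilinear involution of $\mathcal{K}(\mathcal{H})$ (coming from complex conjugation on $\mathcal{H}$) with the $\Gamma$-action on the index set endows $\mathcal{A}$ with a Real structure, and $\text{DD}_\mathbb{R}(\mathcal{A}) = c$ by construction.

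The main obstacle will be ensuring that the $U(\mathcal{H})$-lifts $\widehat{r}_{\alpha\beta}$ can be chosen to be compatible with the Real structure on the nose rather than only up to a $U(1)$ scalar. I would arrange this by first choosing arbitrary lifts over a fundamental domain for the $\Gamma$-action on $\mathcal{U}$ and then transporting them to the remaining opens via $\sigma_X$ and the antilinear involution on $\mathcal{K}(\mathcal{H})$, so that Real compatibility of the $\widehat{r}_{\alpha\beta}$ holds by construction; any scalar discrepancy in the cocycle identity produced this way is absorbed into $s_{\alpha\beta\gamma}$, which by hypothesis is already a $\Gamma$-equivariant cocycle. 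This bookkeeping is the Real counterpart of the standard argument producing a $\mathit{PU}(\mathcal{H})$-bundle from a $3$-cohomology class, now executed in the category of $\Gamma$-sheaves; its internal consistency is precisely what is guaranteed by the isomorphism $H^2(\check{C}(\mathcal{U},\underline{U(1)}_\Gamma)^\Gamma) \cong H^3(\check{C}(\mathcal{U},\mathbb{Z}_\Gamma)^\Gamma)$ used in defining $\text{DD}_\mathbb{R}$.
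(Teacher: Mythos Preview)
Your outline is sound and follows the classical \v{C}ech-theoretic route to the ordinary Dixmier--Douady theorem, carried out $\Gamma$-equivariantly. The paper, however, proves surjectivity by a genuinely different argument: rather than lifting a \v{C}ech $2$-cocycle to $U(\mathcal{H})$-valued data directly, it shows that the functor $H^3_\Gamma(-,\mathbb{Z}_\Gamma)$ is \emph{representable} by a Real space. Using Segal's model of $EU(1)$ as the abelian group of $U(1)$-valued step functions on $[0,1]$ (and iterating the bar construction to obtain $BBU(1)$ with its inherited $\Gamma$-action), the paper derives a chain of isomorphisms
\[
H^3_\Gamma(X,\mathbb{Z}_\Gamma)\cong H^2_\Gamma(X,\underline{U(1)}_\Gamma)\cong H^1_\Gamma(X,\underline{BU(1)}_\Gamma)\cong [X,BBU(1)]_\mathbb{R},
\]
and then exhibits a $\Gamma$-equivariant homotopy equivalence $BBU(1)\simeq BPU(\mathcal{H})$ via the map $EU(1)\to U(\mathcal{H})$, $f\mapsto m_f$ (multiplication by $f$ on $L^2([0,1])$). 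Surjectivity is then immediate: a class $\alpha$ corresponds to a Real map $f\colon X\to BPU(\mathcal{H})$, and one pulls back the universal Real DD bundle $EPU(\mathcal{H})\times_{PU(\mathcal{H})}\mathcal{K}(\mathcal{H})$.

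As for the comparison: your approach stays close to the cocycle description and is more elementary in spirit, but the step where Kuiper's theorem ``yields $U(\mathcal{H})$-valued $1$-cochains whose \v{C}ech coboundary is $s_{\alpha\beta\gamma}\cdot\mathrm{Id}$'' is not an immediate consequence of contractibility of $U(\mathcal{H})$ alone --- it is precisely the surjectivity of the boundary map $H^1(X,\underline{PU(\mathcal{H})})\to H^2(X,\underline{U(1)})$, whose cleanest justification already passes through the identification $PU(\mathcal{H})\simeq K(\mathbb{Z},2)$, i.e.\ the classifying-space picture. The paper's representability argument handles this point and the Real structure in one stroke by building the $\Gamma$-action into the representing space from the outset, thereby sidestepping the fundamental-domain bookkeeping you anticipate needing.
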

\begin{proof}
	It remains to show that for any class $\alpha\in H_\Gamma^3(X, \mathbb{Z}_\Gamma)$, there exists a Real DD bundle $\mathcal{A}$ such that $DD_\mathbb{R}(\mathcal{A})=\alpha$. By a result in \cite{S3}, an incarnation for $EU(1)$ is the space of step functions taking values in $U(1)$ and constant on the intervals $(t_i, t_{i+1})$ for some partition $0=t_0<\cdots<t_{m+1}=1$ for the unit interval. This model can be made an abelian group, where the group law is simply given by pointwise multiplication. The group $U(1)$ can be regarded as the normal subgroup of $EU(1)$, which is the subspace of $U(1)$-valued functions constant on the unit interval. Hence we have the short exact sequence
	\[1\longrightarrow U(1)\longrightarrow EU(1)\longrightarrow BU(1)\longrightarrow 1.\]
	Note that there is a $\Gamma$-action on $EU(1)$ induced by the conjugation action on $U(1)$, and $BU(1)$ inherits this $\Gamma$-action as well. One may iterate Segal's construction to the abelian group $BU(1)$, and get the short exact sequence
	\[1\longrightarrow BU(1)\longrightarrow EBU(1)\longrightarrow BBU(1)\longrightarrow 1.\]
	Again the three groups in the above short exact sequence possess naturally induced $\Gamma$-action. The coboundary maps of the long exact sequences of cohomology groups induced by the the following short exact sequences of $\Gamma$-sheaves
	\begin{center}
		$0\longrightarrow\mathbb{Z}_\Gamma\longrightarrow\underline{\mathbb{R}}_\Gamma\longrightarrow\underline{U(1)}_\Gamma\longrightarrow 0$\\
		$0\longrightarrow\underline{U(1)}_\Gamma\longrightarrow\underline{EU(1)}_\Gamma\longrightarrow\underline{BU(1)}_\Gamma\longrightarrow 0$\\
		$0\longrightarrow\underline{BU(1)}_\Gamma\longrightarrow\underline{EBU(1)}_\Gamma\longrightarrow\underline{BBU(1)}_\Gamma\longrightarrow 0$\\
	\end{center}
	give rise to the following string of isomorphisms
	\begin{align*}
		H^3_\Gamma(X, \mathbb{Z}_\Gamma)
		\cong& H^2_\Gamma(X, \underline{U(1)}_\Gamma)\\
		\cong& H^1_\Gamma (X, \underline{BU(1)}_\Gamma)\\
		\cong& H^0_\Gamma (X, \underline{BBU(1)}_\Gamma)\\
		\cong& [X, BBU(1)]_\mathbb{R}
	\end{align*}
	where $[X, Y]_\mathbb{R}$ is the set of $\Gamma$-equivariant homotopy equivalence classes of Real maps from $X$ to $Y$. So $BBU(1)$ is a representing space for $H_\Gamma^3(\cdot, \mathbb{Z}_\Gamma)$. In fact, the space $BPU(\mathcal{H})$ (equipped with the $\Gamma$-action induced by conjugation on $\mathcal{H}$) can be used as a representing space in place of $BBU(1)$, because they are $\Gamma$-equivariant homotopy equivalent, which is shown below.
	
	Let $\mathcal{H}$ be the space of $L^2$-integrable complex-valued functions on the unit interval. We may define a map 
	\begin{align*}
		F: EU(1)&\to U(\mathcal{H})\\
		f&\mapsto m_f
	\end{align*}
	where $m_f$ is the multiplication by $f$ operator. $F$ is both $U(1)$- and $\Gamma$-equivariant, and a homotopy equivalence (both $EU(1)$ and $U(\mathcal{H})$ are contractible). So $F$ induces a $\Gamma$-equivariant homotopy equivalence between $BU(1)$ and $PU(\mathcal{H})$, and hence another $\Gamma$-equivariant homotopy equivalence between $BBU(1)$ and $BPU(\mathcal{H})$. 
	
	We have that, given $\alpha\in H_\Gamma^3(X, \mathbb{Z}_\Gamma)$, there exists a Real map $f: X\to BPU(\mathcal{H})$ whose $\Gamma$-equivariant homotopy class corresponds to $\alpha$. The Real DD bundle $f^*(EPU(\mathcal{H})\times_{PU(\mathcal{H})}\mathcal{K}(\mathcal{H}))$ can be easily checked to have $\alpha$ as the Real DD-class.
\end{proof}

\begin{remark}
	The proof above also shows that the Real DD bundle $EPU(\mathcal{H})\times_{PU(\mathcal{H})}\mathcal{K}(\mathcal{H})$ is a universal Real DD bundle.
\end{remark}

Let $G\rtimes \Gamma$ be the semi-direct product such that conjugation by $\gamma$ on $G$ amounts to the automorphic involution $\sigma_G$. One can define the equivariant version of the Real cohomology similarly by Borel's construction. 
\begin{definition}
	Let $X$ be a Real $G$-space. Define the equivariant Real cohomology $H^n_{G\rtimes\Gamma}(X, \mathbb{Z}_\Gamma)$ to be $H^n((X\times_G EG)\times_\Gamma E\Gamma, \underline{(X\times_G EG)\times E\Gamma\times\mathbb{Z}/\Gamma})$. Here $\gamma$ acts on $\mathbb{Z}$ by negation.
\end{definition}
Atiyah and Segal showed in \cite{AS2} that the equivariant cohomology $H_G^3(X, \mathbb{Z})$ classifies the equivariant DD bundles up to Morita isomorphism. Adapting their arguments to the Real context, one can show that 
\begin{proposition}
	The Morita isomorphism classes of equivariant Real DD bundles on the Real $G$-space $X$ is classified by $H_{G\rtimes\Gamma}^3(X, \mathbb{Z}_\Gamma)$.
\end{proposition}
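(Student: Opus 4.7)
The plan is to adapt the Atiyah--Segal classification of equivariant DD bundles by $H^3_G(X,\mathbb{Z})$ to the Real setting, using the non-equivariant Real classification already established as a black box. The idea is to pass to a mixed Borel construction in order to reduce the equivariant Real problem to the purely Real one: a $G\rtimes\Gamma$-equivariant Real DD bundle on $X$ should correspond, up to Morita isomorphism, to an ordinary Real DD bundle on the mixed Borel construction $X_{G,\Gamma}:=(X\times_G EG)\times_\Gamma E\Gamma$, whose Real DD-classes by definition live in $H^3_\Gamma(X_{G,\Gamma},\mathbb{Z}_\Gamma)=H^3_{G\rtimes\Gamma}(X,\mathbb{Z}_\Gamma)$.

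First, I would construct the equivariant Real DD-class map. Given a $G\rtimes\Gamma$-equivariant Real DD bundle $\mathcal{A}\to X$, pull it back along the projection $X\times E(G\rtimes\Gamma)\to X$ and then descend along the free $G\rtimes\Gamma$-action to a Real DD bundle on $X_{G,\Gamma}$; its non-equivariant Real DD-class, which has already been shown to be a well-defined Morita invariant, provides the sought-after $\mathrm{DD}_{\mathbb{R},G\rtimes\Gamma}(\mathcal{A})\in H^3_{G\rtimes\Gamma}(X,\mathbb{Z}_\Gamma)$. Multiplicativity under tensor product follows from the corresponding property in the non-equivariant case.

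For surjectivity I would start from the universal Real DD bundle $\mathcal{U}=EPU(\mathcal{H})\times_{PU(\mathcal{H})}\mathcal{K}(\mathcal{H})\to BPU(\mathcal{H})$ with its canonical $\Gamma$-action by conjugation, and represent a class $\alpha\in H^3_{G\rtimes\Gamma}(X,\mathbb{Z}_\Gamma)$ by a $G\rtimes\Gamma$-equivariant Real map $f\colon X\times E(G\rtimes\Gamma)\to BPU(\mathcal{H})$ (with $G$ acting trivially on the target); the pullback $f^*\mathcal{U}$ is then a $G\rtimes\Gamma$-equivariant Real DD bundle on $X\times E(G\rtimes\Gamma)$ which descends to the desired DD bundle on $X$ realizing $\alpha$.

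The main obstacle is verifying that the descent/pullback pair induces a bijection on Morita isomorphism classes despite $E(G\rtimes\Gamma)$ being infinite-dimensional. Following Atiyah--Segal, I would filter $E(G\rtimes\Gamma)$ by its finite-dimensional $G\rtimes\Gamma$-CW skeleta $\{E_n\}$, carry out at each finite stage the Real-equivariant analogue of their obstruction-theoretic argument---substituting Real line bundles, $\underline{U(1)}_\Gamma$, and the $\Gamma$-equivariant homotopy equivalence $BBU(1)\simeq BPU(\mathcal{H})$ from the preceding proof for their non-Real counterparts---and pass to the limit using vanishing of $\lim^1$ for the Real equivariant Picard groups of the tower $\{X\times E_n\}$. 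This yields the desired bijection between Morita isomorphism classes of $G\rtimes\Gamma$-equivariant Real DD bundles on $X$ and classes in $H^3_{G\rtimes\Gamma}(X,\mathbb{Z}_\Gamma)$.
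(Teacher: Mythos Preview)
Your proposal is correct and follows exactly the route the paper indicates: the paper's own proof is merely the one-line remark that Atiyah and Segal's classification of equivariant DD bundles by $H^3_G(X,\mathbb{Z})$ in \cite{AS2} can be adapted to the Real context, and your sketch spells out precisely that adaptation via the Borel construction, the universal Real DD bundle on $BPU(\mathcal{H})$, and the skeletal filtration argument. In fact you have supplied considerably more detail than the paper itself, which leaves the adaptation entirely to the reader.
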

The following simple observation is helpful in computing the free part of $H^*_{G\rtimes\Gamma}(X, \mathbb{Z}_\Gamma)$.

\begin{proposition}\label{antiinvcoh}
	Let $X$ be a smooth Real $G$-manifold. $H_{G\rtimes\Gamma}^n(X, \mathbb{R}_\Gamma)$ is isomorphic to $H^n_G(X, \mathbb{R})^{-\Gamma}$, the $\Gamma$-anti-invariant subgroup of $H^n_G(X, \mathbb{R})$ ($\alpha\in H^n_G(X, \mathbb{R})=H^n(X\times_G EG, \mathbb{R})$ is said to be $\Gamma$-anti-invariant if $\gamma^*\alpha=-\alpha$, where $\gamma^*$ is the pullback map induced by the involutions on $X$ and $G$). 
\end{proposition}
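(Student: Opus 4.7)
The plan is to compute $H^n_{G\rtimes\Gamma}(X,\mathbb{R}_\Gamma)$ via the Serre spectral sequence of the Borel fibration for the residual $\Gamma$-action, and then exploit the fact that the order of $\Gamma$ is invertible in $\mathbb{R}$ to collapse everything into the invariants of a single row.

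Write $Y := X\times_G EG$, which inherits a $\Gamma$-action from the action of $\gamma$ on $X$ and on $G$ (via $\sigma_X$ and $\sigma_G$). By definition,
\[
H^n_{G\rtimes\Gamma}(X,\mathbb{R}_\Gamma) \;=\; H^n\bigl(Y\times_\Gamma E\Gamma,\; \underline{Y\times E\Gamma\times\mathbb{R}/\Gamma}\bigr),
\]
which is the Borel $\Gamma$-equivariant cohomology of $Y$ with coefficients in the $\Gamma$-module $\mathbb{R}_\Gamma$ (on which $\gamma$ acts by $-1$). The first step is to run the Leray-Serre spectral sequence of the fibration $Y \hookrightarrow Y\times_\Gamma E\Gamma \to B\Gamma$, whose $E_2$-page is
\[
E_2^{p,q} \;=\; H^p\bigl(\Gamma,\; H^q(Y,\mathbb{R})\otimes \mathbb{R}_\Gamma\bigr) \;\Longrightarrow\; H^{p+q}_\Gamma(Y,\mathbb{R}_\Gamma),
\]
where the $\Gamma$-module structure on $H^q(Y,\mathbb{R})\otimes\mathbb{R}_\Gamma$ is the diagonal one: $\gamma$ acts on $H^q(Y,\mathbb{R})$ by the pullback $\gamma^*$ induced by the involution on $Y$, and on $\mathbb{R}_\Gamma$ by $-1$.

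Next, because $\lvert\Gamma\rvert=2$ is a unit in $\mathbb{R}$, for any $\mathbb{R}[\Gamma]$-module $M$ one has $H^p(\Gamma,M)=0$ for $p>0$ and $H^0(\Gamma,M)=M^\Gamma$. Hence the spectral sequence collapses onto the $p=0$ column and gives
\[
H^n_\Gamma(Y,\mathbb{R}_\Gamma) \;\cong\; \bigl(H^n(Y,\mathbb{R})\otimes \mathbb{R}_\Gamma\bigr)^\Gamma.
\]
The final step is to unpack this invariant: an element $\alpha\otimes 1$ is fixed by $\gamma$ if and only if $\gamma^*\alpha\otimes(-1) = \alpha\otimes 1$, i.e.\ $\gamma^*\alpha = -\alpha$. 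Therefore $(H^n(Y,\mathbb{R})\otimes\mathbb{R}_\Gamma)^\Gamma = H^n(Y,\mathbb{R})^{-\Gamma}$. Since $H^n(Y,\mathbb{R}) = H^n_G(X,\mathbb{R})$ by definition of equivariant cohomology, the desired isomorphism follows.

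I expect the main subtlety to be a bookkeeping issue rather than a genuine obstacle: one must verify that the $\Gamma$-module structure on the $E_2$-page really is the diagonal one described above, i.e.\ that the local coefficient system $\underline{Y\times E\Gamma\times \mathbb{R}/\Gamma}$ contributes the sign on the coefficients while the topological $\Gamma$-action on $Y$ contributes the $\gamma^*$ on $H^*(Y,\mathbb{R})$. Once the two sources of $\Gamma$-action are correctly separated, the anti-invariance comes out automatically from requiring the two to cancel. Smoothness of $X$ is not strictly used in this argument; it is presumably invoked so that $H^*_G(X,\mathbb{R})$ may be computed via equivariant de Rham theory in applications.
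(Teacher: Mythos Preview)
Your proof is correct and follows essentially the same route as the paper's: both run the Serre spectral sequence for the fibration $Y\hookrightarrow Y\times_\Gamma E\Gamma\to B\Gamma$ and observe that it collapses onto the column $p=0$, identifying $E_2^{0,n}$ with the anti-invariants. The only cosmetic difference is in how the vanishing of $E_2^{p,q}$ for $p>0$ is argued: you invoke the standard fact that $H^p(\Gamma,M)=0$ for $p>0$ whenever $|\Gamma|$ is invertible in the coefficient ring, whereas the paper phrases the $E_2$-term as $H^p(B\Gamma,\underline{\,\cdot\,})$ and deduces the vanishing from the Gysin sequence of $E\Gamma\to B\Gamma$ with real coefficients (together with $H^*(B\Gamma,\mathbb{R})\cong\mathbb{R}$). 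Your observation that smoothness plays no role in the argument is also accurate.
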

\begin{proof}
	Viewing the space $(X\times_G EG)\times_\Gamma E\Gamma$ as a fiber bundle over $B\Gamma$ with $X\times_G EG$ as the fiber, we have that the $E_2$-page of the Serre spectral sequence for the cohomology group $H^n((X\times_G EG)\times_\Gamma E\Gamma, \underline{(X\times_G EG)\times_\Gamma E\Gamma\times\mathbb{R}/\Gamma})$ is 
	\[E_2^{p, q}=H^p(B\Gamma, \underline{E\Gamma\times H^q(X\times_G EG, \mathbb{R})/\Gamma})\]
	where $\gamma$ acts on $H^q(X\times_G EG, \mathbb{R})$ by $-\gamma^*$ (the minus sign comes from the negation action on $\mathbb{R}$ by $\gamma$). From the Gysin sequence (\ref{gysinseq}) in Example \ref{twotorsiontwist} below with the integral coefficient $\mathbb{Z}$ replaced by the real coefficient $\mathbb{R}$, and the vanishing of $H^0(B\Gamma, \underline{E\Gamma\times_\Gamma\mathbb{R}})$ because of the non-existence of nonzero global section of local coefficient system $\underline{E\Gamma\times_\Gamma\mathbb{R}}$, we observe that $H^p(B\Gamma, \underline{E\Gamma\times_\Gamma\mathbb{R}})=0$ for $p\geq 0$\footnote{That $H^p(B\Gamma, \underline{E\Gamma\times_\Gamma\mathbb{R}})=0$ for $p>0$ can be seen as follows. Consider the aforementioned Gysin sequence
	\begin{eqnarray*}
	\label{gysinseq}\cdots\longrightarrow H^{p-1}(E\Gamma, \mathbb{R})\stackrel{\int}{\longrightarrow}H^{p-1}(B\Gamma, \mathbb{R})\stackrel{\cup e}{\longrightarrow} H^p(B\Gamma, \underline{E\Gamma\times_\Gamma\mathbb{R}})\stackrel{\pi^*}{\longrightarrow} H^p(E\Gamma, \mathbb{R})\longrightarrow\cdots
	\end{eqnarray*}
	and the following two cases.
	\begin{enumerate}
		\item When $p=1$, the map $\int$ is an isomorphism and $H^1(E\Gamma, \mathbb{R})=0$ as $E\Gamma$ is contractible, so $H^1(B\Gamma, \underline{E\Gamma\times_\Gamma\mathbb{R}})=0$.
		\item When $p>1$, both $H^{p-1}(B\Gamma, \mathbb{R})$ and $H^p(E\Gamma, \mathbb{R})$ are 0, so $H^p(B\Gamma, \underline{E\Gamma\times_\Gamma\mathbb{R}})=0$ as well. 
	\end{enumerate}}. This, together with the fact that $H^*(B\Gamma, \mathbb{R})\cong\mathbb{R}$, implies that $E_2^{p, q}$ vanishes for $p>0$ and is $H^0(B\Gamma, \underline{E\Gamma\times H^q(X\times_G EG, \mathbb{R})/\Gamma})=H^q(X\times_G EG, \mathbb{R})^{-\Gamma}$ for $p=0$, and that the spectral sequence collapses on the $E_2$-page. This completes the proof.
\end{proof}

\begin{example}\label{twotorsiontwist}
	We shall show that $H_\Gamma^3(\text{pt}, \mathbb{Z}_\Gamma)\cong\mathbb{Z}_2$ and exhibit the DD bundle whose DD class is the nontrivial 2-torsion. Consider the $S^0$-bundle $E\Gamma\to B\Gamma$. Being non-orientable, this bundle has Euler class $e$ which lives in the first cohomology group $H^1(B\Gamma, \underline{E\Gamma\times_\Gamma\mathbb{Z}})$ with twisted coefficient system. Applying the Gysin sequence
	\begin{eqnarray}
	\label{gysinseq}\cdots\longrightarrow H^2(E\Gamma, \mathbb{Z})\stackrel{\int}{\longrightarrow}H^2(B\Gamma, \mathbb{Z})\stackrel{\cup e}{\longrightarrow} H^3(B\Gamma, \underline{E\Gamma\times_\Gamma\mathbb{Z}})\stackrel{\pi^*}{\longrightarrow} H^3(E\Gamma, \mathbb{Z})\longrightarrow\cdots
	\end{eqnarray}
	and by its exactness we have that $H^3(B\Gamma, \underline{E\Gamma\times_\Gamma\mathbb{Z}})\cong H^2(B\Gamma, \mathbb{Z})\cong\mathbb{Z}_2$. Thus, there is at least a copy of $\mathbb{Z}_2$ as a summand in $H^3_{G\rtimes\Gamma}(X, \mathbb{Z}_\Gamma)$ which is the image of the split-injective pullback map $H^3_\Gamma(\text{pt}, \mathbb{Z}_\Gamma)\to H^3_{G\rtimes\Gamma}(X, \mathbb{Z}_\Gamma)$. The nonzero 2-torsion of $H^3_\Gamma(\text{pt}, \mathbb{Z}_\Gamma)$ is the Real DD class of the $\mathcal{K}(\mathcal{H})$-bundle over a point where $\mathcal{H}=l^2(\mathbb{N})$ and the involution on $\mathcal{K}(\mathcal{H})$ is induced by the `quaternionic quarter turn' on $\mathcal{H}$, i.e. $(z_1, z_2, z_3, z_4, \cdots)\mapsto (-\overline{z_2}, \overline{z_1}, -\overline{z_4}, \overline{z_3}, \cdots)$. In Section \ref{twistedkrhomology} we give another interpretation of this non-zero 2-torsion class as the obstruction for the $\Gamma$-equivariant real vector bundle $\mathbb{R}^{r, s}\to \text{pt}$ to possess a Real $(p, q)$-$\text{Spin}^c$ structure or, equivalently, the $KR$-theory orientation, when $\frac{p-q-(r-s)}{4}$ is odd. 
\end{example}

\begin{example}\label{FrobSchur}
	Let $X$ be a $G$-space with trivial Real structure. The equivariant Real cohomology group $H^3_{G\times \Gamma}(X, \mathbb{Z}_\Gamma)$ is, by K\"unneth formula, isomorphic to 
	\[\bigoplus_{i=0}^3 H_G^i(X, \mathbb{Z})\otimes H^{3-i}(B\Gamma, \underline{E\Gamma\times_\Gamma\mathbb{Z}})\oplus \bigoplus_{i=0}^4\text{Tor}_1(H_G^i(X, \mathbb{Z}), H^{4-i}(B\Gamma, \underline{E\Gamma\times_\Gamma\mathbb{Z}})).\]
	The Gysin sequence (\ref{gysinseq}) shows that $H^k(B\Gamma, E\Gamma\times_\Gamma\mathbb{Z})$ is 0 when $k$ is even, and $\mathbb{Z}_2$ when $k$ is odd. Moreover, by the Universal Coefficient Theorem, $H_G^1(X, \mathbb{Z})\cong \text{Hom}(H_1(X\times_G EG, \mathbb{Z}), \mathbb{Z})$, which is a free $\mathbb{Z}$-module. So the term $\text{Tor}_1(H_G^1(X, \mathbb{Z}), \mathbb{Z}_2)$ vanishes, and we can rewrite the isomorphism as
	\[H_{G\times\Gamma}^3(X, \mathbb{Z}_\Gamma)\cong\pi_0(X)\otimes\mathbb{Z}_2\oplus H_G^2(X, \mathbb{Z})\otimes\mathbb{Z}_2\oplus \text{Tor}_1(H_G^3(X, \mathbb{Z}), \mathbb{Z}_2).\]
	By another version of the Universal Coefficient Theorem, $H_G^2(X, \mathbb{Z})\otimes\mathbb{Z}_2\oplus\text{Tor}_1(H_G^3(X, \mathbb{Z}), \mathbb{Z}_2)$ is isomorphic to $H_G^2(X, \mathbb{Z}_2)$. We can finally put the above isomorphism as 
	\[H_{G\times\Gamma}^3(X, \mathbb{Z}_\Gamma)\cong \pi_0(X)\otimes\mathbb{Z}_2\oplus H_G^2(X, \mathbb{Z}_2).\]
	The cohomology group $H_G^2(X, \mathbb{Z}_2)$ is known to classify equivariant real DD-bundles, which twist equivariant $KO$-theory. The above isomorphism shows that the group $H_{G\times\Gamma}^3(X, \mathbb{Z}_\Gamma)$ classifies equivariant real twists, as is expected from the assumption of the triviality of the Real structure, and more. The extra piece $\pi_0(X)\otimes\mathbb{Z}_2$ serves as the `Frobenius-Schur indicator' which specifies on each connected component of $X$ the type of involution on the equivariant Real DD-bundle, where the real type comes from the complex conjugation on the complex Hilbert space $\mathcal{H}$ and the quaternionic type comes from the `quaternionic quarter turn' on $\mathcal{H}$ as in Example \ref{twotorsiontwist}. 
\end{example}

\begin{remark}\label{diffmo}
	Using a \v{C}ech construction, Moutuou in \cite{Mo2} defined an equivariant cohomology group $HR^2(X, \mathcal{T})$, where $\mathcal{T}$ is the sheaf of continuous $S^1$-value functions on $X$ with the $\Gamma$-action induced by the involution on $X$ and complex conjugation on $S^1$, and used it to classify the Real twists of groupoids, which in turn is used to construct twisted $KR$-theory of groupoids in \cite{Mo}. As noted in \cite[Theorem 2.5]{R2}, Moutuou's $HR^*$ group is not the same as Grothendieck's equivariant sheaf cohomology group or equivalently Borel's equivariant cohomology group we use in this paper. For instance, $HR^*(\text{pt}, \mathcal{T})\cong H^*(\text{pt}, \mathbb{Z}_2)\cong\mathbb{Z}_2$ whereas $H^k_\Gamma(\text{pt}, \mathbb{Z}_\Gamma)\cong\mathbb{Z}_2$ if $k$ is odd. Borel's equivariant cohomology group $H_\Gamma^3(\text{pt}, \mathbb{Z}_\Gamma)$ can detect the nontrivial Real twist over a point (i.e. the Real DD bundle exhibited in Example \ref{twotorsiontwist}), while $HR^2(\text{pt}, \mathcal{T})$ cannot.
\end{remark}

\subsection{Twisted $K$-theory (homology)}\label{twistedkrhomology}

Twisted $K$-theory, or $K$-theory with local coefficient systems, was first studied in \cite{DK}, where the case of local coefficient systems with torsion DD-classes was explored. The general case was taken up in \cite{R}, where Rosenberg defined twisted $K$-theory as homotopy equivalence classes of sections of a twisted bundle of Fredholm operators, with twisting data given by the local coefficient system. In recent years there have been extensive works done on twisted $K$-theory and its various models (cf. \cite{AS2}, \cite{BCMMS}, \cite{CW}, \cite{Kar} and the references therein) due to its connection with string theory. It has been conjectured that $D$-branes and Ramond-Ramond field strength are classified by twisted $K$-theory, where the twist is defined by a $B$-field. An impetus to the whole enterprise of studying this mathematical physical connection is a deep result by the Freed-Hopkins-Teleman, which is explained in more details in Section \ref{realFHT}.  

In what follows, we shall use the following definition of twisted $KR$-theory (homology) obtained by incorporating the Real structures into the analytic definition of twisted $K$-theory and homology, as in \cite{M1}.

\begin{definition}
	For a Real $G$-space $X$ with an equivariant Real DD bundle $\mathcal{A}$,  we define the \emph{twisted $KR$-homology}
	\[KR_q^G(X, \mathcal{A}):=KR_G^{-q}(S_0(\mathcal{A}))\]
	where $S_0(\mathcal{A})$ is the Real $G-C^*$-algebra of space of sections of $\mathcal{A}$ vanishing at infinity. Similarly, we define the twisted $KR$-theory
	\[KR_G^q(X, \mathcal{A}):=KR^G_{-q}(S_0(\mathcal{A})).\]
\end{definition} 
We list some useful features of twisted equivariant (Real) $K$-homology, adapted from \cite{M1}. 
\begin{enumerate}
	\item\label{canonicalmorita} A Morita morphism $(f, \mathcal{E}): (X_1, \mathcal{A}_1)\to (X_2, \mathcal{A}_2)$ induces a pushforward map 
	\[f_*: KR^G_*(X_1, \mathcal{A}_1)\to KR^G_*(X_2, \mathcal{A}_2)\]
	which depends only on the 2-isomorphism class of $(f, \mathcal{E})$. In particular, the push forward map $(\text{Id}, (X\times\mathcal{H})\otimes L)_*$ induces an automorphism on $KR^G_*(X, \mathcal{A})$, which depends on the isomorphism class of the equivariant Real line bundle $L$. In other words, the equivariant Real Picard group of $X$ maps to the automorphism group of $KR^G_*(X, \mathcal{A})$. If the equivariant Real Picard group of $X_1$, which is $H^2_{G\rtimes\Gamma}(X_1, \mathbb{Z}_\Gamma)$ by straightforwardly adapting the arguments in Section \ref{eqrealDD}, is trivial, then there is only one canonical pushforward map independent of $\mathcal{E}$. From now on, we will, for brevity, sometimes use $f_*$ to denote the pushforward map if it is independent of $\mathcal{E}$. 
	\item \label{crossedprod}One can define the cross product, which is a special case of the Kasparov product (cf. \cite{HR})
	\[KR_*^G(X_1, \mathcal{A}_1)\otimes KR_*^G(X_2, \mathcal{A}_2)\to KR_*^G(X_1\times X_2, \mathcal{A}_1\otimes\mathcal{A}_2)\]
	where $\mathcal{A}_1\otimes \mathcal{A}_2$ is the external tensor product. 
	\item Recall that one of the motivations for introducing local coefficient systems to generalized cohomology theory is to formulate Thom isomorphism and Poincar\'e duality for spaces which are non-orientable in the sense of the relevant cohomology theory. For an even rank (resp. odd rank) $G$-equivariant real vector bundle $V\to X$, which is orientable in the usual sense but not necessarily $G$-$\text{Spin}^c$ (i.e. equivariant $K$-theoretic orientable), the $K$-theoretic local coefficient system reflecting the obstruction for $V$ to be $G$-$\text{Spin}^c$ can be realized by the Clifford bundle $\mathbb{C}\text{l}(V)$ (resp. $\mathbb{C}\text{l}(V\oplus\underline{\mathbb{R}})$), which is a matrix algebra bundle (and hence a DD bundle) with its DD-class being 2-torsion. This is because $V$ is $G$-$\text{Spin}^c$ if and only if its Clifford bundle is Morita trivial (and trivialized by the reduced spinor bundle). We will call $\mathbb{C}\text{l}(V)$ (resp. $\mathbb{C}\text{l}(V\oplus\underline{\mathbb{R}})$) and any other Morita isomorphic DD bundles the orientation twist of $V$ and denote it by $o_V$. The Thom isomorphism now can be formulated as
	\[K_G^*(X, \mathcal{A}\otimes o_V)\cong K_G^{*+n}(V, \pi^*\mathcal{A})\]
	(cf. \cite[Theorems 3.5, 3.9]{CW} for the nonequivariant case and \cite[Section 5.6]{Kar} for the equivariant case) whereas the Poincar\'e duality is
	\[K_{n-*}^G(X, \mathcal{A})\cong K_G^{*}(X, \mathcal{A}^{\text{opp}}\otimes o_{TX})\]
	(cf. \cite[Theorem 2.1]{Tu}). The corresponding statement of Thom isomorphism and Poincar\'e duality in the Real case are completely analogous, just that we need to treat the degree shift carefully. Let $V$ and $TX$ be Real $(p, q)$-orientable Euclidean $G$-vector bundles. Then the Thom isomorphism should be
	\[KR_G^{r+p, s+q}(X, \mathcal{A}\otimes o_V)\cong KR_G^{r, s}(V, \pi^*\mathcal{A})\]
	while the Poincar\'e duality is
	\[KR^{r, s}_G(X, \mathcal{A})\cong KR^G_{q-r, p-s}(X, \mathcal{A}^{\text{opp}}\otimes o_{TX}).\]
\end{enumerate}	
\begin{example}
	By Proposition \ref{realspincreal}, $\mathbb{R}^{r, s}\to\text{pt}$ is Real $(p, q)$-$\text{Spin}^c$ if $p-q-(r-s)$ is divisible by 8. The Thom isomorphism together with $(1, 1)$-periodicity gives
	\[KR^{i+p-j-q}_G(X)\cong KR_G^{i+p, j+q}(X)\cong KR^{i, j}_G(X\times\mathbb{R}^{r, s})\cong KR_G^{i+r, j+s}(X)\cong KR_G^{i+r-j-s}(X)\]
	which is consistent with the 8-periodicity of $KR$-theory. On the other hand, if $p-q-(r-s)$ is only divisible by 4 but not 8, then $\mathbb{R}^{r, s}\to\text{pt}$ is only Real $(p, q)$-orientable but not Real $(p, q)$-$\text{Spin}^c$. The DD class of $o_{\mathbb{R}^{r, s}}$ is exactly the non-zero two torsion of the image of the split-injective map $H^3_\Gamma(\text{pt}, \mathbb{Z}_\Gamma)\to H_{G\rtimes\Gamma}^3(\text{pt}, \mathbb{Z}_\Gamma)$. In particular, if $r=0$, $s=4$, $p=q=2$, the twisted version of Thom isomorphism yields 
	\begin{proposition}\label{twistedrealquat}
		$KR_G^*(X, \pi^*(o_{\mathbb{R}^{0, 4}}))\cong KR_G^*(X\times\mathbb{R}^{0, 4})$, which in turn is isomorphic to the Quaternionic $K$-theory $KH_G^*(X)$ by Proposition \ref{shiftbyfour}. 
	\end{proposition}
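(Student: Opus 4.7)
The plan is to apply the Thom isomorphism for twisted $KR$-theory stated in item (3) above to the trivial Real vector bundle $V:=X\times\mathbb{R}^{0,4}\to X$, and then identify the result with $KH_G^*(X)$ via the cited Proposition \ref{shiftbyfour}.

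First I would verify the hypothesis needed to use the Thom isomorphism with the parameters $(p,q)=(2,2)$. By Proposition \ref{realspincreal}, the bundle $\mathbb{R}^{0,4}\to\mathrm{pt}$ is Real $(2,2)$-orientable since $p-q-(r-s)=0-(0-4)=4$ is divisible by $2$, but it is not Real $(2,2)$-$\mathrm{Spin}^c$ since $(p-q-(r-s))/4=1$ is odd. Pulling back along $X\to\mathrm{pt}$ equips $V\to X$ with a Real $(2,2)$-orientation, and its orientation twist is $o_V=\pi^*(o_{\mathbb{R}^{0,4}})$, with $\pi:X\to\mathrm{pt}$.

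Next I would feed $V$ and the trivial twist $\mathcal{A}$ into the Real Thom isomorphism from item (3):
\[
KR_G^{r+2,s+2}(X,o_V)\cong KR_G^{r,s}(V,\pi^*\mathcal{A})=KR_G^{r,s}(X\times\mathbb{R}^{0,4}).
\]
Since $(1,1)$-periodicity gives $KR_G^{r+2,s+2}\cong KR_G^{r+1,s+1}\cong KR_G^{r,s}$, passing to single grading yields
\[
KR_G^{*}\bigl(X,\pi^*(o_{\mathbb{R}^{0,4}})\bigr)\cong KR_G^{*}(X\times\mathbb{R}^{0,4}),
\]
which is the first asserted isomorphism.

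Finally, the second isomorphism $KR_G^{*}(X\times\mathbb{R}^{0,4})\cong KH_G^{*}(X)$ is a direct invocation of Proposition \ref{shiftbyfour} in the Appendix, which realises Quaternionic $K$-theory as the $(0,4)$-suspension of $KR$-theory (the standard manifestation of the $KR/KH$ shift, with the $\mathbb{Z}_2$-action on $\mathbb{R}^4$ being $-\mathrm{Id}$ providing precisely the quaternionic Real structure). The main technical check is really only bookkeeping: confirming that the parameters $(p,q)=(2,2)$ satisfy the orientability but not $\mathrm{Spin}^c$ hypothesis so that $o_V$ is meaningful and nontrivial, and that the degree shifts in the Thom isomorphism cancel under $(1,1)$-periodicity; I expect no substantive obstacle beyond this verification.
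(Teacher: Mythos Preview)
Your approach is correct and is exactly what the paper does: the proposition is stated there as an immediate consequence of the twisted Thom isomorphism with the specific choice $r=0$, $s=4$, $p=q=2$, followed by an invocation of Proposition~\ref{shiftbyfour}. Two small slips worth cleaning up: the orientability criterion from Proposition~\ref{realspincreal} is that $p-q-(r-s)$ be divisible by $4$, not $2$ (your conclusion is unaffected since $4$ is divisible by $4$); and in the paper's convention $\mathbb{R}^{0,4}$ carries the \emph{trivial} involution (the negation acts on the last $p$ coordinates of $\mathbb{R}^{p,q}$), so your parenthetical about the $\mathbb{Z}_2$-action being $-\mathrm{Id}$ is off, though this does not enter the actual argument since you only cite Proposition~\ref{shiftbyfour}.
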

	
	So Quaternionic $K$-theory is Real $K$-theory twisted by $o_{\mathbb{R}^{0, 4}}$. 
\end{example}

\section{The equivariant fundamental DD bundle over $G^-$}\label{eqfundDD}
\subsection{The group of Morita isomorphism classes of equivariant Real DD bundles over $G^-$}\label{MorG}
In this section, we shall compute $H^3_{G\rtimes\Gamma}(G^-, \mathbb{Z}_\Gamma)$, the group of Morita isomorphism classes of $G$-equivariant Real DD-bundles over $G^-$. It is well-known that $H_G^3(G, \mathbb{R})\cong\mathbb{R}$ and the integral generator is represented by the equivariant differential form
\[\eta_G(\xi)=\frac{1}{12}B(\theta^L, [\theta^L, \theta^L])-\frac{1}{2}B(\theta^L+\theta^R, \xi)\]
where $\theta^L$ and $\theta^R$ are the left and right Maurer-Cartan forms respectively, and $\xi\in\mathfrak{g}$ (cf. \cite[Section 3]{M2}). 

\begin{lemma}\label{invgamma}
	$H^3_G(G, \mathbb{R})$ is $\Gamma$-anti-invariant. 
\end{lemma}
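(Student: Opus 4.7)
The plan is to work in the Cartan model of $G$-equivariant de Rham cohomology, where $H^3_G(G,\mathbb{R}) \cong \mathbb{R}$ is represented by the explicit generator $\eta_G$ displayed above. Since this group is one-dimensional, the induced $\Gamma$-action must be multiplication by $\pm 1$, so it suffices to verify directly that $\gamma^*\eta_G = -\eta_G$ as equivariant differential forms. The Cartan-model incarnation of $\gamma^*$ reads $(\gamma\cdot \omega)(\xi) = a_G^*\,\omega(\sigma_G\xi)$; the replacement $\xi \mapsto \sigma_G\xi$ on the polynomial variable is forced by the compatibility $a_G(ghg^{-1}) = \sigma_G(g)\,a_G(h)\,\sigma_G(g)^{-1}$ between the anti-involution on $G^-$ and the conjugation action, together with $\sigma_G^{-1}=\sigma_G$.

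The key computational input is the transformation of the Maurer--Cartan forms under $a_G = \sigma_G\circ\mathrm{inv}$. Standard identities give $\mathrm{inv}^*\theta^L = -\theta^R$ and $\mathrm{inv}^*\theta^R = -\theta^L$, while $\sigma_G$ being a Lie-group automorphism yields $\sigma_G^*\theta^L = \sigma_G\cdot\theta^L$ (with $\sigma_G$ on the right denoting its differential at the identity, and analogously for $\theta^R$). Composing,
\[
a_G^*\theta^L \;=\; -\sigma_G\cdot\theta^R, \qquad a_G^*\theta^R \;=\; -\sigma_G\cdot\theta^L.
\]
I will also use two invariance properties. First, because $\mathfrak{g}$ is simple the basic inner product is unique up to scale, so $B$ is $\sigma_G$-invariant: $B(\sigma_G X,\sigma_G Y)=B(X,Y)$; in particular $\sigma_G$ commutes with the bracket. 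Second, the standard identity $B(\theta^L,[\theta^L,\theta^L]) = B(\theta^R,[\theta^R,\theta^R])$ follows from $\theta^R=\mathrm{Ad}(g)\theta^L$ together with $\mathrm{Ad}$-invariance of $B$.

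With these ingredients the verification is a short term-by-term computation. The Chern--Simons piece $\tfrac{1}{12}B(\theta^L,[\theta^L,\theta^L])$ pulls back to $\tfrac{1}{12}B(-\sigma_G\theta^R,[-\sigma_G\theta^R,-\sigma_G\theta^R])$; three factors of $-\sigma_G$ combined with $\sigma_G$-invariance of $B$ and the bracket contribute an overall sign of $-1$, and the above $L$--$R$ identity then converts the result back to $-\tfrac{1}{12}B(\theta^L,[\theta^L,\theta^L])$. For the moment-map piece $-\tfrac{1}{2}B(\theta^L+\theta^R,\xi)$, after evaluating at $\sigma_G\xi$ and applying $a_G^*$ one obtains $-\tfrac{1}{2}B(-\sigma_G(\theta^L+\theta^R),\sigma_G\xi)$; a single cancellation of the two $\sigma_G$'s through $B$ produces exactly $+\tfrac{1}{2}B(\theta^L+\theta^R,\xi)$, again the negative of the original. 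Summing, $\gamma^*\eta_G = -\eta_G$, which establishes the claim.

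There is no substantive obstacle: the whole argument reduces to keeping careful track of signs and of the Cartan-model $\Gamma$-action. The only mildly subtle point is the simultaneous twist of the form part by $a_G^*$ and of the polynomial part by $\sigma_G$; once this is fixed, everything follows from $\mathrm{Ad}$- and $\sigma_G$-invariance of $B$ and the two-line computation of $a_G^*\theta^L$ and $a_G^*\theta^R$.
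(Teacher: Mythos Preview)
Your proposal is correct and follows essentially the same approach as the paper: both arguments verify directly that the explicit Cartan-model generator $\eta_G$ satisfies $\gamma^*\eta_G=-\eta_G$ by computing $a_G^*\theta^L=-\sigma_G\theta^R$, $a_G^*\theta^R=-\sigma_G\theta^L$, and then using $\sigma_G$- and $\mathrm{Ad}$-invariance of $B$ together with $B(\theta^L,[\theta^L,\theta^L])=B(\theta^R,[\theta^R,\theta^R])$. Your write-up is in fact slightly more explicit than the paper's in justifying the Cartan-model form of the $\Gamma$-action and the $\sigma_G$-invariance of $B$.
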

\begin{proof}
	It suffices to show that $\eta_G$ is $\Gamma$-anti-invariant by Proposition \ref{antiinvcoh}. Note that
\begin{align*}
	-(\gamma^*\eta_G)(\xi)&=-(a_G)^*(\eta_G(\sigma_G(\xi)))\\
						&=-\frac{1}{12}B((a_G)^*\theta^L, [(a_G)^*\theta^L, (a_G)^*\theta^L])+\frac{1}{2}B((a_G)^*(\theta^L+\theta^R), \sigma_G(\xi))\\
	                                         &=-\frac{1}{12}B(-\sigma_G^*\theta^R, \sigma_G^*[\theta^R, \theta^R])+\frac{1}{2}B(-\sigma_G^*(\theta^L+\theta^R), \sigma_G(\xi))\\
	                                         &=\frac{1}{12}B(\theta^R, [\theta^R, \theta^R])-\frac{1}{2}B(\theta^L+\theta^R, \xi)\\
	                                         &=\eta_G(\xi)
\end{align*}
The result follows.
\end{proof}
By Lemma \ref{antiinvcoh}, $H^3_{G\rtimes\Gamma}(G^-, \mathbb{R}_\Gamma)\cong H^3_G(G^-, \mathbb{R})^{-\Gamma}\cong\mathbb{R}$. It follows that $H^3_{G\rtimes\Gamma}(G^-, \mathbb{Z}_\Gamma)$ is of rank 1 and its free part is generated by $[\eta_G]$. On the other hand, there must be a $\mathbb{Z}_2$ summand in $H^3_{G\rtimes\Gamma}(G^-, \mathbb{Z}_\Gamma)$ by the discussion in Example \ref{twotorsiontwist}. The non-zero 2-torsion is $\text{DD}_\mathbb{R}(\pi^*o_{\mathbb{R}^{0, 4}})$. Thus we have that $H^3_{G\rtimes\Gamma}(G^-, \mathbb{Z}_\Gamma)$ contains the subgroup generated by $[\eta_G]$ and $\text{DD}_\mathbb{R}(\pi^*o_{\mathbb{R}^{0, 4}})$. In fact, it is all that this equivariant Real cohomology group contains.
\begin{proposition}\label{RealcohLie}
	$H^3_{G\rtimes\Gamma}(G^-, \mathbb{Z}_\Gamma)\cong \mathbb{Z}[\eta_G]\oplus \mathbb{Z}_2\text{DD}_\mathbb{R}(\pi^*o_{\mathbb{R}^{0, 4}})$. 
\end{proposition}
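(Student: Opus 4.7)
The plan is to compute $H^3_{G\rtimes\Gamma}(G^-, \mathbb{Z}_\Gamma)$ via the Serre spectral sequence of the fibration $(G^-\times_G EG)\times_\Gamma E\Gamma \to B\Gamma$ whose $E_2$-page reads $E_2^{p,q} = H^p(B\Gamma, \underline{E\Gamma \times H^q_G(G, \mathbb{Z})/\Gamma})$, with $\gamma$ acting on the coefficients as $-\gamma^*$ (the $\Gamma$-action on equivariant cohomology composed with negation on $\mathbb{Z}$). First I would record the low-degree integral equivariant cohomology of $G$: the Serre spectral sequence of $G \hookrightarrow G\times_G EG \to BG$, together with the well-known vanishing $H^1(G,\mathbb{Z}) = H^2(G,\mathbb{Z}) = 0$ and $H^k(BG,\mathbb{Z}) = 0$ for $k = 1,2,3$ (recall $G$ is simply-connected and simple), collapses in total degree $\leq 3$ and yields $H^q_G(G,\mathbb{Z}) = 0$ for $q = 1,2$, while $H^0_G(G,\mathbb{Z}) = \mathbb{Z}$ and $H^3_G(G,\mathbb{Z}) = \mathbb{Z}$, the latter generated by an integral lift of $[\eta_G]$. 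Lemma \ref{invgamma} gives $\gamma^*[\eta_G] = -[\eta_G]$, so the composite action $-\gamma^*$ is \emph{trivial} on $H^3_G(G,\mathbb{Z})$, whereas it acts by negation on $H^0_G(G,\mathbb{Z})$, making the latter the nontrivial $\Gamma$-module $\mathbb{Z}_\Gamma$.

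Using that $H^k(B\Gamma, \mathbb{Z})$ is $\mathbb{Z}$, $0$, $\mathbb{Z}_2$ in degrees $0$, odd, even positive respectively, and that $H^k(B\Gamma, \mathbb{Z}_\Gamma)$ is $\mathbb{Z}_2$ in odd degrees and $0$ in even degrees (via the Gysin sequence (\ref{gysinseq})), the only nonzero $E_2^{p,q}$ with $p+q = 3$ are $E_2^{0,3} = \mathbb{Z}$ and $E_2^{3,0} = \mathbb{Z}_2$. All relevant differentials vanish: every $d_r$ out of $E_r^{0,3}$ lands in $E_r^{r,4-r}$ for $r \geq 2$, and $E_2^{2,2}$, $E_2^{3,1}$ are killed respectively by $H^2_G(G,\mathbb{Z}) = 0$ and $H^1_G(G,\mathbb{Z}) = 0$, while $E_2^{4,0} = H^4(B\Gamma, \mathbb{Z}_\Gamma) = 0$ takes care of $d_4$; similarly no nonzero differential reaches $E_r^{3,0}$ since the only candidate $E_2^{1,1}$ vanishes. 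Hence the spectral sequence collapses at $E_2$ in total degree $3$ and produces a short exact sequence
\[0 \longrightarrow \mathbb{Z}_2 \longrightarrow H^3_{G\rtimes\Gamma}(G^-, \mathbb{Z}_\Gamma) \longrightarrow \mathbb{Z} \longrightarrow 0,\]
which automatically splits since $\mathbb{Z}$ is free.

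It remains to identify the two summands. Under the change-of-coefficients map $H^3_{G\rtimes\Gamma}(G^-, \mathbb{Z}_\Gamma) \to H^3_{G\rtimes\Gamma}(G^-, \mathbb{R}_\Gamma) \cong \mathbb{R}\cdot[\eta_G]$ (the last isomorphism by Proposition \ref{antiinvcoh} combined with Lemma \ref{invgamma}), the free summand maps onto $\mathbb{Z}\cdot[\eta_G]$, so the free generator is the integral lift $[\eta_G]$. The torsion summand is exactly the image of the pullback $H^3_\Gamma(\text{pt}, \mathbb{Z}_\Gamma) \to H^3_{G\rtimes\Gamma}(G^-, \mathbb{Z}_\Gamma)$ along $\pi\colon G^- \to \text{pt}$, which by Example \ref{twotorsiontwist} is generated by $\text{DD}_\mathbb{R}(\pi^* o_{\mathbb{R}^{0,4}})$. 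The main technical nuance I anticipate is the bookkeeping of signs in the $\Gamma$-action on the $E_2$-page, in particular ensuring that the anti-invariance of $[\eta_G]$ cancels against the negation on $\mathbb{Z}_\Gamma$ to give a trivial action on the fiber $H^3_G(G,\mathbb{Z})$, since without this cancellation $E_2^{0,3}$ would vanish and the free part would be lost.
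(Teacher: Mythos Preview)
Your argument is correct and follows essentially the same route as the paper: both use the Serre spectral sequence of the fibration over $B\Gamma$ with fiber $G^-\times_G EG$, identify the only nonzero $E_2$-terms in total degree $3$ as $E_2^{0,3}\cong\mathbb{Z}$ and $E_2^{3,0}\cong\mathbb{Z}_2$, and then pin down the generators as $[\eta_G]$ and $\text{DD}_\mathbb{R}(\pi^*o_{\mathbb{R}^{0,4}})$. The only difference is in the endgame: you explicitly kill each differential and observe that the resulting extension $0\to\mathbb{Z}_2\to H^3\to\mathbb{Z}\to 0$ splits, whereas the paper argues more loosely that the spectral sequence bounds $H^3$ from above by an extension of subquotients of $\mathbb{Z}_2$ and $\mathbb{Z}$, and then invokes the already-established lower bound $\mathbb{Z}[\eta_G]\oplus\mathbb{Z}_2\,\text{DD}_\mathbb{R}(\pi^*o_{\mathbb{R}^{0,4}})\subseteq H^3_{G\rtimes\Gamma}(G^-,\mathbb{Z}_\Gamma)$ to force equality. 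Your version is slightly more self-contained; the paper's squeeze argument is a touch quicker since it avoids tracking the individual $d_r$.
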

\begin{proof}
	By definition, 
	\[H_{G\rtimes\Gamma}^3(G^-, \mathbb{Z}_\Gamma)=H^3(((G^-\times EG)/G\times E\Gamma)/\Gamma, \underline{((G^-\times EG)/G\times E\Gamma\times\mathbb{Z})/\Gamma}).\]
	Applying Serre spectral sequence to the fiber bundle $(G^-\times EG)/G\hookrightarrow ((G^-\times EG)/G\times E\Gamma)/\Gamma\to B\Gamma$, we have that the $E_2$-page is 
	\[E_2^{p, q}=H^p(B\Gamma, H^q_G(G^-, \mathbb{Z})\times_\Gamma E\Gamma)\]
	Note that
	\begin{align*}
		E_2^{0, 3}&=H^0(B\Gamma, \mathbb{Z})\cong\mathbb{Z}\ \ (H^3_G(G, \mathbb{Z})\cong\mathbb{Z}\text{ is invariant under the $\Gamma$-action by Lemma \ref{invgamma}})\\
		E_2^{1, 2}&=E_2^{2, 1}=0\text{ as }H_G^i(G, \mathbb{Z})=0\text{ for }i=1, 2\\
		E_2^{3, 0}&=H_\Gamma^3(\text{pt}, \mathbb{Z}_\Gamma)\cong\mathbb{Z}_2\ \ (\text{Note that }\Gamma\text{ acts on }H_G^0(G^-, \mathbb{Z})\cong\mathbb{Z}\text{ by negation})
	\end{align*}
	The convergence of the spectral sequence implies that $H_{G\rtimes\Gamma}^3(G^-, \mathbb{Z}_\Gamma)$ is a certain extension of subquotients of $\mathbb{Z}_2$ by $\mathbb{Z}$. But from the discussion preceding this Proposition we have that $H^3_{G\rtimes\Gamma}(G^-, \mathbb{Z}_\Gamma)$ contains $\mathbb{Z}\oplus \mathbb{Z}_2$ as a subgroup. We conclude that indeed $H_{G\rtimes\Gamma}^3(G^-, \mathbb{Z}_\Gamma)\cong\mathbb{Z}\oplus\mathbb{Z}_2$. 
\end{proof}
\begin{definition}\label{eqrealfundDD}
	Any equivariant Real DD bundles over $G^-$ whose equivariant Real DD class is $[-\eta_G]$ is called an equivariant Real fundamental DD bundle.
\end{definition}

\begin{remark}\label{infiniteorderDD}
		Using the ideas in this Section, one can show that $H^3_{G\rtimes\Gamma}(G, \mathbb{Z}_\Gamma)\cong\mathbb{Z}_2\text{DD}_\mathbb{R}(\pi^*o_{\mathbb{R}^{0, 4}})$ if $\Gamma$ acts on $G$ by an involutive automorphism. In this case the equivariant twisted $KR$-homology of $G$, $KR^G_*(G, \pi^*(o_{\mathbb{R}^{0, 4}}))$ is isomorphic to $KR^G_{*-4}(G)\cong KH^G_*(G)$ by Proposition \ref{twistedrealquat}. We refer the reader to \cite{F} for a complete description of the algebra structure of $KR^*_G(G)$ (and hence $KR^G_*(G)$ by Poincar\'e duality and $KH^G_*(G)$ by shifting degree by $-4$).
\end{remark}

\subsection{A distinguished maximal torus with respect to $\sigma_G$}\label{distingtor} 
	In contructing the equivariant Real fundamental DD-bundle over $G$, which is done in Section \ref{equivrealfundDD}, we need to work with a particular kind of maximal torus associated with $\sigma_G$. In this Section we record the results about this maximal torus, directly taken from \cite{OS}. 
	
Let $\mathfrak{g}=\mathfrak{k}\oplus\mathfrak{q}$, where $\mathfrak{k}$ and $\mathfrak{q}$ are the $\pm 1$ eigenspaces of $\sigma_{\mathfrak{g}}$ respectively. Let $\mathfrak{a}$ be the maximal abelian subspace of $\mathfrak{q}$, $\mathfrak{t}$ a choice of maximal abelian subalgebra of $\mathfrak{g}$ containing $\mathfrak{a}$. Let $\mathfrak{k}'$ be the centralizer of $\mathfrak{a}$ in $\mathfrak{k}$, and $\mathfrak{t}'=\mathfrak{t}\cap\mathfrak{k}'$. It is known that $\mathfrak{t}=\mathfrak{t}'\oplus\mathfrak{a}$ (c.f. \cite{OS}, Appendix B) and $\sigma_G$ respects this decomposition. Let $K'=\text{exp}_G\mathfrak{k}'$, $T'=\text{exp}_G\mathfrak{t}'$ and $T=\text{exp}_G\mathfrak{t}$. $T$ is the maximal torus we will use from now on. Note that $T'$ is a maximal torus of $K'$. Let $W'$ be the Weyl group of $K'$ with respect to $T'$. Let $w_0'$ be the longest element in $W'$. Define $\sigma_+:\mathfrak{t}^*\to\mathfrak{t}^*$ by
	\[\sigma_+(\lambda)=-\sigma_{\mathfrak{g}}(w_0'\lambda)\]
and $\sigma_+: \mathfrak{t}\to \mathfrak{t}$ similarly. Let $k_0\in N_{(K')_0}(T')$ be any representative of $w_0'$. Let $R$ be the root system of $(\mathfrak{g}, \mathfrak{t})$. We define a positive root system $R_+$ as follows. Let 
\[R'=\{\alpha\in R|\sigma_{\mathfrak{t}^*}(\alpha)=\alpha\},\ \ R^\mathfrak{a}=\{\alpha|_\mathfrak{a}\ \ |\alpha\notin R'\}.\]
$R'$ is the root system of $(\mathfrak{k}', \mathfrak{t}')$, thus a root subsystem of $R$. $R^\mathfrak{a}$ is the system of restricted roots of the symmetric pair $(\mathfrak{g}, \mathfrak{t})$. Define
\[R_+:=R'_+\cup \{\alpha\in R|\ \ \alpha|_\mathfrak{a}\in R^\mathfrak{a}_+\}.\]
Now we can fix a choice of the closed Weyl alcove $\Delta$ with respect to $R_+$. 
\begin{proposition}\label{OsheaSjamaar}
	\begin{enumerate}
		\item\label{sigmapreserv}(\cite{OS}, Lemma 4.7(i)) $\sigma_+$ is an involution and $\sigma_+(R_+)=R_+$. Hence $\sigma_+$ preserves $\mathfrak{t}_+^*$ and $\Delta^k$. 
		\item\label{RRRRHR} (\cite{OS}, Addendum 4.11) We have $\sigma_G^*\overline{V_\lambda}\cong V_{\sigma_+(\lambda)}$, where $V_\lambda$ is the irreducible complex $G$- representation with highest weight $\lambda$. $V_\lambda$ is in $RR(G, \mathbb{R})$ (resp. $RH(G, \mathbb{R})$) iff $\sigma_+(\lambda)=\lambda$ and $(k_0^2)^\lambda=1$ (resp. $(k_0^2)^\lambda=-1$). $V_{\lambda}\oplus V_{\sigma_+(\lambda)}$ is in $RR(G, \mathbb{C})$ iff $\sigma_+(\lambda)\neq \lambda$. 
		\item (\cite{OS}, Lemma 4.7(ii)) $\sigma_+(\lambda)=\lambda$ iff either $\lambda\in\mathfrak{a}^*$, or $\lambda\in(\mathfrak{t}')^*$ and $\lambda=-w_0'\lambda$.
		\item (\cite{OS}, Lemma 4.7(iii)) If $\lambda\in\mathfrak{a}^*$, then $(k_0^2)^\lambda=1$. If $\lambda\in(\mathfrak{t}')^*$ and $\lambda=-w_0'\lambda$, then $(k_0^2)^\lambda=\pm 1$.
		\item\label{invonflag} Let 
		\begin{align*}
			\sigma_{G/T}: G/T&\to G/T\\
			gT&\mapsto \sigma_G(g)k_0^{-1}T.
		\end{align*}
		Then the Weyl covering map 
		\begin{align*}
			(G/T, \sigma_{G/T})\times(T, \sigma_+)&\to G^-\\
			(gT, t)&\mapsto gtg^{-1}
		\end{align*}
		is a Real map.
	\end{enumerate}
\end{proposition}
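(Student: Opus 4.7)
The statement packages five assertions. The first four are quoted verbatim from \cite{OS} (Lemma 4.7 and Addendum 4.11), so my plan for them is simply to record the citations and move on. All substantive work concerns item (\ref{invonflag}).

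For (\ref{invonflag}) I would verify three things in sequence. First, \emph{well-definedness} of $\sigma_{G/T}$: because $k_0\in K'=Z_K(\mathfrak{a})$ centralizes $T_\mathfrak{a}:=\exp\mathfrak{a}$ and, by hypothesis, normalizes $T'$, it normalizes $T=T'T_\mathfrak{a}$; combined with the fact that $\sigma_G$ preserves $T$, this shows the coset $\sigma_G(g)k_0^{-1}T$ is independent of the representative $g$. Second, \emph{involutivity}: direct computation gives $\sigma_{G/T}^2(gT)=g\,\sigma_G(k_0^{-1})k_0^{-1}T$, and since $k_0\in K=G^{\sigma_G}$ one has $\sigma_G(k_0)=k_0$, reducing the expression to $g\,k_0^{-2}T$; since $(w_0')^2=e$ in $W'$, $k_0^2\in T'\subset T$, giving $\sigma_{G/T}^2=\mathrm{Id}$.

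The core of the argument is \emph{$\Gamma$-equivariance} of the Weyl covering map $\phi(gT,t)=gtg^{-1}$. Writing out
\[\phi(\sigma_{G/T}(gT),t)=\sigma_G(g)(k_0^{-1}tk_0)\sigma_G(g)^{-1},\qquad a_G(\phi(gT,t))=\sigma_G(g)\sigma_G(t)^{-1}\sigma_G(g)^{-1},\]
equivariance reduces to the pointwise identity $k_0^{-1}tk_0=\sigma_G(t)^{-1}$ for all $t\in T$. Decomposing $t=t't_\mathfrak{a}$ with $t'\in T'$, $t_\mathfrak{a}\in T_\mathfrak{a}$, both the conjugation by $k_0$ and the map $\sigma_G(\cdot)^{-1}$ respect this splitting, so the identity factors into two pieces. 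On $T_\mathfrak{a}$ it is immediate: the left side equals $t_\mathfrak{a}$ because $k_0$ centralizes $T_\mathfrak{a}$, and the right side equals $t_\mathfrak{a}$ because $\sigma_\mathfrak{g}|_\mathfrak{a}=-\mathrm{Id}$ gives $\sigma_G(t_\mathfrak{a})^{-1}=t_\mathfrak{a}$. On $T'$ the identity reduces to $(w_0')^{-1}\cdot t'=(t')^{-1}$, i.e.\ to the statement that $w_0'$ acts on $T'$ as inversion.

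The main obstacle is establishing this last claim about $w_0'$, which is where the structure of the compact symmetric pair $(G,K,\sigma_G)$ and the distinguished choice of $T\supset\mathfrak{a}$ come into play. I would prove it by exploiting that $w_0'$ and $\sigma_\mathfrak{g}$ commute on $\mathfrak{t}$ and jointly preserve the decomposition $\mathfrak{t}=\mathfrak{t}'\oplus\mathfrak{a}$, together with item (\ref{sigmapreserv}) of the proposition (that $\sigma_+=-\sigma_\mathfrak{g}\circ w_0'$ is an involution preserving $R_+$), which forces $w_0'|_{\mathfrak{t}'}=-\mathrm{Id}$; alternatively one can extract this directly from \cite{OS}. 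Once the key identity $k_0^{-1}tk_0=\sigma_G(t)^{-1}$ on $T$ is in hand, the $\Gamma$-equivariance of $\phi$ is immediate and (\ref{invonflag}) follows.
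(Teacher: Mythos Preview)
Your argument for part (\ref{invonflag}) has a genuine gap: the claim that $w_0'|_{\mathfrak{t}'}=-\mathrm{Id}$ is false in general, and item (\ref{sigmapreserv}) does not force it. On $\mathfrak{t}'$ one has $\sigma_\mathfrak{g}=\mathrm{Id}$, so $\sigma_+|_{\mathfrak{t}'}=-w_0'|_{\mathfrak{t}'}$; thus your claim is equivalent to $\sigma_+|_{\mathfrak{t}'}=\mathrm{Id}$, and since $\sigma_+|_\mathfrak{a}=\mathrm{Id}$ automatically, it is equivalent to $\sigma_+=\mathrm{Id}$ on all of $\mathfrak{t}$. But parts (\ref{RRRRHR})--(4) of the very proposition you are proving are built around the possibility that $\sigma_+(\lambda)\neq\lambda$ (yielding complex-type representations), so $\sigma_+$ is typically nontrivial. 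The fact that $\sigma_+$ is an involution preserving $R_+$ only says it is a diagram automorphism, not the identity.

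The underlying issue is that the statement as printed contains a typo: the involution on the $T$ factor should be $\sigma_+$ (via $\exp\xi\mapsto\exp\sigma_+(\xi)$), not $\mathrm{Id}$. This is exactly what the paper's own proof verifies, and it is consistent with how the result is used later in constructing the Real structure on the simplicial pieces $\mathcal{A}_I\times\Delta_I$, where the involution on the simplex coordinate is $\sigma_+$. With the corrected involution, the equivariance identity to check becomes $k_0^{-1}\exp(\sigma_+\xi)\,k_0=\sigma_G(\exp\xi)^{-1}$. The paper establishes this by a one-line Lie-algebra computation: using that $\mathrm{Ad}(k_0^{-1})=w_0'^{-1}$ commutes with $\sigma_\mathfrak{g}$ on $\mathfrak{t}$, one gets $w_0'^{-1}\sigma_+(\xi)=w_0'^{-1}(-\sigma_\mathfrak{g}(w_0'\xi))=-\sigma_\mathfrak{g}(\xi)$, which exponentiates to $\sigma_G(\exp\xi)^{-1}$. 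Your well-definedness and involutivity checks are fine and carry over unchanged; only the target identity on $T$ needs to be replaced, after which no special fact about $w_0'|_{\mathfrak{t}'}$ is required.
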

\begin{proof}
	We only show the last part. Let $\xi=\xi_1+\xi_2$ with $\xi_1\in\mathfrak{t}'$ and $\xi_2\in\mathfrak{a}$. Then
	\begin{align*}
		&\sigma_G(g)k_0^{-1}\text{exp}(\sigma_+(\xi_1+\xi_2))k_0\sigma_G(g)^{-1}\\
		=&\sigma_G(g)\text{exp}(w_0'^{-1}(-\sigma_\mathfrak{g}(w_0'(\xi_1+\xi_2))))\sigma_G(g)^{-1}\\
		=&\sigma_G(g)\text{exp}(-\xi_1+\xi_2)\sigma_G(g)^{-1}\\
		=&\sigma_G(g)\sigma_G(\text{exp}(\xi_1+\xi_2))^{-1}\sigma_G(g)^{-1}.
	\end{align*}
\end{proof}
\subsection{Construction of the equivariant Real fundamental DD bundle}\label{equivrealfundDD}
The equivariant fundamental DD-bundle $\mathcal{A}$ over $G$ was constructed in \cite{M1} using simplicial techniques. Let us first review the construction briefly and then point out how to equip $\mathcal{A}$ with a suitable Real structure so that it becomes an equivariant Real DD-bundle over $G^-$ with equivariant Real DD-class $[-\eta_G]\in H^3_{G\rtimes\Gamma}(G^-, \mathbb{Z}_\Gamma)$.

The compact Lie group $G$ admits the following simplicial description 
\[G=\coprod_I G/G_I\times \Delta_I/\sim\]
where, for $J\subset I$, 
\begin{eqnarray}\label{simrelation} (\varphi_I^J(gG_I), x)\sim (gG_I, \iota_J^I(x))\end{eqnarray}
with $\varphi_I^J: G/G_I\to G/G_J$ being the natural projection and $\iota_J^I:\Delta_J\to \Delta_I$ the inclusion of simplices. Let $\widetilde{G}_I$ be the universal cover of $G_I$, and define the central extension
		\[\widehat{G}_{I, t}:=\widetilde{G}_I\times_{\pi_1(G_I)}U(1)\]
of $G_I$, where 
$\pi_1(G_I)=\Lambda/\Lambda_I$ acts on $U(1)$ by multiplication by 
		\[
		\text{exp}(-2\pi\sqrt{-1}B(\lambda, \xi))\]
		for some $\xi\in\text{int}(\Delta_I)$ 
(the isomorphism class of the central extension $\widehat{G}_{I}$ is independent of the choice of $\xi$). 
	By \cite[Lemma 3.5]{M1}, there exists a Hilbert space $\mathcal{H}$ equipped with unitary representations of the central extensions $\widehat{G}_I$ such that the central circle acts with weight $-1$ and, for $J\subset I$, the action of $\widehat{G}_J$ restricts to that of $\widehat{G}_I$. Putting $\mathcal{A}_I=G\times_{G_I}\mathcal{K}(\mathcal{H})$ and 
	\[\mathcal{A}=\coprod_I (\mathcal{A}_I\times \Delta_I)/\sim\]
	where the relation $\sim$ is similar to the one used in Equation (\ref{simrelation}), we get the desired fundamental DD-bundle.
	
	To equip $\mathcal{A}$ with a suitable Real structure, we first choose $T$ to be the distinguished maximal torus with respect to $\sigma_G$ as in Section \ref{distingtor}. 
	By an easy generalization of (\ref{invonflag}) of Proposition \ref{OsheaSjamaar}, the involution on the simplicial pieces $G/G_I\times \Delta_I$ used in the simplicial description of $G$ defined by 
	\begin{align*}
		G/G_I\times\Delta_I&\to G/G_{\sigma_+(I)}\times \Delta_{\sigma_+(I)}\\
		(gG_I, \xi)&\mapsto (\sigma_G(g)k_0^{-1}G_{\sigma_+(I)}, \sigma_+(\xi))
	\end{align*}
	induces the anti-involution on $G$ (the involution $\sigma_+$ on $\Delta$, which permutes its subsimplices, induces an involution on the set of indices $\{I\}$, which by abuse of notation is also denoted by $\sigma_+$). Setting $\mathcal{H}':=\mathcal{H}\oplus\sigma_G^*\overline{\mathcal{H}}$ with the Real structure being swapping the two summands, we obtain a Real representation of $\widehat{G}_I$ with the central circle acting with weight $-1$, and $\mathcal{A}_I=G\times_{G_I}\mathcal{K}(\mathcal{H}')$. We find that the 
	involution on the simplicial pieces
	\begin{align*}
		\mathcal{A}_I\times\Delta_I&\to \mathcal{A}_{\sigma_+(I)}\times\Delta_{\sigma_+(I)}\\
		([g, F], \xi)&\mapsto ([\sigma_G(g)k_0^{-1}, \overline{F}], \sigma_+(\xi))
	\end{align*}
	induces the desired Real structure on $\mathcal{A}$. 
\begin{remark}
	The other generator of $H^3_{G\rtimes\Gamma}(G^-, \mathbb{Z}_\Gamma)$ of infinite order, $[-\eta_G]+\text{DD}_\mathbb{R}(\pi^*o_{\mathbb{R}^{0, 4}})$, is the DD class of the same DD bundle $\mathcal{A}$ except that the involution $\sigma_\mathcal{A}$ is induced by the involution 
	\[([g, F], \xi)\mapsto ([\sigma_G(g)k_0^{-1}, J\overline{F}J^{-1}], \sigma_+(\xi))\]
	on the simplicial pieces $\mathcal{A}_I\times\Delta_I$, where $J$ is a `quaternionic quarter turn' on $\mathcal{H}$. 
\end{remark}

\section{A generalization of the Freed-Hopkins-Teleman in the Real context}\label{realFHT}
\subsection{The Freed-Hopkins-Teleman Theorem}

Let $\mathcal{A}$ be an equivariant DD bundle whose DD class is the generator of $H^3_G(G, \mathbb{Z})\cong\mathbb{Z}$. The equivariant twisted $K$-homology $K^G_*(G, \mathcal{A}^p)$ has a multiplicative structure induced by the cross product (see ({\ref{crossedprod}) in Section \ref{twistedkrhomology}).
\[K_*^G(G, \mathcal{A}^p)\otimes K^G_*(G, \mathcal{A}^p)\to K^G_*(G\times G, \pi_1^*\mathcal{A}^p\otimes\pi_2^*\mathcal{A}^p)\]
followed by the pushforward map induced by the group multiplication
\[m_*: K^G_*(G\times G, \pi_1^*\mathcal{A}^p\otimes\pi_2^*\mathcal{A}^p)\to K^G_*(G, \mathcal{A}^p)\]
Note that there is a Morita isomorphism $m^*\mathcal{A}^p\cong \pi_1^*\mathcal{A}^p\otimes\pi_2^*\mathcal{A}^p$ because $m^*[\eta_G]=\pi_1^*[\eta_G]+\pi_2^*[\eta_G]$. $m_*$ is independent of the equivariant Hilbert space bundle $\mathcal{E}$ on $G\times G$ which witnesses this Morita isomorphism, and hence canonically defined, since $H^2_G(G\times G, \mathbb{Z})=0$ (cf. (1) of Section \ref{twistedkrhomology}). The Freed-Hopkins-Teleman Theorem\footnote{Note that the Freed-Hopkins-Teleman Theorem deals with more general compact Lie groups and $\mathbb{Z}_2$-graded twists (see \cite{FHT3}).} asserts that

\begin{theorem}[The Freed-Hopkins-Teleman Theorem, \cite{Fr}, \cite{FHT1}, \cite{FHT2}, \cite{FHT3}]\label{FHT}
	Let $G$ be a simple and simply-connected Lie group, and $\textsf{h}^\vee$ be the dual Coxeter number of $G$ (for definition see Section \ref{notedefs}). The equivariant twisted $K$-homology $K^G_*(G, \mathcal{A}^{k+\textsf{h}^\vee})$ is isomorphic to the level $k$ Verlinde algebra $R_k(G)$ (to be explained below), for $k\geq 0$. More precisely, the pushforward map 
	\[\iota_*: R(G)\cong K^G_*(\text{pt})\to K^G_*(G, \mathcal{A}^{k+\textsf{h}^\vee})\]
	is onto with kernel being $I_k$, the level $k$ Verlinde ideal (for definition see Section \ref{notedefs}). 
\end{theorem}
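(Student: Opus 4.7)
The plan is to use the simplicial decomposition $G = \coprod_I G/G_I \times \Delta_I/{\sim}$ from Section~\ref{equivrealfundDD} together with an equivariant cellular spectral sequence, or equivalently a Mayer--Vietoris argument over a tubular cover of the regular conjugacy classes, to reduce the computation of $K^G_*(G, \mathcal{A}^{k+\textsf{h}^\vee})$ to stratum-by-stratum contributions indexed by the faces $\Delta_I$ of the alcove. The twist $\mathcal{A}^{k+\textsf{h}^\vee}$ is Morita equivalent on each simplicial piece $G/G_I \times \Delta_I$ to the DD bundle built from the level $k+\textsf{h}^\vee$ central extension $\widehat{G}_{I,t}$ of Section~\ref{equivrealfundDD}, so the contribution from each stratum is governed by irreducible unitary representations of $\widehat{G}_I$ on which the central circle acts with the appropriate weight.

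For a regular level $k$ weight $\lambda \in \Lambda^*_k$, the point $\mu_\lambda := \exp_T\!\bigl(B^\sharp((\lambda+\rho)/(k+\textsf{h}^\vee))\bigr)$ has stabilizer $T$, and its conjugacy class is $G$-equivariantly diffeomorphic to $G/T$. I would exhibit a $G$-equivariant tubular neighborhood of this conjugacy class as the total space of a $\text{Spin}^c$ vector bundle over $G/T$ and combine the equivariant Thom isomorphism with the Morita trivialization of $\mathcal{A}^{k+\textsf{h}^\vee}$ there to identify the corresponding contribution with $\mathbb{Z}$, generated by Dirac induction of the character of $T$ of weight $\lambda+\rho$. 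A vanishing argument for the strata of positive codimension (using that at positive level $k+\textsf{h}^\vee$ the required central-weight representations of $\widehat{G}_I$ do not exist for $|I| \geq 1$) shows that these generators, one for each $\lambda \in \Lambda^*_k$, form a $\mathbb{Z}$-basis of $K^G_*(G, \mathcal{A}^{k+\textsf{h}^\vee})$.

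To compute $\iota_*$, the pushforward from the identity factors through the inclusions of the distinguished conjugacy classes by equivariant transversality. Via the Weyl character formula and the $\rho$-shift of Dirac induction, $\iota_*(V_\mu)$ becomes a $\mathbb{Z}$-linear combination of the above basis elements with coefficients proportional to $\chi_\mu(\mu_\lambda)$. The family $\{\chi_\mu\}_{\mu \in \Lambda^*_k}$ restricted to the finite set $\{\mu_\lambda : \lambda \in \Lambda^*_k\}$ gives a non-degenerate square matrix, so $\iota_*$ is surjective; and $V \in \ker\iota_*$ iff $\chi_V$ vanishes at every $\mu_\lambda$. By the defining description of $I_k$ in Section~\ref{notedefs} this means $\ker\iota_* = I_k$, producing the isomorphism $K^G_*(G, \mathcal{A}^{k+\textsf{h}^\vee}) \cong R(G)/I_k = R_k(G)$.

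The principal obstacle will be accurately tracking the appearance of $\textsf{h}^\vee$ in the twist parameter: it emerges as the combined effect of the $\rho$-shift from Dirac induction on the flag variety $G/T$ and the central-weight normalization produced by the fundamental DD bundle $\mathcal{A}$. Verifying that these two shifts assemble correctly requires a careful Morita-theoretic analysis at every face of the alcove, and it is this step that ultimately forces the level $k+\textsf{h}^\vee$ in the hypothesis. A secondary technical difficulty is establishing the degeneration of the cellular spectral sequence at the correct page, amounting to the vanishing of contributions from the non-open strata $G/G_I \times \Delta_I$ with $|I| \geq 1$ whenever $k \geq 0$.
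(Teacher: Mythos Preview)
The paper does not supply its own proof of Theorem~\ref{FHT}: it is quoted as an established result from \cite{Fr}, \cite{FHT1}, \cite{FHT2}, \cite{FHT3}, and the only proof discussion in the paper is the brief summary in Remark~\ref{lastrmk}(2) of Meinrenken's argument \cite{M1} via Segal's spectral sequence over a $G$-invariant open cover retracting to conjugacy classes. Your outline is broadly in that spirit, so there is no independent ``paper's proof'' to compare against beyond that sketch.

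That said, one step of your proposal is substantively wrong. You claim the contributions from the non-generic strata vanish because ``the required central-weight representations of $\widehat{G}_I$ do not exist for $|I|\geq 1$''. They do exist: for every proper face $\Delta_I$ the central extension $\widehat{G}_I$ has irreducible representations at any prescribed integral central weight, and the corresponding $E_1$-term is the nonzero representation group at that level. What actually happens in \cite{M1} (and is recorded in Remark~\ref{lastrmk}(2)) is that the $d_1$-differentials, which are holomorphic induction maps between these representation groups, assemble the $E_1$-page into a free resolution of $R_k(G)$ as an $R(G)$-module; the spectral sequence then collapses at $E_2$ with homology concentrated in a single degree. The non-open strata are precisely what furnish the relations cutting out the Verlinde ideal $I_k$, so you cannot dispense with them by a vanishing claim. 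Your character-theoretic identification of $\ker\iota_*$ at the end is correct in outline, but connecting it to the spectral sequence computation requires the resolution picture, not the assertion that the positive-codimension strata contribute nothing.
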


The above Theorem merits some remarks. On one end of the isomorphism is the Verlinde algebra, which is an object of great interest in mathematical physics. It is the Grothendieck group of the positive energy representations of the level $k$ central extension of the (free) loop group $LG$, equipped with an intricately defined ring structure called the fusion product. It is known that $R_k(G)$, as an abelian group, is generated freely by the isomorphism classes of irreducible positive energy representations $V_\lambda$ with highest weight $\lambda$ in $\Lambda_k^*$, the set of level $k$ weights (see Section \ref{notedefs} for definition). The fusion product rule can be stipulated by defining its structural constants $c_{\lambda\mu}^{\gamma}$ with respect to those generators satisfying
\[[V_\lambda]\cdot[V_\mu]=\sum_{\gamma\in\Lambda_k^*}c_{\lambda\mu}^{\gamma}[V_\gamma]\]
to be the dimension of a certain vector space associated with the Riemann surface of genus 0, with three punctures labelled by $\lambda$, $\mu$ and $\gamma^*:=-w_0\gamma$. This vector space has its root in Conformal Field Theory (see \cite{V}) and can be interpreted as the space of conformal blocks (for one of its models see \cite{Be}), which was shown to be canonically isomorphic to the space of generalized theta functions of the moduli space of $G$-bundles (cf. \cite{BL} and the references therein). Thus one of the novelties of the Freed-Hopkins-Teleman Theorem is that it provides an algebro-topological perspective of the fusion product, in addition to the conformal field theory and algebro-geometric approaches. An alternative definition of the Verlinde algebra as a quotient ring of $R(G)$ is given in Section \ref{notedefs}. 

Let $\mathcal{A}$ be the equivariant Real fundamental DD bundle on $G^-$, whose construction is given in Section \ref{equivrealfundDD}. We will consider $KR^G_*(G^-, \mathcal{A}^{k+\textsf{h}^\vee})$ to formulate the Real version of FHT and give its proof in this Section. 


\subsection{A structure theorem}
The following structure theorem is a generalization of \cite[Theorem 4.2]{Se}.
\begin{theorem}\label{strthm}
	Let $X$ be a Real $G$-space and $\mathcal{A}$ an equivariant Real DD bundle on $X$. Suppose $K^*_G(X, \mathcal{A})$ is a free abelian group which is decomposed by the involution $\sigma_{G}^*\circ\overline{\sigma}_{X}^*$ into the following summands
	\[K_G^*(X, \mathcal{A})=M_+\oplus M_-\oplus T\oplus\sigma_{G}^*\circ\overline{\sigma}_{X}^*T,\]
	where $\sigma_{G}^*\circ\overline{\sigma}_{X}^*$ is identity on $M_+$ and negation on $M_-$. Suppose further that there exist $x_1, \cdots, x_n\in KR_G^*(X, \mathcal{A})$ such that their images in $K_G^*(X, \mathcal{A})$ under the forgetful map (forgetting the Real structure) form a basis of $M_+\oplus M_-$. Let $F$ be the free $KR^*(\text{pt})$-module generated by $x_1, \cdots, x_n$, and $K^*(+)$ the complex $K$-theory of a point extended to a $\mathbb{Z}_8$-graded module over $K^*(\text{pt})\cong\mathbb{Z}$. Then the following map 
		\[f: F\oplus r(K^*(+)\otimes T)\to KR_G^*(X, \mathcal{A})\]
is an isomorphism of $KR^*(\text{pt})$-modules, where $r(x)=x+\sigma_G^*\circ\overline{\sigma}_X^*x\in KR^*_G(X, \mathcal{A})$ is the realification map defined in Proposition \ref{krprelim} (\ref{realcomplex}). 
\end{theorem}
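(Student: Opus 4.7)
The map $f$ is a homomorphism of $\mathbb{Z}_8$-graded $KR^*(\mathrm{pt})$-modules: the realification map $r$ is $KR^*(\mathrm{pt})$-linear by Proposition \ref{krprelim}, and $K^*(+)$ is naturally a $KR^*(\mathrm{pt})$-module via the forgetful map $KR^*(\mathrm{pt}) \to K^*(\mathrm{pt})$. My plan is to adapt the proof of Segal's structure theorem \cite[Theorem 4.2]{Se} to the equivariant twisted setting, the essential tool being the Wood-type long exact sequence
$$\cdots \to KR^{*-1}_G(X, \mathcal{A}) \xrightarrow{\cdot \eta} KR^*_G(X, \mathcal{A}) \xrightarrow{c} K^*_G(X, \mathcal{A}) \xrightarrow{\delta} KR^{*+1}_G(X, \mathcal{A}) \to \cdots$$
coming from multiplication by the generator $\eta \in KR^{-1}(\mathrm{pt})$, with connecting map $\delta$ a suspension of the realification map. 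This sequence is natural with respect to the involution $\overline{\sigma}_G^* \sigma_X^*$ on $K$-theory, so it splits along the decomposition $K_G^*(X,\mathcal{A}) = M_+ \oplus M_- \oplus T \oplus \overline{\sigma}_G^*\sigma_X^* T$.

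First I would analyze $c \circ f$. By hypothesis the $c(x_i)$ form a basis of $M_+ \oplus M_-$, so $c \circ f$ sends $F$ isomorphically onto $M_+\oplus M_-$. For the second summand, the identity $c \circ r = 1 + \overline{\sigma}_G^* \sigma_X^*$ from Proposition \ref{krprelim} shows that $c \circ f$ carries $r(K^*(+) \otimes T)$ onto the ``diagonal'' subgroup $\{t + \overline{\sigma}_G^*\sigma_X^* t : t \in T\}$ of $T \oplus \overline{\sigma}_G^*\sigma_X^* T$, which is precisely the image of $c$ restricted to the swap summand (by exactness of the Wood sequence and freeness of $K^*_G(X,\mathcal{A})$, which kills Tor obstructions). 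Together, $c\circ f$ is injective onto the full image of $c$.

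The crucial step, and the main obstacle, is $\eta$-torsion bookkeeping. One must show that the lifts $x_i$ of the $M_-$ generators, together with their $\eta$-multiples, account for the entire ``hidden'' $KR$-module structure invisible to $c$, while the swap summands contribute only via realification and carry a $K^*(\mathrm{pt})$-module rather than $KR^*(\mathrm{pt})$-module structure (reflected in the source of $f$ by the factor $K^*(+)$, whose $KR^*(\mathrm{pt})$-action factors through $c$). Using the freeness hypothesis and naturality of the Wood sequence with respect to the involution, the exact sequence for $KR^*_G(X,\mathcal{A})$ assembles into short exact sequences of $KR^*(\mathrm{pt})$-modules
$$0 \to \eta \cdot KR^{*-1}_G(X,\mathcal{A}) \to KR^*_G(X,\mathcal{A}) \xrightarrow{c} \mathrm{Image}(c) \to 0,$$
and the corresponding decomposition of the source of $f$ matches this term by term.

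The proof would conclude with a five-lemma comparison between the Wood exact sequence for $KR^*_G(X,\mathcal{A})$ and the corresponding algebraic complex computed on the source $F \oplus r(K^*(+)\otimes T)$. Injectivity of $f$ follows from injectivity of $c \circ f$ on $F$ (since $F$ is free and its image generates $M_+\oplus M_-$ freely) combined with the observation that the summand $r(K^*(+)\otimes T)$ is already in the image of $r$ and hence annihilated by no $\eta$-torsion relation not already present on $T$. Surjectivity of $f$ follows because, modulo the image of $\eta \cdot KR^{*-1}_G$, the image of $f$ exhausts the image of $c$, and the ``$M_-$-lifts'' $x_i$ together with their $\eta$-multiples fill in the kernel of $c$ precisely.
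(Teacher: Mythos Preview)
Your approach is genuinely different from the paper's. The paper follows Seymour's original argument for \cite[Theorem 4.2]{Se}: it sets up the $H(p,q)$ systems
\[
HR_G^\alpha(p,q)(X,\mathcal{A}) := KR_G^{-\alpha}(X \times S^{q,0}, X \times S^{p,0}, \pi^*\mathcal{A}),
\]
defines the map $f(p,q)$ at that level, and checks that on the $E_1$-page (where $KR^{-q}(S^{1,0}) \cong K^{-q}(\text{pt})$ and $KR_G^{-q}(X \times S^{1,0}, \pi^*\mathcal{A}) \cong K_G^{-q}(X,\mathcal{A})$) the induced map $f_1^{p,q}$ is an isomorphism. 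Convergence of the associated spectral sequence to $KR_G^*(X,\mathcal{A})$ (established via a Gysin sequence and an inverse-limit argument) then gives the result.

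Your Wood-sequence/five-lemma route has a genuine gap: it is circular as written. In the long exact sequence
\[
\cdots \to KR_G^{*-1}(X,\mathcal{A}) \xrightarrow{\cdot\eta} KR_G^*(X,\mathcal{A}) \xrightarrow{c} K_G^*(X,\mathcal{A}) \xrightarrow{\delta} KR_G^{*+1}(X,\mathcal{A}) \to \cdots
\]
the unknown $KR_G^*$ appears in every slot, so a five-lemma comparison in degree $*$ presupposes that $f$ is already an isomorphism in degrees $*\pm 1$. Your surjectivity step makes this explicit: you assert that the $\eta$-multiples of the $x_i$ ``fill in the kernel of $c$ precisely,'' but $\ker c = \eta \cdot KR_G^{*-1}(X,\mathcal{A})$, and matching $\eta\cdot\operatorname{im}(f)$ with this requires surjectivity of $f$ one degree down. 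A related error: $c\circ f$ does \emph{not} send $F$ isomorphically onto $M_+\oplus M_-$, since $c(\eta)=0$ kills the nontrivial submodule $\eta F\subset F$; only the generators $x_i$ map to a basis.

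The circularity can be broken, but you have not supplied the mechanism. The standard fix is to use $\eta^3=0$: filter source and target by powers of $\eta$, show $f$ is an isomorphism on each associated graded piece (this is where your analysis of $c\circ f$ and of the diagonal in $T\oplus\overline{\sigma}_G^*\sigma_X^*T$ would actually do work), and conclude by the finite filtration. Carried out carefully this is the $\eta$-Bockstein spectral sequence, which is precisely the paper's $H(p,q)$ spectral sequence in disguise (the Gysin sequence identifies the differentials with multiplication by powers of $\eta$). So once repaired, your argument converges to the paper's.
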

\begin{proof} The proof will be a straightforward adaptation of that of \cite[Theorem 4.2]{Se} in the twisted equivariant setting. Consider the following $H(p, q)$ systems
	\begin{align*}
		HR_G^\alpha(p, q)(X, \mathcal{A})&:= KR_G^{-\alpha}(X\times S^{q, 0}, X\times S^{p, 0}, \pi^*\mathcal{A})\cong KR_G^{-\alpha+p}(X\times S^{q-p, 0}, \pi^*\mathcal{A})\\
		H_G^\alpha(p, q)(X, \mathcal{A})&:=K_G^{-\alpha}(X\times S^{q, 0}, X\times S^{p, 0}, \pi^*\mathcal{A})\cong K_G^{-\alpha+p}(X\times S^{q-p, 0}, \pi^*\mathcal{A})
	\end{align*}
	We use $\pi$ to denote various projection maps by abuse of notations, and $G$ acts on the spheres trivially. A generalization of the Gysin sequence (cf. \cite[Proposition 3.2]{At})
	\[\cdots\longrightarrow KR_G^{p-q}(X, \mathcal{A})\stackrel{\cdot(-\eta)^p}{\longrightarrow}KR_G^{-q}(X, \mathcal{A})\stackrel{\pi^*}{\longrightarrow}KR_G^{-q}(X\times S^{p, 0}, \pi^*\mathcal{A})\stackrel{\delta}{\longrightarrow}\cdots\]
	implies the following short exact sequence for $p\geq3$
	\[0\longrightarrow KR_G^{-q}(X, \mathcal{A})\stackrel{\pi^*}{\longrightarrow} KR_G^{-q}(X\times S^{p, 0}, \pi^*\mathcal{A})\stackrel{\delta}{\longrightarrow} KR_G^{p+1-q}(X, \mathcal{A})\longrightarrow 0\]
	If we take inverse limits as $p$ tends to infinity, then $\displaystyle\lim_{\stackrel{\longleftarrow}{p}} KR_G^{p+1-q}(X, \mathcal{A})=0$ and hence
	\[KR_G^*(X, \mathcal{A})=\lim_{\stackrel{\longleftarrow}{p}} KR^*_G(X\times S^{p, 0}, \pi^*\mathcal{A})\]
	This shows that the spectral sequence associated to the system $HR^*_G(p, q)(X, \mathcal{A})$ does converge to $KR_G^*(X, \mathcal{A})$. That the spectral sequence associated to the system $H^*_G(p, q)(X, \mathcal{A})$ converges to $K_G^*(X, \mathcal{A})$ follows from the proof of \cite[Lemma 4.1]{Se}. Now we define a map
	\begin{align*}
		f(p, q): HR^\alpha(p, q)(\text{pt})\otimes_{KR^*(\text{pt})}F\oplus r(H^\alpha(p, q)(\text{pt})\otimes T)&\to HR_G^\alpha(p, q)(X, \mathcal{A})\\
		\rho_1\otimes a_1\oplus r(\rho_2\otimes a_2)&\mapsto \rho_1a_1+r(\rho_2a_2).
	\end{align*}
	As $F$ is a free $KR^*(\text{pt})$-module and $T$ a free abelian group, and tensoring free objects and taking cohomology commute, $f$ is the abutment of $f(p, q)$. On the $E^{p, q}_1$-page, $f(p, q)$ becomes
	\[f_1^{p, q}: KR^{-q}(S^{1, 0})\otimes_{KR^*(\text{pt})}F\oplus r(K^{-q}(S^{1, 0})\otimes T)\to KR_G^{-q}(X\times S^{1, 0}, \pi^*\mathcal{A}).\]
	Note that $KR^{-q}(S^{1, 0})\cong K^{-q}(\text{pt})$, $K^{-q} (S^{1, 0})\cong K^{-q}(\text{pt})\oplus K^{-q}(\text{pt})$ and $KR_G^{-q}(X\times S^{1, 0}, \pi^*\mathcal{A})\cong K_G^{-q}(X, \mathcal{A})$. With the above identifications, 
	\begin{align*}
		r: K^{-q}(S^{1, 0})\otimes T&\to KR_G^{-q}(X\times S^{1, 0}, \pi^*\mathcal{A})\cong K_G^{-q}(X, \mathcal{A})\\
		(u_1, 0)\otimes t_1+(0, u_2)\otimes t_2&\mapsto u_1t_1+\overline{u_2}\sigma_G^*\circ\sigma_X^*\overline{t_2}.
	\end{align*} 
	So $r(K^*(S^{1, 0})\otimes T)=T\oplus \sigma_G^*\circ\overline{\sigma}_X^*T$. Together with the identification $KR^*(S^{1, 0})\otimes_{KR^*(\text{pt})}F\cong M_+\oplus M_-$, we have that $f_1^{p, q}$ is an isomorphism, and so is its abutment $f$. 
\end{proof}
\begin{corollary}\label{strthmhomology}
	Theorem \ref{strthm} still holds if $X$ is assumed to be a Real $G$-manifold which is Real orientable, twisted equivariant $KR$-theory is replaced by twisted equivariant $KR$-homology, $\sigma_X^*$ by $\sigma_{X*}$ throughout its statement.
\end{corollary}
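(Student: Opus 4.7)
The plan is to deduce the homology statement from Theorem \ref{strthm} by invoking $KR$-theoretic Poincar\'e duality. Because $X$ is a Real orientable $G$-manifold with $TX$ of Real type $(p,q)$, we have the natural isomorphism
$$KR^G_{q-r,\,p-s}(X, \mathcal{A}) \cong KR_G^{r, s}(X, \mathcal{A}^{\text{opp}} \otimes o_{TX})$$
and the analogous ungraded duality $K^G_*(X, \mathcal{A}) \cong K_G^*(X, \mathcal{A}^{\text{opp}} \otimes o_{TX})$ from Section \ref{twistedkrhomology}. Setting $\mathcal{B} := \mathcal{A}^{\text{opp}} \otimes o_{TX}$, I would apply Theorem \ref{strthm} to the pair $(X, \mathcal{B})$ and then Poincar\'e-dualize the resulting isomorphism back to one for $KR^G_*(X, \mathcal{A})$.

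For this to succeed I first have to verify that the involution $\overline{\sigma}_G^* \circ \sigma_{X*}$ on $K^G_*(X, \mathcal{A})$ corresponds, under Poincar\'e duality, to the involution $\overline{\sigma}_G^* \circ \sigma_X^*$ on $K_G^*(X, \mathcal{B})$. This is essentially naturality of Poincar\'e duality under the Real automorphism $\sigma_X$, and reduces to showing that $\sigma_{X*}$ preserves the fundamental class $[X]$ up to the orientation twist built into $o_{TX}$. Because the Real $(p, q)$-$\text{Spin}^c$ structure on $TX$ is by hypothesis $\sigma_X$-equivariant, the Real spinor bundle and hence the Real Dirac operator representing $[X]$ are intertwined by $\sigma_X$, so this compatibility holds.

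Granting the above, the decomposition $K^G_*(X, \mathcal{A}) = M_+ \oplus M_- \oplus T \oplus \overline{\sigma}_G^* \sigma_{X*} T$, together with the Real lifts $x_1, \ldots, x_n \in KR^G_*(X, \mathcal{A})$ of $M_+ \oplus M_-$, transports via Poincar\'e duality to the corresponding data for $(X, \mathcal{B})$ in twisted equivariant $K$-theory and $KR$-theory, satisfying the hypotheses of Theorem \ref{strthm}. The conclusion of that theorem, namely the isomorphism
$$F \oplus r(K^*(+) \otimes T) \stackrel{\cong}{\longrightarrow} KR_G^*(X, \mathcal{B}),$$
then dualizes to give the desired isomorphism with codomain $KR^G_*(X, \mathcal{A})$. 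The main obstacle is precisely the compatibility check of the preceding paragraph: once one knows the Real Poincar\'e duality isomorphism intertwines $\sigma_{X*}$ on homology with $\sigma_X^*$ on cohomology, the remainder of the argument is a formal transport of structure along the duality.
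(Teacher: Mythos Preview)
Your overall strategy --- reduce to Theorem \ref{strthm} via twisted Poincar\'e duality --- is exactly the paper's approach. The difference lies entirely in how the key compatibility step (which you rightly flag as the crux) is handled.

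Your justification has two soft spots. First, you invoke a Real $(p,q)$-$\text{Spin}^c$ structure on $TX$, but the hypothesis is only Real \emph{orientability}; this is why the twist $o_{TX}$ appears in $\mathcal{B}$ at all. Second, and more substantively, the argument that ``the Real Dirac operator is intertwined by $\sigma_X$'' lives on the $KR$ side, whereas the decomposition $M_+\oplus M_-\oplus T\oplus\overline{\sigma}_G^*\sigma_{X*}T$ that must be transported is on the \emph{complex} $K$-homology side. There the involution $\sigma_X$ need not preserve the complex fundamental class; if $\sigma_X$ reverses orientation then $\sigma_{X*}[X]=-[X]$.

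The paper addresses this directly on the $K$-theory side via the push-pull formula: from $\sigma_{X*}(\sigma_X^*(x)\cdot y)=x\cdot\sigma_{X*}(y)$ with $y=1$ one computes $\sigma_{X*}(1)=\pm 1$ (the sign according to whether $\sigma_X$ preserves orientation), and hence the wrong-way map $\sigma_{X*}$ on $K_G^*(X,\mathcal{B})$ equals $\pm\sigma_X^*$. Thus Poincar\'e duality intertwines $\overline{\sigma}_G^*\circ\sigma_{X*}$ on homology with $\overline{\sigma}_G^*\circ(\pm\sigma_X^*)$ on cohomology. The possible sign is harmless --- it merely swaps $M_+$ and $M_-$, and Theorem \ref{strthm} is symmetric in these --- but it must be acknowledged, and your naturality argument as stated does not produce it.
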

\begin{proof}
	Let $X$ be Real $(p, q)$-orientable. By abuse of notation, we use pd to denote the Poincar\'e duality map for both the complex $K$-theory $K_G^*(X, \mathcal{A})\to K^G_{p+q-*}(X, \mathcal{A}^\text{opp}\otimes o_{TX})$ and $KR$-theory $KR_G^*(X, \mathcal{A})\to KR^G_{q-p-*}(X, \mathcal{A}^{\text{opp}}\otimes o_{TX})$. We shall show that the following square
	\begin{eqnarray}
		\xymatrix{K_G^*(X, \mathcal{A})\ar[r]^{\sigma_G^*\circ\pm\overline{\sigma}_X^*}\ar[d]^{\text{pd}}&K_G^*(X, \mathcal{A})\ar[d]^{\text{pd}}\\ K^G_{p+q-*}(X, \mathcal{A}^\text{opp}\otimes o_{TX})\ar[r]^{\sigma_G^*\circ\overline{\sigma}_{X*}}&K^G_{p+q-*}(X, \mathcal{A}^\text{opp}\otimes o_{TX})}
	\end{eqnarray}
	commutes, implying that the map pd for complex $K$-theory respects the decomposition of $K_G^*(X, \mathcal{A})$ and $K^G_*(X, \mathcal{A})$ by $\sigma_G\circ\overline{\sigma}^*_X$ and $\sigma_G^*\circ\overline{\sigma}_{X*}$ respectively as in Theorem \ref{strthm} (here the choice of the sign of the top map depends on whether $\sigma_X$ is orientation preserving or not). 
The Corollary then follows from applying Poincar\'e duality for $KR$-theory to the structural description of $KR_G^*(X, \mathcal{A})$ given in Theorem \ref{strthm}. That $\sigma_G^*$ and complex conjugation commute with $\text{pd}$ being obvious, it remains to show that $\text{pd}\circ\pm\sigma_X^*=\sigma_{X*}\circ\text{pd}$. On the one hand, the wrong-way map $\sigma_{X*}$ on $K_G^*(X, \mathcal{A})$ satisfies $\text{pd}\circ\sigma_{X*}=\sigma_{X*}\circ \text{pd}$ by definition. On the other hand, we have the push-pull formula
	\[\sigma_{X*}(\sigma_X^*(x)\cdot y)=x\cdot\sigma_{X*}(y)\]
	for $x\in K_G^*(X, \mathcal{A})$ and $y\in K_G^*(X)$. Letting $y=1$, we get $\sigma_{X*}(1)=\text{pd}^{-1}\circ\sigma^*_X\circ\text{pd}(1)=\text{pd}^{-1}\circ\sigma^*_X([M])=\pm 1$, the choice of sign depending on whether $\sigma_X$ is orientation preserving. The push-pull formula then yields that, on $K_G^*(X, \mathcal{A})$, $\sigma_{X*}=\pm(\sigma_X^*)^{-1}=\pm\sigma_X^*$, concluding the proof.
\end{proof}

\subsection{The module structure of $KR^G_*(G^-, \mathcal{A}^{k+\textsf{h}^\vee})$}
\begin{lemma}\label{preservlevelk}
	The set of level $k$ weights $\Lambda_k^*$ is invariant under $\sigma_+$.
\end{lemma}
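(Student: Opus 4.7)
The plan is to unpack the definition of $\sigma_+$ and observe that it is the composition of three maps, each of which preserves the weight lattice $\Lambda^*$, and then invoke Proposition \ref{OsheaSjamaar}(\ref{sigmapreserv}) to handle the alcove part.

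First I would note that $\Lambda_k^* = \Lambda^* \cap \Delta^k$ by definition, so it suffices to show that $\sigma_+$ preserves $\Lambda^*$ and preserves $\Delta^k$ separately; the intersection is then automatically preserved. Preservation of $\Delta^k$ is already given for free by Proposition \ref{OsheaSjamaar}(\ref{sigmapreserv}), so the only thing to verify is that $\sigma_+(\Lambda^*) \subseteq \Lambda^*$.

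For this, recall $\sigma_+(\lambda) = -\sigma_{\mathfrak{g}}(w_0' \lambda)$. I would check three factors: (i) negation preserves $\Lambda^*$ trivially; (ii) $w_0' \in W'\subseteq W$ is an element of the Weyl group of $G$ (realized via the representative $k_0 \in N_{(K')_0}(T')$ acting on $\mathfrak{t}^*$ through $T \supseteq T'$), and Weyl group elements preserve $\Lambda^*$; (iii) $\sigma_{\mathfrak{g}}$, viewed on $\mathfrak{t}^*$, is the derivative of the restriction of the Lie group involution $\sigma_G$ to the chosen distinguished maximal torus $T$ (by construction in Section \ref{distingtor}, $\sigma_G$ preserves $\mathfrak{t} = \mathfrak{t}' \oplus \mathfrak{a}$, hence preserves $T$), so $\sigma_{\mathfrak{g}}$ induces a group automorphism of $T$ and therefore sends the character lattice $\Lambda^*$ to itself. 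Composing, $\sigma_+$ preserves $\Lambda^*$.

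There is essentially no obstacle here; the lemma is a bookkeeping check ensuring that the two lattice-theoretic ingredients entering the definition of $\sigma_+$ are both compatible with $\Lambda^*$. The only subtle point to be careful about is making sure that the $w_0'$ used on $\mathfrak{t}^*$ is genuinely realized by a Weyl group element acting on the full lattice (not just a linear map on $\mathfrak{a}^*$ or $(\mathfrak{t}')^*$), which is exactly how $\sigma_+$ was set up in Section \ref{distingtor} via the representative $k_0 \in N_{(K')_0}(T') \subset N_G(T)$.
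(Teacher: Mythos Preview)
Your proof is correct and follows essentially the same approach as the paper's: the paper's proof is a single sentence invoking Proposition \ref{OsheaSjamaar}(\ref{sigmapreserv}) together with $\Lambda_k^*=\Delta^k\cap\Lambda^*$, leaving the verification that $\sigma_+$ preserves $\Lambda^*$ implicit, whereas you spell that step out explicitly by decomposing $\sigma_+$ into its three factors. Your care in checking that $k_0\in N_{(K')_0}(T')\subset N_G(T)$ (so that $w_0'$ genuinely acts as a Weyl element on all of $\mathfrak{t}^*$) is the one nontrivial point the paper suppresses, and you handle it correctly.
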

\begin{proof}
	This follows from Proposition \ref{OsheaSjamaar} (\ref{sigmapreserv}) and the fact that $\Lambda_k^*=\Delta^k\cap\Lambda^*$.
\end{proof}

\begin{proposition}\label{FHTdecomp}
	Let $P\subset \Lambda_k^*$ be a set of representatives of the 2-element orbits of the action of $\sigma_+$ on $\Lambda_k^*$. The involution $\overline{\sigma}_{G}^*\circ\sigma_{G^-*}$ decomposes $K^G_*(G^-, \mathcal{A}^{k+\textsf{h}^\vee})\cong R_k(G)$ into the following summands
	\[\bigoplus_{\{\lambda\in\Lambda^*_k|\sigma_+(\lambda)=\lambda\}}\mathbb{Z}[V_\lambda]\oplus\bigoplus_{\nu\in P}\mathbb{Z}[V_\nu]\oplus \bigoplus_{\nu\in P}\mathbb{Z}[V_{\sigma_+(\nu)}].\]
\end{proposition}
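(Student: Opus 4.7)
The plan is to transport the involution $\overline{\sigma}_G^*\sigma_{G^-*}$ through the ordinary Freed--Hopkins--Teleman isomorphism so that the decomposition reduces to a direct calculation on $R(G)$ using Proposition \ref{OsheaSjamaar}.

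First I would invoke Theorem \ref{FHT}: the pushforward $\iota_*\colon R(G)\cong K^G_*(\mathrm{pt})\to K^G_*(G^-,\mathcal{A}^{k+\textsf{h}^\vee})$ along the inclusion $\iota\colon\mathrm{pt}\hookrightarrow G^-$ of the identity element is surjective with kernel $I_k$, and so induces an isomorphism $R_k(G)\cong K^G_*(G^-,\mathcal{A}^{k+\textsf{h}^\vee})$ under which the classes $\{[V_\lambda]\}_{\lambda\in\Lambda_k^*}$ form a $\mathbb{Z}$-basis of the right-hand side.

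Next I would match the two involutions. Because $a_G(e)=\sigma_G(e^{-1})=e$, the point $e\in G^-$ is $\Gamma$-fixed, so $\iota$ is a $(G\rtimes\Gamma)$-equivariant map and admits a $\Gamma$-equivariant Morita morphism refinement (the twist $\mathcal{A}^{k+\textsf{h}^\vee}$ is trivializable near $e$ and the trivialization can be chosen $\Gamma$-equivariantly since the fiber involution is anti-linear of the appropriate type at the fixed point $e$). Naturality of the pushforward then yields
\[
\overline{\sigma}_G^*\,\sigma_{G^-*}\circ\iota_*=\iota_*\circ\overline{\sigma}_G^*,
\]
where on the right $\overline{\sigma}_G^*$ denotes the standard anti-linear involution on $R(G)$ given by pullback along $\sigma_G$ followed by complex conjugation. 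Hence under FHT the involution $\overline{\sigma}_G^*\sigma_{G^-*}$ on $K^G_*(G^-,\mathcal{A}^{k+\textsf{h}^\vee})$ is identified with $\overline{\sigma}_G^*$ on $R_k(G)$.

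Finally I would read off the action on generators. By Proposition \ref{OsheaSjamaar}(\ref{RRRRHR}), $\sigma_G^*\overline{V_\lambda}\cong V_{\sigma_+(\lambda)}$, hence $\overline{\sigma}_G^*[V_\lambda]=[V_{\sigma_+(\lambda)}]$. Lemma \ref{preservlevelk} guarantees that $\sigma_+$ preserves $\Lambda_k^*$, so the involution $\overline{\sigma}_G^*$ permutes the $\mathbb{Z}$-basis $\{[V_\lambda]\}_{\lambda\in\Lambda_k^*}$ according to the $\sigma_+$-orbits. Decomposing $R_k(G)$ as the direct sum of the spans of these orbits, with fixed orbits contributing $\mathbb{Z}[V_\lambda]$ and 2-element orbits $\{\nu,\sigma_+(\nu)\}_{\nu\in P}$ contributing $\mathbb{Z}[V_\nu]\oplus\mathbb{Z}[V_{\sigma_+(\nu)}]$, gives exactly the claimed decomposition.

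The main obstacle is the intertwining step in the second paragraph: one has to verify that the pushforward map $\iota_*$ admits a canonical $\Gamma$-equivariant lift, i.e.\ that the Morita morphism witnessing $\iota_*$ respects the Real structures. This reduces to analyzing the equivariant Real trivialization of $\mathcal{A}^{k+\textsf{h}^\vee}$ over a $\Gamma$-invariant neighborhood of $e$, which is possible because the stabilizer of $e$ under $\Gamma$ is all of $\Gamma$ and the relevant piece of the simplicial construction in Section \ref{equivrealfundDD} is based on the $\sigma_+$-fixed vertex $0\in\Delta$.
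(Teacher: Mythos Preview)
Your proof is correct and follows essentially the same route as the paper: transport the involution through FHT via the intertwining relation $\overline{\sigma}_G^*\sigma_{G^-*}\circ\iota_*=\iota_*\circ\overline{\sigma}_G^*$, then read off the permutation of the basis $\{[V_\lambda]\}$ via $\sigma_G^*\overline{V_\lambda}\cong V_{\sigma_+(\lambda)}$. The ``obstacle'' you flag is simpler than you suggest: since the statement concerns ordinary (not Real) $K$-homology, no $\Gamma$-equivariant Morita data is needed---the paper just uses $\sigma_{G^-}\circ\iota=\iota$ together with the canonicity of $\iota_*$ coming from $H^2_G(\mathrm{pt},\mathbb{Z})=0$ (cf.\ Section~\ref{twistedkrhomology}(\ref{canonicalmorita})).
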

\begin{proof}
	It suffices to show that $\overline{\sigma}_{G}^*\circ\sigma_{G^-*}(V_\lambda)=V_{\sigma_+(\lambda)}$. The pushforward map $\iota_*: K^G_*(\text{pt})\to K^G_*(G, \mathcal{A}^{k+\textsf{h}^\vee})$, where $\text{pt}\in G$ is the identity element, can be easily seen to satisfy $\overline{\sigma}_{G}^*\circ\sigma_{G^-*}\circ\iota_*=\overline{\sigma}_G^*\circ\iota_*=\iota_*\circ\overline{\sigma}_{G}^*$. Note that for $\lambda\in\Lambda_k^*$, $W_\lambda\in K^G_*(\text{pt})$, the irreducible representation of $G$ with highest weight $\lambda$, is sent by $\iota_*$ to $V_\lambda\in K^G_*(G, \mathcal{A}^{k+\textsf{h}^\vee})$, the irreducible positive energy representation of $LG$ with highest weight $\lambda$. It follows that
	\begin{align*}
		\overline{\sigma}_{G}^*\circ\sigma_{G^-*}(V_\lambda)&=\iota_*\circ\overline{\sigma}_{G}^*(W_\lambda)\\
		&=\iota_*(W_{\sigma_+(\lambda)})\\
		&=V_{\sigma_+(\lambda)}.
	\end{align*}
\end{proof}

\begin{remark}
	Recall that a level $k$ positive energy representation of $LG$ is the projective representation $V$ descending from a representation of $S^1\ltimes\widetilde{LG}_k$ (where $\widetilde{LG}_k$ is the level $k$ central extension of $LG$ by the circle group and the $S^1$-action on $\widetilde{LG}_k$ is induced by rotating the loops in $G$) such that the weights of the restricted $S^1$-representation are bounded below. If $LG$ is equipped with the involution
	\begin{align*}
		\sigma_{LG}: LG&\to LG\\
				\ell&\mapsto\sigma_G\circ\ell\circ \delta
	\end{align*}
	where $\delta$ is the reflection on the unit circle, then it induces an involution on $S^1\ltimes\widetilde{LG}_k$ and $\sigma_{LG}^*\overline{V}$ is also a positive energy representation (note that the weights of the $S^1$-action on $\overline{V}$ are negative of those on $V$, so $\delta$ is put in place in the definition of $\sigma_{LG}$ to ensure that the weights of the $S^1$-action on $\sigma_{LG}^*\overline{V}$ are bounded below). One can see that $\sigma_{LG}^*\overline{V}_\lambda=V_{\sigma_+(\lambda)}$, and so by Proposition \ref{FHTdecomp}, the involution $\overline{\sigma}_{LG}^*$ on $R_k(G)$ corresponds to $\overline{\sigma}_G^*\sigma_{G^-*}$ on $K^G_*(G^-, \mathcal{A}^{k+\textsf{h}^\vee})$.
\end{remark}
\begin{corollary}\label{realFHTstrthm}
	Let $\mathcal{A}$ be the equivariant Real fundamental DD bundle over $G^-$ constructed in Section \ref{equivrealfundDD}. Using the same notations as in the previous Proposition, we have the following $KR_*(\text{pt})$-module isomorphism
	\[KR^G_*(G^-, \mathcal{A}^{k+\textsf{h}^\vee})\cong KR_*(\text{pt})\otimes \bigoplus_{\{\lambda\in\Lambda_k^*|\sigma_+(\lambda)=\lambda\}}\mathbb{Z}[V'_\lambda]\oplus r\left(K_*(+)\otimes\bigoplus_{\nu\in P}\mathbb{Z}[V_\nu]\right) \]
	Here $[V'_\lambda]$ is the element in $KR^G_*(G^-, \mathcal{A}^{k+\textsf{h}^\vee})$ which is mapped by the forgetful map to $[V_\lambda]\in K^G_*(G, \mathcal{A}^{k+\textsf{h}^\vee})$, and is assigned with degree 0 or $-4$ according as whether $(k_0^2)^\lambda=1$ or $-1$.
\end{corollary}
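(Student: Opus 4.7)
The plan is to derive the corollary by a direct application of Corollary \ref{strthmhomology} to $X = G^-$ with twist $\mathcal{A}^{k+\textsf{h}^\vee}$, using the classical FHT and Proposition \ref{FHTdecomp} to identify the pieces $M_\pm$ and $T$, and then exhibiting the required Real lifts of the basis of $M_+$ by pushforward from a fixed point.

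First, Theorem \ref{FHT} identifies $K^G_*(G^-, \mathcal{A}^{k+\textsf{h}^\vee})$ with the free abelian group on $\{[V_\lambda] : \lambda \in \Lambda_k^*\}$, and by Proposition \ref{FHTdecomp} the involution $\overline{\sigma}_G^* \circ \sigma_{G^-*}$ fixes $[V_\lambda]$ when $\sigma_+(\lambda) = \lambda$ and swaps $[V_\nu] \leftrightarrow [V_{\sigma_+(\nu)}]$ on the other $\sigma_+$-orbits. Hence, in the notation of the structure theorem,
\[
M_+ = \bigoplus_{\sigma_+(\lambda) = \lambda} \mathbb{Z}[V_\lambda], \qquad M_- = 0, \qquad T = \bigoplus_{\nu \in P} \mathbb{Z}[V_\nu].
\]

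Next, I would produce $V'_\lambda \in KR^G_*(G^-, \mathcal{A}^{k+\textsf{h}^\vee})$ lifting the basis of $M_+$ in the advertised degrees. By Proposition \ref{OsheaSjamaar}(\ref{RRRRHR}), when $\sigma_+(\lambda) = \lambda$ the complex representation $V_\lambda$ acquires a $\sigma_G$-compatible Real (resp.\ Quaternionic) structure according as $(k_0^2)^\lambda = 1$ or $-1$, so it represents a class in $RR(G, \mathbb{R}) \cong KR^G_0(\text{pt})$ or in $RH(G, \mathbb{R}) \cong KR^G_{-4}(\text{pt})$. The inclusion $\iota : \{e\} \hookrightarrow G^-$ is an equivariant Real map since $a_G(e) = e$, and $\iota^*\mathcal{A}^{k+\textsf{h}^\vee}$ is canonically Morita trivial: its equivariant complex DD class lies in $H^3_G(\text{pt}, \mathbb{Z}) = 0$, and the simplicial construction of Section \ref{equivrealfundDD} is manifestly trivial over the vertex $\Delta_{\{0\}}$, providing an explicit Real trivialization. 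Setting $V'_\lambda := \iota_*[V_\lambda]$ then yields a class in $KR^G_0$ or $KR^G_{-4}$, and by naturality of the forgetful map it is sent to $[V_\lambda] \in K^G_*(G^-, \mathcal{A}^{k+\textsf{h}^\vee})$.

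Finally, one checks that $G^-$ is Real $(p,q)$-orientable for the relevant $(p,q)$ (which follows from bi-invariance of $TG$ and the compatibility of $a_G$ with the Real framing used in Section \ref{equivrealfundDD}), so that Corollary \ref{strthmhomology} applies; its output is exactly the stated $KR_*(\text{pt})$-module decomposition. I expect the main obstacle to lie in the pushforward step --- verifying Real Morita triviality of $\iota^*\mathcal{A}^{k+\textsf{h}^\vee}$ and confirming that $\iota_*$ respects the real versus quaternionic distinction so that $V'_\lambda$ lands in the correct degree --- after which the structure theorem handles the remaining bookkeeping.
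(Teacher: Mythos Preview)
Your proposal is correct and follows essentially the same route as the paper. Both arguments (i) use FHT and Proposition \ref{FHTdecomp} to decompose $K^G_*(G,\mathcal{A}^{k+\textsf{h}^\vee})$ under the involution, (ii) construct the lifts $V'_\lambda$ as pushforwards $\iota_*^{\mathbb{R}}$ of the Real/Quaternionic irreducible $G$-representations supplied by Proposition \ref{OsheaSjamaar}(\ref{RRRRHR}), verifying $c(V'_\lambda)=V_\lambda$ via the square
\[
\xymatrix{KR^G_*(\text{pt})\ar[r]^{\iota_*^\mathbb{R}}\ar[d]^c& KR^G_*(G^-, \mathcal{A}^{k+\textsf{h}^\vee})\ar[d]^{c}\\ K^G_*(\text{pt})\ar[r]^{\iota_*}& K^G_*(G, \mathcal{A}^{k+\textsf{h}^\vee}),}
\]
and (iii) invoke Corollary \ref{strthmhomology}. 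The paper is simply terser: it does not spell out $M_-=0$, the Real orientability of $G^-$, or the Real Morita triviality of $\iota^*\mathcal{A}^{k+\textsf{h}^\vee}$, all of which you rightly flag as points to check but which are routine given the explicit simplicial construction in Section \ref{equivrealfundDD}. One minor notational slip: in your pushforward step you write $\iota_*[V_\lambda]$ where you mean the finite-dimensional $G$-representation (the paper's $W'_\lambda$), not the positive energy representation; keep these distinct.
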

\begin{proof}
	We shall show that those $[V'_\lambda]\in KR^G_*(G^-, \mathcal{A}^{k+\textsf{h}^\vee})$ which are sent by the forgetful map to $V_\lambda\in K^G_*(G, \mathcal{A}^{k+\textsf{h}^\vee})$ exist. Consider the commutative diagram
	\begin{eqnarray}\label{commforget}
		\xymatrix{KR^G_*(\text{pt})\ar[r]^{\iota_*^\mathbb{R}}\ar[d]^c& KR^G_*(G^-, \mathcal{A}^{k+\textsf{h}^\vee})\ar[d]^{c}\\ K^G_*(\text{pt})\ar[r]^{\iota_*}& K^G_*(G, \mathcal{A}^{k+\textsf{h}^\vee})}
	\end{eqnarray}
	where the vertical maps are forgetful maps. Let $W'_\lambda\in KR^G_*(\text{pt})$ be the Real or Quaternionic representation (depending on whether $(k_0^2)^\lambda=1$ or $-1$, respectively) satisfying $c(W'_\lambda)=W_\lambda$. Let $V'_\lambda=\iota^\mathbb{R}_*(W'_\lambda)$. By the commutativity of the above diagram, $c(V'_\lambda)$ is indeed $V_\lambda$. Now we can apply Corollary \ref{strthmhomology} and Proposition \ref{FHTdecomp} to obtain the desired isomorphism.
\end{proof}
\begin{remark}\label{geometriccycles}
	In fact, $[V'_\lambda]$ and $[r(V_\mu)]$ can be realized, following \cite[Theorem 4.13]{M1}, as the image of the fundamental class of some conjagacy classes under the pushforward map induced by inclusion into $G^-$. Let $\lambda\in \Lambda^*_k$ such that $\sigma_+(\lambda)=\lambda$ and $\frac{B^\sharp(\lambda)}{k}\in \Delta_I$ and $\mathcal{C}^-_\lambda$ be the conjugacy class of $\text{exp}\left(\frac{B^\sharp(\lambda)}{k}\right)$ with the Real structure inherited from $a_G$ on $G^-$, i.e. the involution $gG_I\to \sigma_G(g)k_0^{-1}G_I$ if we identify $\mathcal{C}_\lambda^-$ with $G/G_I$\footnote{Note that in this case, $\sigma_+(I)=I$}. Let $(\iota_{\mathcal{C}^-_\lambda}, \mathcal{E}): (\mathcal{C}^-_\lambda, \text{Cl}(T\mathcal{C}^-_\lambda))\to (G^-, \mathcal{A}^{k+\textsf{h}^\vee})$ be the Morita morphism induced by inclusion, where 
	\[\mathcal{E}:=G\times_{G_I}(\mathcal{H}^k\otimes\mathbb{C}_\lambda\otimes\text{Hom}(\mathcal{H}^{\textsf{h}^\vee}, \textsf{S}_I))\]
	is the Hilbert space bundle on $G/G_I$ witnessing the Morita isomorphism $\text{Cl}(T\mathcal{C}^-_\lambda)\simeq \iota_{\mathcal{C}^-_\lambda}^*\mathcal{A}^{k+\textsf{h}^\vee}$
	with $\mathcal{H}$ being a Hilbert space with trivial $G_I$-action and $\textsf{S}_I$ the $G_I$-equivariant spinor module for $\text{Cl}(\mathfrak{g}_I^\perp)$ (cf. \cite[Equations (21), (26), (28)]{M1}). The Real or Quaternionic structure on $\mathcal{E}$ is given by 
	\[[gG_I, v\otimes z\otimes F]\mapsto [\sigma_G(g)k_0^{-1}G_I, \overline{v}\otimes\overline{z}\otimes\overline{F}]\] 
	The type of $\mathcal{E}$ (Real or Quaternionic bundle) depends on whether $(k_0^2)^\lambda=1$ or $-1$. The pushforward
	\[\iota_{\mathcal{C}^-_\lambda*}: KR^G_0(\mathcal{C}^-_\lambda, \text{Cl}(T\mathcal{C}^-_\lambda))\to KR^G_*(G^-, \mathcal{A}^{k+\textsf{h}^\vee})\]
	takes the fundamental class $[\mathcal{C}^-_\lambda]$ to $[V'_\lambda]$\footnote{The fundamental class $[\mathcal{C}_\lambda^-]$ is a degree 0 $KR$-homology class because $T\mathcal{C}^-_\lambda$ is a Real $(p, p)$-orientable vector bundle for some $p$ (cf. the anti-holomorphicity of the map $-\sigma_{\mathfrak{g}^*}$ on the coadjoint orbit $\mathcal{O}_\lambda$ proven in \cite[Proof of Theorem 4.8]{OS}). Its pushforward is of degree 0 or $-4$ according as $\mathcal{E}$ is Real or Quaternionic, i.e. $(k_0^2)^\lambda=1$ or $-1$.} (cf. \cite[Theorem 4.13]{M1}). For $\mu\in P$, $r(V_\mu)$ can be similarly realized as $\iota_{\mathcal{C}_\mu\cup\mathcal{C}_{\sigma_+(\mu)}*}([\mathcal{C}_\mu\cup\mathcal{C}_{\sigma_+(\mu)}])$ (note that the inherited Real structure on $\mathcal{C}_\mu\cup\mathcal{C}_{\sigma_+(\mu)}$ swaps the two components). 
\end{remark}
\subsection{A modified Pontryagin product on $KR^G_*(G^-, \mathcal{A}^{k+\textsf{h}^\vee})$}\label{modifiedPont}
Though the ring structure of $K^G_*(G, \mathcal{A}^{k+\textsf{h}^\vee})$ as in FHT is induced by the Pontryagin product, any possible ring structure of the Real version $KR^G_*(G^-, \mathcal{A}^{k+\textsf{h}^\vee})$ cannot be defined in this way because the group multiplication is not a Real map with respect to any group anti-involution. The situation can be remedied if we give $G\times G$ the following Real structure, which is inspired by the braid isomorphisms of quasi-Hamiltonian manifolds (\cite[Theorem 6.2]{AMM}).
\begin{definition}
	Let $\text{Ad}: G\times G\to G$ be the conjugation map $(g_1, g_2)\mapsto g_1g_2g_1^{-1}$ and $G^-\times G_\text{Ad}^-$ be the Real $G$-space $G\times G$ equipped with the involution
	\[(g_1, g_2)\mapsto (a_G(g_1), (a_G(\text{Ad}(g_1, g_2)))).\]
	and $G$-action by conjugation diagonally. 
\end{definition}
It can be easily checked that the group multiplication $m: G^-\times G^-_\text{Ad}\to G^-$ is a Real $G$-map. 
\begin{proposition}
The Real equivariant cohomology $H_{G\rtimes\Gamma}^3(G^-\times G^-_\text{Ad}, \mathbb{R}_\Gamma)$ is isomorphic to $\mathbb{R}^2$ and generated by $[\pi_1^*\eta_G]$ and $[\pi_2^*\eta_G]$, where $\pi_i: G\times G\to G$ is the projection onto the $i$-th factor.
\end{proposition}
\begin{proof}
	It suffices to show that both $[\pi_i^*\eta_G]$ are $\Gamma$-anti-invariant by Proposition \ref{antiinvcoh}. That $[\pi_1^*\eta_G]$ is $\Gamma$-anti-invariant can be proved as in the proof of Lemma \ref{invgamma}. As to the $\Gamma$-anti-invariance of $[\pi_2^*\eta_G]$, we first observe that
	\begin{align*}
		-(\gamma^*\pi_2^*\eta_G)(\xi)&=-(a_G\circ\text{Ad})^*(\eta_G(\sigma_G(\xi)))\\
							      &=-\text{Ad}^*(a_G)^*(\eta_G(\sigma_G(\xi)))\\
							      &=\text{Ad}^*(\eta_G(\xi))\ \ (\text{See the proof of Lemma \ref{invgamma}})\\
							      &=(\text{Ad}^*\eta_G)(\xi)
	\end{align*}
	It follows that $-\gamma^*[\pi_2^*\eta_G]=\text{Ad}^*[\eta_G]$. Expressing $\text{Ad}$ as the composition of the maps 
	\begin{align*}
		d: G\times G&\to G\times G\times G\\
		(g_1, g_2)&\mapsto (g_1, g_2, g_1^{-1})
	\end{align*}
	and the multiplication map $m: G\times G\times G\to G$, we have
	\begin{align*}
		\text{Ad}^*[\eta_G]&=d^*(m^*[\eta_G])\\
						&=d^*(\pi_1^*[\eta_G]+\pi_2^*[\eta_G]+\pi_3^*[\eta_G])\\
						&=(\pi_1\circ d)^*[\eta_G]+(\pi_2\circ d)^*[\eta_G]+(\pi_3^*\circ d)^*[\eta_G]\\
						&=\pi_1^*[\eta_G]+\pi_2^*[\eta_G]+\pi_1^*(\text{inv}^*[\eta_G])\\
						&=\pi_1^*[\eta_G]+\pi_2^*[\eta_G]-\pi_1^*[\eta_G]\\
						&=[\pi_2^*\eta_G].
	\end{align*}
\end{proof}
Since the pullback map $m^*: H^3_{G\rtimes\Gamma}(G^-, \mathbb{Z}_\Gamma)\to H^3_{G\rtimes\Gamma}(G^-\times G^-_\text{Ad}, \mathbb{Z}_\Gamma)$ takes $[\eta_G]$ to $[\pi_1^*\eta_G]+[\pi_2^*\eta_G]$, the Real DD bundle $m^*\mathcal{A}^{k+\textsf{h}^\vee}$ is Morita isomorphic to $\pi_1^*\mathcal{A}^{k+\textsf{h}^\vee}\otimes\pi_2^*\mathcal{A}^{k+\textsf{h}^\vee}$. Moreover, following the spectral sequence argument as in the proof of Proposition \ref{RealcohLie}, we have that $H^2_{G\rtimes\Gamma}(G^-\times G^-_\text{Ad}, \mathbb{Z}_\Gamma)=0$, so the pushforward map
\[m_*: KR^G_*(G^-\times G^-_\text{Ad}, \pi_1^*\mathcal{A}^{k+\textsf{h}^\vee}\otimes\pi_2^*\mathcal{A}^{k+\textsf{h}^\vee})\to KR^G_*(G^-, \mathcal{A}^{k+\textsf{h}^\vee})\]
is well-defined (cf. (\ref{canonicalmorita}) in Section \ref{twistedkrhomology}). We may now define the modified Pontraygin product on $KR^G_*(G^-, \mathcal{A}^{k+\textsf{h}^\vee})$ as the composition of the cross product
\[KR^G_*(G^-, \mathcal{A}^{k+\textsf{h}^\vee})\otimes KR^G_*(G^-, \mathcal{A}^{k+\textsf{h}^\vee})\to KR^G_*(G^-\times G^-_\text{Ad}, \pi_1^*\mathcal{A}^{k+\textsf{h}^\vee}\otimes\pi_2^*\mathcal{A}^{k+\textsf{h}^\vee})\]
and $m_*$. The cross product, more precisely, can be defined in this situation on the module generators $[V_\lambda]$ and $r(V_\mu)$ as the pushforward of the fundamental class of the product of relevant conjugacy classes (see Remark \ref{geometriccycles}). For example, the cross product of $[V'_{\lambda_1}]$ and $[V'_{\lambda_2}]$ is the image of the fundamental class under the map $\iota_{\mathcal{C}^-_{\lambda_1}\times\mathcal{C}^-_{\lambda_2, \text{Ad}}*}: KR^G_*(\mathcal{C}^-_{\lambda_1}\times\mathcal{C}^-_{\lambda_2, \text{Ad}}, \text{Cl}(T\mathcal{C}^-_{\lambda_1}\otimes T\mathcal{C}^-_{\lambda_2, \text{Ad}}))\to KR^G_*(G^-\times G^-_\text{Ad}, \pi_1^*\mathcal{A}^{k+\textsf{h}^\vee}\otimes\pi_2^*\mathcal{A}^{k+\textsf{h}^\vee})$.

\subsection{The Real Freed-Hopkins-Teleman Theorem}
\begin{definition}
	Let $RR_k(G)$ be the level $k$ Real Verlinde algebra, the Grothendieck group of isomorphism classes of level $k$ Real positive energy representations with respect to the loop group involution $\sigma_{LG}$ with the ring structure inherited from the fusion product structure of $R_k(G)$.
\end{definition}
\begin{theorem}\label{mainthm0}
	The modified Pontryagin product structure of $KR^G_*(G^-, \mathcal{A}^{k+\textsf{h}^\vee})$ (see Section \ref{modifiedPont}) on its $KR_*(\text{pt})$-module generators in Corollary \ref{realFHTstrthm} (i.e. $[V'_\lambda]$ for $\lambda\in\Lambda_k^*$ and $\sigma_+(\lambda)=\lambda$, and $r([V_\nu]\otimes\beta^i)$ for $\nu\in P$) is inherited from the fusion product structure of $R_k(G)$. In particular, the Real Verlinde algebra $RR_k(G)$ is ring isomorphic to $KR_0^G(G^-, \mathcal{A}^{k+\textsf{h}^\vee})$. 
\end{theorem}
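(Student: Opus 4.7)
The approach is to reduce the computation of the Real Pontryagin product to the complex fusion product via the forgetful map $c\colon KR^G_*(G^-,\mathcal{A}^{k+\textsf{h}^\vee})\to K^G_*(G,\mathcal{A}^{k+\textsf{h}^\vee})$ and the realification map $r$ from Corollary \ref{realFHTstrthm}. Two preliminary properties do most of the work. First, $c$ is a ring homomorphism for the Pontryagin products on both sides: the crossed product and the pushforward by $m\colon G^-\times G^-\to G^-$ are defined at the Real level from equivariant Real Hilbert-space bundles, and forgetting the Real structure recovers exactly the complex constructions underlying FHT. Second, $r$ obeys a Frobenius identity $r(x)\cdot y = r\bigl(x\cdot c(y)\bigr)$ for $x$ in the two-element-orbit summand of Theorem \ref{strthm} and $y\in KR^G_*(G^-,\mathcal{A}^{k+\textsf{h}^\vee})$; this follows from the definition $r(x)=x+\overline{\sigma}_G^*\sigma_{G^-*}x$ together with the $\Gamma$-equivariance of the crossed product and of $m_*$.

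Using these, I would compute the products of the three kinds of generators. For $[V'_\lambda]\cdot[V'_\mu]$ with $\sigma_+(\lambda)=\lambda$ and $\sigma_+(\mu)=\mu$, applying $c$ and invoking FHT gives $\sum_\gamma c_{\lambda\mu}^\gamma[V_\gamma]$; by Lemma \ref{preservlevelk} this sum is $\sigma_+$-invariant and therefore splits as $\sum_{\sigma_+(\gamma)=\gamma}c_{\lambda\mu}^\gamma[V_\gamma]+\sum_{\nu\in P}c_{\lambda\mu}^\nu\bigl([V_\nu]+[V_{\sigma_+(\nu)}]\bigr)$. Since $c\circ r$ carries $[V_\nu]$ to $[V_\nu]+[V_{\sigma_+(\nu)}]$, the structure isomorphism of Corollary \ref{realFHTstrthm} lifts this expression uniquely to a $KR_*(\text{pt})$-linear combination of $[V'_\gamma]$'s and $r([V_\nu])$'s, reading off the Real fusion coefficients directly from the $c_{\lambda\mu}^\gamma$. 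Products involving realified generators are then handled by the Frobenius identity, which reduces $r([V_\nu])\cdot y$ to a realified version of a product of the previous type.

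The degree-$0$ ring isomorphism with $RR_k(G)$ is then obtained by matching generators: a real-type $V'_\lambda$ (when $(k_0^2)^\lambda=1$) already lies in degree $0$ and corresponds to the underlying Real positive energy representation $V_\lambda$; a quaternionic-type $V'_\lambda$ sits in degree $-4$ and, after multiplication by the generator of $KR_{-4}(\text{pt})$, yields the degree-$0$ class representing the quaternionic positive energy representation; and each orbit $\{\nu,\sigma_+(\nu)\}$ contributes $r([V_\nu])\in KR_0^G$ representing the complex-type $V_\nu\oplus V_{\sigma_+(\nu)}$. By Corollary \ref{realFHTstrthm} this is a bijection of generators, and since the product on $RR_k(G)$ is by definition inherited from that of $R_k(G)$, the preceding product computations exhibit it as a ring isomorphism. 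The main obstacle will be establishing the Frobenius identity for $r$ cleanly in the twisted equivariant setting and carefully tracking the Bott-periodicity shifts when quaternionic classes enter a product, so that the resulting decomposition into real, quaternionic, and complex summands matches the Frobenius--Schur types predicted by FHT.
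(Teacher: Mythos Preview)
Your proposal is correct and follows essentially the same strategy as the paper: both arguments rely on the ring-homomorphism property of the forgetful map $c$, the Frobenius-type identity $r(x\cdot c(y))=r(x)\cdot y$ (Proposition~\ref{krprelim}(\ref{rcmix})), and the structure isomorphism of Corollary~\ref{realFHTstrthm} to pin down products of fixed-weight generators; the degree-$0$ isomorphism with $RR_k(G)$ is obtained in both cases by the same matching of real, quaternionic and complex-type generators. The paper's proof differs only in presentation: it carries out an explicit case analysis (e.g.\ degree~$0$ times degree~$-4$, etc.) and invokes the injectivity of $c$ on $KR_0^G\oplus KR_{-4}^G$ directly rather than phrasing it as a unique lift, but the underlying mechanism is the same as yours.
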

\begin{proof}
We first prove that $RR_k(G)\cong KR_0^G(G^-, \mathcal{A}^{k+\textsf{h}^\vee})$. Similar to the Real representation ring $RR(G)$ (cf. Definition \ref{RRRing}), the Real Verlinde algebra is freely generated, as an abelian group, by irreducible Real positive energy representations of real, complex and quaternionic types, i.e. 
\[RR_k(G)=\bigoplus_{\{\lambda\in\Lambda_k^*|\sigma_+(\lambda)=\lambda, (k_0^2)^\lambda=1\}}\mathbb{Z}[V'_\lambda]\oplus\bigoplus_{\{\gamma\in\Lambda_k^*|\sigma_+(\gamma)=\gamma, (k_0^2)^\gamma=-1\}}\mathbb{Z}[V'_\gamma\oplus V'_\gamma]\oplus\bigoplus_{\nu\in P}r\left(\mathbb{Z}[V_\nu]\right)\] 
where by abuse of notation $V'_\lambda$ and $V'_\gamma\oplus V'_\gamma$ are irreducible Real positive energy representations of real and quaternionic types respectively (cf. Proposition \ref{RealQuatStr}(\ref{realtype}, \ref{quattype})). On the other hand, by Corollary \ref{realFHTstrthm}, the degree 0 piece $KR_0^G(G^-, \mathcal{A}^{k+\textsf{h}^\vee})$ is isomorphic to
\begin{align*}
	&\bigoplus_{\{\lambda\in\Lambda_k^*|\sigma_+(\lambda)=\lambda, (k_0^2)^\lambda=1\}}\mathbb{Z}[V'_\lambda]\oplus \bigoplus_{\{\gamma\in \Lambda_k^*|\sigma_+(\gamma)=\gamma, (k_0^2)^\gamma=-1\}}\mathbb{Z}[V'_\gamma]\otimes\mu\oplus\bigoplus_{\nu\in P}r\left(\mathbb{Z}[V_\nu]\right)\\
	\cong&RR_k(G)\ (\text{note that }[V'_\gamma]\otimes\mu=[V'_\gamma\oplus V'_\gamma] \text{ by Proposition \ref{KR_G}})
\end{align*}
as abelian groups. The forgetful map $c: KR_0^G(G^-, \mathcal{A}^{k+\textsf{h}^\vee})\to K^G_0(G, \mathcal{A}^{k+\textsf{h}^\vee})$ can be easily seen to be injective. Moreover, $c$ is a ring homomorphism because both the cross product and $m_*$ commute with the various forgetful maps from $KR$-homology to $K$-homology. So the ring structure of $KR_0^G(G^-, \mathcal{A}^{k+\textsf{h}^\vee})$ is also inherited from the fusion product structure of $K^G_0(G, \mathcal{A}^{k+\textsf{h}^\vee})\cong R_k(G)$. We have shown that $RR_k(G)$ and $KR^G_0(G^-, \mathcal{A}^{k+\textsf{h}^\vee})$ are ring isomorphic.

More generally, we shall show that the Pontrayagin product structure of $KR^G_*(G^-, \mathcal{A}^{k+\textsf{h}^\vee})$ among the $KR_*(\text{pt})$-module generators, namely, $\{[V'_\lambda]|\lambda\in\Lambda_k^*, \sigma_+(\lambda)=\lambda\}$ and $\{r([V_\nu]\otimes\beta^i|\nu\in P, 0\leq i\leq 3)\}$ is inherited from that of $K^G_*(G, \mathcal{A}^{k+\textsf{h}^\vee})$ (i.e. the fusion product structure of $R_k(G)$). It suffices to show the following cases.
\begin{enumerate}
	\item Let $[V'_\lambda]\in KR^G_0(G^-, \mathcal{A}^{k+\textsf{h}^\vee})$ and $[V'_\gamma]\in KR^G_{-4}(G^-, \mathcal{A}^{k+\textsf{h}^\vee})$. Then 
	\begin{align*}
		c([V'_\lambda]\cdot[V'_\gamma])&=[V_\lambda]\cdot[V_\gamma]\\
								 &=\sum_{\{\nu\in\Lambda_k^*|\sigma_+(\nu)=\nu\}}c_{\lambda\gamma}^\nu[V_\nu]+\sum_{\nu\in P}c_{\lambda\gamma}^\nu[V_\nu\oplus V_{\sigma_+(\nu)}]\\
								 &=\sum_{\{\nu\in\Lambda_k^*|\sigma_+(\nu)=\nu, (k_0^2)^\nu=1\}}\frac{c_{\lambda\gamma}^\nu}{2}c([V'_\nu]\otimes\mu)+\sum_{\{\nu\in\Lambda_k^*|\sigma_+(\nu)=\nu, (k_0^2)^\nu=-1\}}c_{\lambda\gamma}^\nu c([V_\nu])\\
								 &+\sum_{\nu\in P}c_{\lambda\gamma}^\nu c(r([V_\nu]\otimes\beta^2))\ (\text{cf. Proposition \ref{KR_G} and note that }[V'_\nu]\otimes\mu=[V'_\nu\oplus V'_\nu])
	\end{align*}
	It follows that $\displaystyle [V'_\lambda]\cdot[V'_\gamma]$ and $\displaystyle \sum_{\{\nu\in\Lambda_k^*|\sigma_+(\nu)=\nu, (k_0^2)^\nu=1\}}\frac{c_{\lambda\gamma}^\nu}{2}[V'_\nu]\otimes\mu+\sum_{\{\nu\in\Lambda_k^*|\sigma_+(\nu)=\nu, (k_0^2)^\nu=-1\}}c_{\lambda\gamma}^\nu [V_\nu]+\sum_{\nu\in P}c_{\lambda\gamma}^\nu r([V_\nu]\otimes\beta^2)$ differ by an element in the kernel of $c$, but using Corollary \ref{realFHTstrthm} $c$ is injective on $KR^G_{-4}(G^-, \mathcal{A}^{k+\textsf{h}^\vee})$ so in fact they are equal. The cases where both $[V'_\lambda]$ and $[V'_\gamma]$ are in $KR_0^G(G^-, \mathcal{A}^{k+\textsf{h}^\vee})$ or $KR_{-4}^G(G^-, \mathcal{A}^{k+\textsf{h}^\vee})$ can be dealt with similarly.
	\item For $[V'_\lambda]\in KR^G_{-4}(G^-, \mathcal{A}^{k+\textsf{h}^\vee})$ and $\nu\in P$ one can compute the product $[V'_\nu]\cdot r([V_\nu]\otimes\beta^i)$ as follows.
	\begin{align*}
		&[V'_\lambda]\cdot r([V_\nu]\otimes\beta^i)\\
		=&r(c([V'_\lambda])\cdot [V_\nu]\otimes\beta^i)\ (\text{by Proposition \ref{krprelim} (\ref{rcmix})})\\
		=&r([V_\lambda]\cdot[V_\nu]\otimes\beta^i)\\
		=&\sum_{\{\gamma\in\Lambda_k^*|\sigma_+(\gamma)=\gamma, (k_0^2)^\gamma=-1\}}c_{\lambda\nu}^{\gamma}r([V_\gamma]\otimes\beta^i)+\sum_{\{\gamma\in\Lambda_k^*|\sigma_+(\gamma)=\gamma, (k_0^2)^\gamma=1\}}c_{\lambda\nu}^\gamma r([V_\gamma]\otimes\beta^{i+2})\\
		&+\sum_{\gamma\in P}c_{\lambda\nu}^\gamma r([V_\gamma\oplus V_{\sigma_+(\gamma)}]\otimes\beta^i)\\
		=&\sum_{\{\gamma\in\Lambda_k^*|\sigma_+(\gamma)=\gamma, (k_0^2)^\gamma=-1\}}c_{\lambda\nu}^\gamma r(c([V'_\gamma])\otimes\beta^i)+\sum_{\{\gamma\in\Lambda_k^*|\sigma_+(\gamma)=\gamma, (k_0^2)^\gamma=1\}}c_{\gamma\nu}^\gamma r(c([V'_\gamma])\otimes\beta^{i+2})\\
		&+\sum_{\gamma\in P} c_{\lambda\nu}^\gamma r(c(r([V_\gamma]\otimes\beta^2))\otimes\beta^i)\\
		=&\sum_{\{\gamma\in\Lambda_k^*|\sigma_+(\gamma)=\gamma, (k_0^2)^\gamma=-1\}}c_{\lambda\nu}^\gamma[V'_\gamma]\cdot r(\beta^i)+\sum_{\{\gamma\in\Lambda_k^*|\sigma_+(\gamma)=\gamma, (k_0^2)^\gamma=1\}}c_{\lambda\nu}^\gamma[V'_\gamma]\cdot r(\beta^{i+2})+\\
		&\sum_{\gamma\in P} c_{\lambda\nu}^\gamma r([V_\gamma]\otimes\beta^2)\cdot r(\beta^i).
	\end{align*}
	The products $[V'_\lambda]\cdot r([V_\nu]\otimes\beta^i)$ for $[V'_\lambda]\in KR^G_0(G^-, \mathcal{A}^{k+\textsf{h}^\vee})$ and $r([V_{\nu_1}]\otimes\beta^i)\cdot r([V_{\nu_2}]\otimes\beta^j)$ can be similarly computed using the Propositions \ref{krprelim} (\ref{rcmix}), \ref{KR_G} and the fusion product structure of $R_k(G)$. This finishes the proof.
\end{enumerate} 
\end{proof}
We would also like to describe the ring structure of $KR^G_*(G^-, \mathcal{A}^{k+\textsf{h}^\vee})$ by expressing it as the quotient of the coefficient ring $KR^G_*(\text{pt})$ by an ideal which we term \emph{the level k Real Verlinde ideal}. First we have the 
\begin{lemma}\label{I_kinvt}
	We have $\overline{\sigma_G^*}I_k=I_k$.
\end{lemma}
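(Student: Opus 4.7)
The plan is to identify the action of $\overline{\sigma_G^*}$ on $R(G)$, viewed as class functions on $G$, as precomposition with the anti-involution $a_G=\sigma_G\circ\text{inv}$, and then show that $a_G$ permutes the finite set $S_k:=\{\exp_T(B^\sharp((\lambda+\rho)/(k+\textsf{h}^\vee)))\mid\lambda\in\Lambda_k^*\}$ up to $G$-conjugation.

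First I would use that characters of unitary representations satisfy $\overline{\chi_V(g)}=\chi_V(g^{-1})$ to conclude
\[\overline{\sigma_G^*}(f)(g)=\overline{f(\sigma_G(g))}=f(\sigma_G(g)^{-1})=f(a_G(g))\]
for any $f\in R(G)$. Since $I_k$ is by definition the vanishing ideal of $S_k$, it follows that $\overline{\sigma_G^*}(I_k)$ is the vanishing ideal of $a_G(S_k)$. As characters are class functions, the lemma will follow once I show that $a_G(S_k)$ and $S_k$ produce the same $G$-conjugacy classes, because the two ideals will then coincide and involutivity of $\overline{\sigma_G^*}$ forces equality.

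Working with the distinguished maximal torus $T$ of Section \ref{distingtor} so that $\sigma_G$ preserves $T$, I would compute, for $\xi\in\mathfrak{t}^*$,
\[a_G(\exp_T(B^\sharp(\xi)))=\sigma_G(\exp_T(-B^\sharp(\xi)))=\exp_T(B^\sharp(-\sigma_{\mathfrak{t}^*}(\xi)))=\exp_T(B^\sharp(w_0'\sigma_+(\xi))),\]
using the $\sigma_G$-invariance of $B$ together with the rearrangement $-\sigma_{\mathfrak{t}^*}(\xi)=w_0'\sigma_+(\xi)$ of the defining identity $\sigma_+(\xi)=-\sigma_{\mathfrak{t}^*}(w_0'\xi)$. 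Because $w_0'\in W'\subseteq W$ acts on $T$ through $N(T)$, the rightmost expression is $W$-conjugate, hence $G$-conjugate, to $\exp_T(B^\sharp(\sigma_+(\xi)))$. Specializing to $\xi=(\lambda+\rho)/(k+\textsf{h}^\vee)$ with $\lambda\in\Lambda_k^*$, I would invoke Proposition \ref{OsheaSjamaar}(\ref{sigmapreserv}) (so that $\sigma_+$ permutes $R_+$ and therefore fixes $\rho=\frac{1}{2}\sum_{\alpha\in R_+}\alpha$) together with Lemma \ref{preservlevelk} ($\sigma_+(\lambda)\in\Lambda_k^*$) to identify the image as being $G$-conjugate to
\[\exp_T\!\left(B^\sharp\!\left(\tfrac{\sigma_+(\lambda)+\rho}{k+\textsf{h}^\vee}\right)\right)\in S_k,\]
concluding the desired inclusion $a_G(S_k)\subseteq G\cdot S_k$.

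The delicate bookkeeping step is the simultaneous reconciliation of three pieces of data — the inversion built into $a_G$, the anti-linear involution $\sigma_{\mathfrak{t}^*}$ on the lattice, and the Weyl element $w_0'$ implicit in the definition of $\sigma_+$ — which collapses cleanly only when one uses $a_G$ rather than $\sigma_G$ directly, since the minus sign produced by inversion cancels the one appearing in the definition of $\sigma_+$ and leaves behind only a Weyl-group twist that is absorbed into $G$-conjugation. The use of the distinguished maximal torus from Section \ref{distingtor} is essential, since without it $\sigma_G$ need not preserve $T$ and the formula for $a_G(\exp_T(B^\sharp(\xi)))$ above would not make sense.
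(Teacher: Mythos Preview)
Your argument is correct and essentially identical to the paper's: both identify $\overline{\sigma_G^*}$ on characters with precomposition by $a_G$, then verify (up to $W$-conjugacy by $w_0'$) that $a_G$ sends the point indexed by $\lambda$ to the one indexed by $\sigma_+(\lambda)$, invoking $\sigma_+(\rho)=\rho$ and Lemma~\ref{preservlevelk}. The only step you glossed over is that your ``rearrangement'' $-\sigma_{\mathfrak{t}^*}(\xi)=w_0'\sigma_+(\xi)$ is not a formal consequence of the defining identity alone but also uses $w_0'^2=1$ and that $w_0'$ commutes with $\sigma_{\mathfrak{t}^*}$ (the latter because $k_0\in K$ is $\sigma_G$-fixed); the paper sidesteps this by conjugating by $k_0$ \emph{before} unwinding the character, landing directly on $-\sigma_{\mathfrak{g}^*}(w_0'(\lambda+\rho))=\sigma_+(\lambda+\rho)$.
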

\begin{proof}
	Recall that $I_k$ is the vanishing ideal in $R(G)$ (viewed as a character ring) of the set
	\[\left\{\left.\text{exp}_T\left(B^\sharp\left(\frac{\lambda+\rho}{k+\textsf{h}^\vee}\right)\right)\right| \lambda\in \Lambda_k^*\right\}.\]
	Suppose that $\chi_V\in I_k$. For $\lambda\in\Lambda_k^*$, we have
	\begin{align*}
		&\chi_{\overline{\sigma_G^*}V}\left(\text{exp}_T\left(B^\sharp\left(\frac{\lambda+\rho}{k+\textsf{h}^\vee}\right)\right)\right)\\
		=&\chi_{\overline{\sigma_G^*}V}\left(k_0\text{exp}_T\left(B^\sharp\left(\frac{\lambda+\rho}{k+\textsf{h}^\vee}\right)\right)k_0^{-1}\right)\\
		=&\chi_{\overline{\sigma_G^*}V}\left(\text{exp}_T\left(B^\sharp\left(\frac{w'_0(\lambda+\rho)}{k+\textsf{h}^\vee}\right)\right)\right)\\
		=&\chi_V\left(\text{exp}_T\left(B^\sharp\left(\frac{-\sigma_{\mathfrak{g}^*}(w'_0(\lambda+\rho))}{k+\textsf{h}^\vee}\right)\right)\right)\\
		=&\chi_V\left(\text{exp}_T\left(B^\sharp\left(\frac{\sigma_+(\lambda)+\rho}{k+\textsf{h}^\vee}\right)\right)\right)\ \ (\sigma_+(\rho)=\rho\text{ by Proposition }\ref{OsheaSjamaar} (\ref{sigmapreserv}))\\
		=&0\ \ (\sigma_+(\lambda)\text{ is also a level }k\text{ weight by Lemma }\ref{preservlevelk}).
	\end{align*}
	This shows that $\overline{\sigma_G^*}I_k\subseteq I_k$. Applying $\overline{\sigma_G^*}$ to both sides yields the reverse inclusion, and hence the equality.
\end{proof}
\begin{definition}
	Let $RI_k$ be the kernel of the map $\iota_*^\mathbb{R}: KR^G_*(\text{pt})\to KR^G_*(G^-, \mathcal{A}^{k+\textsf{h}^\vee})$. We call $RI_k$ the \emph{level k Real Verlinde ideal}.
\end{definition}
\begin{theorem}\label{mainthm}
	\begin{enumerate}
		\item\label{generalVerlindeIdeal} Let $G$ be a simple, simply-connected compact Lie group. Let $\mathcal{A}$ be the Real fundamental DD bundle over $G^-$ constructed in Section \ref{equivrealfundDD}. Furthermore, by abuse of notation, let 
		\[c: \widetilde{R}:=RR(G, \mathbb{R})\oplus RH(G, \mathbb{R})\oplus RR(G, \mathbb{C})\to R(G)\]
		be the map forgetting Real or Quaternionic structures. 
	Then the pushforward map 
	\[\iota_*^\mathbb{R}: KR^G_*(\text{pt})\to KR^G_*(G^-, \mathcal{A}^{k+\textsf{h}^\vee})\]
	is onto and its kernel, $RI_k$, is the ideal in $KR^G_*(\text{pt})$ defined by
	\[(c^{-1}(I_k), r(I_k\otimes\beta^i), i=1, 3).\]
		\item\label{VerlindeIdealGenerators} Let $I_k=(\rho_1, \cdots, \rho_n, \rho_{n+1}, \cdots, \rho_m)\subseteq R(G)$, where $\rho_j\cong \sigma_G^*\overline{\rho}_j$ for $1\leq j\leq n$, and $\rho_l\ncong\sigma_G^*\overline{\rho}_l$ for $n+1\leq l\leq m$. Let $\{\omega_1, \cdots, \omega_{2p}, \omega_{2p+1}, \cdots, \omega_q\}$ be the set of fundamental representations such that $\sigma_G^*\overline{\omega}_i\cong \omega_{p+i}$ for $1\leq i\leq p$, $\sigma_G^*\overline{\omega}_j\cong\omega_j$ for $2p+1\leq j\leq q$.\footnote{By \cite[Lemma 5.5]{Se}, $\overline{\sigma_G^*}$ permutes the set of fundamental representations.} Let 
		\[S=\{1\}\cup\{\text{square-free monomials }\alpha=\omega_{i_1}\omega_{i_2}\cdots\omega_{i_d}\text{ such that }\alpha\text{ and }\sigma_G^*\overline{\alpha}\text{ are coprime}\}.\]
		Then we have
		\begin{align*}
			&RI_k\\
			=&(c^{-1}(\rho_j), c^{-1}(\rho_l\tau+\sigma_G^*(\overline{\rho}_l\overline{\tau})), r(\rho_l\chi\otimes\beta^i),\ \text{for }\tau, \chi\in S,\ i=1, 3,\ 1\leq j\leq n,\ n+1\leq l\leq m)
		\end{align*}
		In particular, if $RR(G, \mathbb{C})=0$, then $c$ is a ring isomorphism and $RI_k=(c^{-1}(I_k))=(c^{-1}(\rho_j), 1\leq j\leq n)$.
	\end{enumerate}
\end{theorem}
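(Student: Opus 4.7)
The plan is to first establish Part (\ref{generalVerlindeIdeal}) — surjectivity of $\iota_*^{\mathbb{R}}$ and the identification $RI_k = J$ with $J := (c^{-1}(I_k),\, r(I_k\otimes\beta^i)\text{ for }i=1,3)$ — and then to unpack $J$ in terms of the given generators to obtain Part (\ref{VerlindeIdealGenerators}). Surjectivity of $\iota_*^{\mathbb{R}}$ falls out of Corollary \ref{realFHTstrthm}: each listed $KR_*(\text{pt})$-module generator, namely $[V'_\lambda]$ for $\sigma_+$-fixed $\lambda\in\Lambda_k^*$ and $r([V_\nu]\otimes\beta^i)$ for $\nu\in P$, is the image under $\iota_*^{\mathbb{R}}$ of the evident element of $KR^G_*(\text{pt})$ via the commutative square (\ref{commforget}) and naturality of $r$.

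For the inclusion $J\subseteq RI_k$ the two families of generators are handled separately. Naturality of $r$ and the classical FHT (Theorem \ref{FHT}) give $\iota_*^{\mathbb{R}}(r(x\otimes\beta^i))=r(\iota_*(x)\otimes\beta^i)=0$ for any $x\in I_k$. For $y\in c^{-1}(I_k)\subseteq\widetilde{R}$, the relation $c\circ\iota_*^{\mathbb{R}}=\iota_*\circ c$ forces $c(\iota_*^{\mathbb{R}}(y))=0$; since $\widetilde{R}$ lives in degrees $0$ and $-4$, and the explicit module decomposition of Corollary \ref{realFHTstrthm} shows that $c:KR^G_d(G^-,\mathcal{A}^{k+\textsf{h}^\vee})\to K^G_d(G,\mathcal{A}^{k+\textsf{h}^\vee})$ is injective for $d\in\{0,-4\}$ (extending the degree-zero injectivity argument used in the proof of Theorem \ref{mainthm0}), this forces $\iota_*^{\mathbb{R}}(y)=0$.

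The reverse inclusion $RI_k\subseteq J$ is the main obstacle. The approach is to apply the structure theorem (Corollary \ref{strthmhomology}) to both $\text{pt}$ and $(G^-,\mathcal{A}^{k+\textsf{h}^\vee})$, obtaining compatible decompositions $F\oplus r(K_*(+)\otimes T)$ on each side under the $KR_*(\text{pt})$-linear map $\iota_*^{\mathbb{R}}$; one then shows the induced quotient map $KR^G_*(\text{pt})/J\to KR^G_*(G^-,\mathcal{A}^{k+\textsf{h}^\vee})$ is injective by reducing summand-by-summand to the classical FHT isomorphism $R(G)/I_k\cong R_k(G)$. Carefully bookkeeping the real, quaternionic, and complex types of the $\sigma_+$-fixed weights (via Proposition \ref{OsheaSjamaar}(\ref{RRRRHR})) is the delicate point; in particular, one must verify that the even-power realification terms $r(I_k\otimes\beta^{2j})$ are already absorbed into $c^{-1}(I_k)$ through the relation $c(r(x\otimes\beta^{2j}))=\beta^{2j}(x+\overline{\sigma}_G^*x)$, while $\beta$ itself is not in the image of $c$, which is precisely why only the odd exponents $i=1,3$ appear as independent generators in $J$.

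For Part (\ref{VerlindeIdealGenerators}), it remains to rewrite $J$ in terms of the given generators $\rho_1,\dots,\rho_m$ of $I_k$ and the fundamental representations. By Lemma \ref{I_kinvt}, $c^{-1}(I_k)$ corresponds exactly to the $\overline{\sigma}_G^*$-invariant part of $I_k$, lifted to $\widetilde{R}$ according to Frobenius-Schur type. Writing a typical invariant element of $I_k$ as $\sum a_j\rho_j+\sum(b_l\rho_l+\overline{\sigma}_G^*(b_l\rho_l))$ with $a_j$ self-conjugate and $b_l\in R(G)$, and using that $R(G)$ is a polynomial algebra on the fundamental representations with the $\overline{\sigma}_G^*$-action partitioned as stated, one reduces each $b_l$ modulo self-conjugate monomials to an element of $S$, yielding the stated generating set for $c^{-1}(I_k)$; the same reduction applied to the realification pieces produces the $r(\rho_l\chi\otimes\beta^i)$-generators for $i=1,3$. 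In the special case $RR(G,\mathbb{C})=0$ every fundamental representation is self-conjugate, so $S=\{1\}$, all $\rho_l$-terms collapse, and $c$ becomes a ring isomorphism $\widetilde{R}\to R(G)$, yielding the simplified description $RI_k=(c^{-1}(\rho_j),\,1\le j\le n)$.
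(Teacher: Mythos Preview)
Your proposal is correct and follows essentially the same route as the paper: surjectivity via Corollary \ref{realFHTstrthm}, the inclusion $J\subseteq RI_k$ via the two commutative squares relating $\iota_*^{\mathbb{R}}$ with $c$ and $r$, and the reverse inclusion by decomposing a general element of $KR^G_*(\text{pt})$ according to the structure theorem and reducing piecewise to classical FHT. The only place where you will need more than what you wrote is Part~(\ref{VerlindeIdealGenerators}): the paper carries out the reduction of a general coefficient $b_l$ to combinations indexed by $S$ by an explicit downward induction on the exponents in each monomial (peeling off one fundamental-representation factor at a time via an identity of the shape $c^{-1}(\rho_l\omega_{i_1}^{\nu}\cdots+\overline{\sigma}_G^*(\cdots))=c^{-1}(\rho_l\omega_{i_1}+\overline{\sigma}_G^*(\rho_l\omega_{i_1}))\cdot c^{-1}(\cdots)-c^{-1}(\cdots)\cdot c^{-1}(\omega_{i_1}\omega_{\sigma(i_1)})$), which your phrase ``reduces each $b_l$ modulo self-conjugate monomials to an element of $S$'' gestures at but does not spell out.
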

\begin{proof}
	\begin{enumerate}
		\item  We have that $V'_\lambda$ and $r(V_\mu\otimes\beta^i)$, which are generators of $KR^G_*(G^-, \mathcal{A}^{k+\textsf{h}^\vee})$ as a $KR_*(\text{pt})$-module by Corollary \ref{realFHTstrthm}, are images of $W'_\lambda$ and $r(W_\mu\otimes\beta^i)$ under $\iota_*^\mathbb{R}$, where $W'_\lambda\in KR^G_*(\text{pt})$ is the Real or Quaternionic irreducible representation with highest weight $\lambda$, and $W_\mu\in K^G_*(\text{pt})$ the complex irreducible representation with highest weight $\mu$. Thus $\iota_*^\mathbb{R}$ is onto. 
	
	Next we shall show that $(c^{-1}(I_k), r(I_k)\otimes\beta^i, 1\leq i\leq 3)$ is indeed the kernel of $\iota^{\mathbb{R}}_*$. Suppose $\rho\in c^{-1}(I_k)\subseteq RR(G, \mathbb{R})\oplus RH(G, \mathbb{R})\oplus RR(G, \mathbb{C})\subseteq KR^G_0(\text{pt})\oplus KR^G_{-4}(\text{pt})$. Then by the commutative diagram (\ref{commforget}), 
	\[c\circ\iota^{\mathbb{R}}_*(\rho)=\iota_*\circ c(\rho)\in \iota_*I_k=\{0\}.\]
	So $\iota^\mathbb{R}_*(\rho)\in\text{ker}(c)$. Assume for the sake of contradiction that $\iota^{\mathbb{R}}_*(\rho)$ is not 0. On the one hand, by the Bott exact sequence (cf. \cite[Proposition 3.3]{At}) and Poincar\'e duality for twisted equivariant $KR$-theory, we have that $\iota^{\mathbb{R}}_*(\rho)$ is a multiple of $\eta$ and hence a nonzero 2-torsion element. On the other hand, according to Corollary \ref{realFHTstrthm}, $KR^G_0(G^-, \mathcal{A}^{k+\textsf{h}^\vee})\oplus KR^G_{-4}(G^-, \mathcal{A}^{k+\textsf{h}^\vee})$ is torsion-free and $\iota^{\mathbb{R}}_*(\rho)\in KR^G_0(G^-, \mathcal{A}^{k+\textsf{h}^\vee})\oplus KR^G_{-4}(G^-, \mathcal{A}^{k+\textsf{h}^\vee})$. 
	So $\iota^{\mathbb{R}}_*(\rho)$ cannot be a nonzero 2-torsion, leading to a contradiction. We have shown that $c^{-1}(I_k)\subseteq RI_k$.	
	
	Next, to show that $r(I_k\otimes\beta^i)\subseteq RI_k$, we first prove the commutativity of the following diagram.
	\begin{eqnarray}\label{rcommutesiota}
			\xymatrix{K^G_*(\text{pt})\ar[r]^{\iota_*}\ar[d]_r& K^G_*(G, \mathcal{A}^{k+\textsf{h}^\vee})\ar[d]^{r}\\ KR^G_*(\text{pt})\ar[r]^{\iota^\mathbb{R}_*}& KR^G_*(G^-, \mathcal{A}^{k+\textsf{h}^\vee})}
	\end{eqnarray}
	Let $x\in K^G_*(\text{pt})\cong R(G)$. Using the definition of $r$ given in Proposition \ref{krprelim} (\ref{realcomplex}), we have
	\begin{align*}
		r(\iota_*(\rho))&=\iota_*(x)+\sigma_G^*\circ\sigma_{G^-*}(\iota_*(\overline{x}))\\
		      		     &=\iota_*(x)+\sigma_G^*(\iota_*(\overline{x}))\ \ (\because a_G\circ\iota=\iota)\\
				     &=\iota_*(x)+\iota_*(\sigma_G^*\overline{x})\\
				     &=\iota_*^\mathbb{R}(x+\sigma_G^*(\overline{x}))\\
				     &=\iota_*^\mathbb{R}(r(x)).
	\end{align*}
	It follows that $\iota_*^\mathbb{R}(r(I_k\otimes\beta^i))=r(\iota_*(I_k\otimes\beta^i))=r(0)=0$ and thus $r(I_k\otimes\beta^i)\subseteq RI_k$. We get the inclusion $(c^{-1}(I_k), r(I_k\otimes\beta^i), 1\leq i\leq 3)\subseteq RI_k$. To prove the reverse inclusion, we first use the description of $KR^G_*(\text{pt})$ in Proposition \ref{KR_G} and let 
	\begin{align*}
		&\sum_{i=0}^2\chi_i\otimes \eta^i+\chi_3\otimes\mu+\sum_{j=0}^3r(\chi_{4, j}\otimes\beta^j)\in KR^G_*(\text{pt})\ \text{for }\chi_i\in RR(G, \mathbb{R})\oplus RH(G, \mathbb{R}),\\
		&0\leq i\leq 3\text{ and }\chi_{4, j}\in R(G, \mathbb{C})
	\end{align*}
	be in $RI_k$. We may rewrite the above expression as
	\begin{eqnarray}\label{genformKR_G}\chi_0+2\chi_3+\chi_{4, 0}+\sigma_G^*\overline{\chi}_{4, 0}+\chi_{4, 2}+\sigma_G^*\overline{\chi}_{4, 2}+\sum_{i=1}^2\chi_i\otimes\eta^i+\sum_{j=1, 3}r(\chi_{4, j}\otimes\beta^j)\end{eqnarray}
	by Proposition \ref{KR_G} (\ref{realquatexch}), where $2\chi_3, \chi_{4, 0}+\sigma_G^*\overline{\chi}_{4, 0}$ and $\chi_{4, 2}+\sigma_G^*\overline{\chi}_{4, 2}$ are equipped with suitable Real or Quaternionic structures as in Proposition \ref{RealQuatStr}. We may further assume, without loss of generality, that $\chi_{4, j}\ncong\sigma_{G}^*\overline{\chi}_{4, j}$ for $j=1, 3$, for if $\chi_{4, j}\cong\sigma_G^*\overline{\chi}_{4, j}$, then there is $\chi'_{4, j}\in RR(G)\oplus RH(G)$ such that $c(\chi'_{4, j})=\chi_{4, j}$, and by Proposition \ref{krprelim} (\ref{rcmix}), $r(\chi_{4, j}\otimes\beta^j)=r(c(\chi'_{4, j})\otimes\beta^j)=\chi'_{4, j}\otimes r(\beta^j)$, which can be assimilated into the 7th term of (\ref{genformKR_G}). Applying $\iota^\mathbb{R}_*$ to (\ref{genformKR_G}) and using the commutativity of the diagram (\ref{rcommutesiota}), we have
	\[\iota^\mathbb{R}_*(\chi_0+2\chi_3+\chi_{4, 0}+\sigma_G^*\overline{\chi}_{4, 0}+\chi_{4, 2}+\sigma_G^*\overline{\chi}_{4, 2})+\sum_{i=1}^2\iota^\mathbb{R}_*(\chi_i)\otimes\eta^i+\sum_{j=1, 3}r(\iota_*(\chi_{4, j})\otimes\beta^j)=0\]
	which holds if and only if 
	\[\iota^\mathbb{R}_*(\chi_0+2\chi_3+\chi_{4, 0}+\sigma_G^*\overline{\chi}_{4, 0}+\chi_{4, 2}+\sigma_G^*\overline{\chi}_{4, 2})=0,\ \iota^\mathbb{R}_*(\chi_i)=0,\ \iota_*(\chi_{4, j})=0,\ \text{for }1\leq i\leq 2,\ j=1, 3.\]
	It follows that $c(\chi_0+2\chi_3+\chi_{4, 0}+\sigma_G^*\overline{\chi}_{4, 0}+\chi_{4, 2}+\sigma_G^*\overline{\chi}_{4, 2})$ and $c(\chi_i)$, $1\leq i\leq 2$, are in $I_k$, and $\chi_{4, j}\in I_k$. We have completed the proof of the assertion that $RI_k=(c^{-1}(I_k), r(I_k\otimes\beta^i, i=1, 3))$.
		\item It suffices to show that the ideal described in item (\ref{generalVerlindeIdeal}) of Theorem \ref{mainthm} is in the ideal described in item (\ref{VerlindeIdealGenerators}), since the reverse inclusion is obvious. Note that any element in $I_k\subseteq R(G)$ is of the form
		\[\rho_1\tau_1+\cdots+\rho_m\tau_m\ \text{for }\tau_1, \cdots, \tau_m\in R(G).\]
		By Proposition \ref{KR_G}, any element in $c^{-1}(I_k)\subseteq \widetilde{R}$ is of the form
		\begin{align*}
			&c^{-1}(\rho_1)c^{-1}(\tau_1)+\cdots+c^{-1}(\rho_n)c^{-1}(\tau_n)+c^{-1}(\rho_{n+1}\tau_{n+1}+\sigma_G^*(\overline{\rho}_{n+1})\sigma_G^*(\overline{\tau}_{n+1}))\\
			&+\cdots+c^{-1}(\rho_m\tau_m+\sigma_G^*(\overline{\rho}_m)\sigma_G^*(\overline{\tau}_m))
		\end{align*}
		for $\tau_j\in R(G)^{\overline{\sigma_G^*}}$, $1\leq j\leq n$ and $\tau_l\in R(G)$, $n+1\leq l\leq m$ (note that by Lemma \ref{I_kinvt}, $\sigma_G^*(\overline{\rho_l})\in I_k$ since $\rho_l\in I_k$). The terms $c^{-1}(\rho_1)c^{-1}(\tau_1)+\cdots+c^{-1}(\rho_n)c^{-1}(\tau_n)$ lies in $(c^{-1}(\rho_1), \cdots, c^{-1}(\rho_n))$. As to the latter terms, say $c^{-1}(\rho_l\tau_l+\sigma_G^*(\overline{\rho}_l)\sigma_G^*(\overline{\tau}_l))$, we shall show that it is in $(c^{-1}(\rho_l\tau+\sigma_G^*(\overline{\rho}_l\overline{\tau})), n+1\leq l\leq m, \tau\in S)$. Since $G$ is simply-connected, $R(G)$ is isomorphic to $\mathbb{Z}[\omega_1, \cdots, \omega_q]$ and so $\tau$ can be written as a polynomial $P(\omega_1, \cdots, \omega_q)$. The monomial terms of $p(\omega_1, \cdots, \omega_q)$ can be classified into two types, namely, those which are invariant under $\overline{\sigma_G^*}$, and those which are not. If $\alpha$ is a monomial of the first type, then 
		\[c^{-1}(\rho_l\alpha+\sigma_G^*(\overline{\rho}_l\overline{\alpha}))=c^{-1}((\rho_l+\sigma_G^*\overline{\rho}_l)\alpha)\in (c^{-1}(\rho_l+\sigma_G^*\overline{\rho}_l)).\]
		If $\alpha$ is of the second type, then it is of the form $n\omega_{i_1}^{\nu_{i_1}}\cdots\omega_{i_d}^{\nu_{i_d}}\alpha'$ for some $n\in\mathbb{Z}$, where $\sigma_G^*\overline{\alpha'}\cong\alpha'$, no two of the indices $i_1, \cdots, i_d\in \{1, 2, \cdots, 2p\}$ differ by $p$, and $\nu_{i_1}, \cdots, \nu_{i_d}\geq 1$. Let $\sigma(i_j)=\begin{cases}i_j+p&\ \text{if }1\leq i_j\leq p\\ i_j-p&\ \text{if }p+1\leq i_j\leq 2p\end{cases}$. It suffices to show that $c^{-1}(\rho_l\omega_{i_1}^{\nu_{i_1}}\cdots\omega_{i_d}^{\nu_{i_d}}+\sigma_G^*\overline{\rho_l}\omega_{\sigma(i_1)}^{\nu_{i_1}}\cdots\omega_{\sigma(i_d)}^{\nu_{i_d}})\in (c^{-1}(\rho_l\tau+\sigma_G^*(\overline{\rho}_l\overline{\tau})),\ \tau\in S)$. If $\nu_{i_1}=\cdots=\nu_{i_d}=1$ then the assertion is obvious. If any one of $\nu_{i_1}, \cdots, \nu_{i_d}$ is greater than 1 (say $\nu_{i_1}>1$), then we write
		\begin{align}
			&c^{-1}(\rho_l\omega_{i_1}^{\nu_{i_1}}\cdots\omega_{i_d}^{\nu_{i_d}}+\sigma_G^*\overline{\rho}_l\omega_{\sigma(i_1)}^{\nu_1}\cdots\omega_{\sigma(i_d)}^{\nu_{i_d}}) \label{inversec}\\
			=&c^{-1}(\rho_l\omega_{i_1}+\sigma_G^*\overline{\rho}_l\omega_{\sigma(i_1)})c^{-1}(\omega_{i_1}^{\nu_{i_1}-1}\omega_{i_2}^{\nu_{i_2}}\cdots\omega_{i_d}^{\nu_{i_d}}+\omega_{\sigma(i_1)}^{\nu_{i_1}-1}\omega_{\sigma(i_2)}^{\nu_{(i_2)}}\cdots\omega_{\sigma(i_d)}^{\nu_{i_d}})-\nonumber\\
			&c^{-1}(\rho_l\omega_{\sigma(i_1)}^{\nu_{i_1}-2}\omega_{\sigma(i_2)}^{\nu_{i_2}}\cdots\omega_{\sigma(i_d)}^{\nu_{i_d}}+\sigma_G^*\overline{\rho}_l\omega_{i_1}^{\nu_{i_1}-2}\omega_{i_2}^{\nu_{i_2}}\cdots\omega_{i_d}^{\nu_{i_d}})c^{-1}(\omega_{i_1}\omega_{\sigma(i_1)}). \nonumber
		\end{align} 
		The first term lies in the ideal $(c^{-1}(\rho_l\tau+\sigma_G^*(\overline{\rho}_l\overline{\tau})),\ \tau\in S)$, so does the second term by inductive hypothesis on $\nu_{i_1}, \cdots, \nu_{i_d}$. The expresion (\ref{inversec}) indeed lies in $(c^{-1}(\rho_l\tau+\sigma_G^*(\overline{\rho}_l\overline{\tau})),\ \tau\in S)$. 
		
		It remains to show that $(r(I_k\otimes\beta^i),\ i=1, 3)$ lies in the ideal described in item (\ref{generalVerlindeIdeal}) of Theorem \ref{mainthm}. Any element of $r(I_k\otimes\beta^i)$ takes the form
		\[\sum_{\substack{i=1, 3\\ 1\leq k\leq m}}r(\rho_k\chi_k\otimes\beta^i)\text{ for }\chi_k\in R(G).\]
		For $1\leq j\leq n$, then there exists $\rho'_j\in \widetilde{R}$ such that $c(\rho'_j)=\rho_j$, and 
		\begin{align*}
			r(\rho_j\chi_j\otimes\beta^i)&=r(c(\rho'_j)\chi_j\otimes\beta^i)\\
								   &=\rho'_jr(\chi_j\otimes\beta^i)\ (\text{by Proposition \ref{krprelim} (\ref{rcmix})})\\
								   &\in (\rho'_j)\ (\text{or }(c^{-1}(\rho_j))).
		\end{align*}
		For the term $r(\rho_l\chi_l\otimes\beta^i)$ where $n+1\leq l\leq m$, one can prove that it lies in the ideal $(r(\rho_l\chi\otimes\beta^i),\ \chi\in S)$ by reprising the inductive argument used in the last paragraph. Again assume without loss of generality that $\chi=\omega_{i_1}^{\nu_{i_1}}\cdots\omega_{i_d}^{\nu_{i_d}}$ where no two of the indices $i_1, \cdots, i_d\in\{1, \cdots, 2p\}$ differ by $p$. Let $\nu_{i_1}>1$. The assertion then follows from the equation below.
		\begin{align*}
			&r(\rho_l\omega_{i_1}^{\nu_{i_1}}\cdots\omega_{i_d}^{\nu_{i_d}}\otimes\beta^i)\\
			=&r(\rho_l\omega_{i_1}^{\nu_{i_1}-1}\omega_{i_2}^{\nu_{i_2}}\cdots\omega_{i_d}^{\nu_{i_d}}\otimes\beta^i)(c^{-1}(\omega_{i_1}+\omega_{\sigma(i_1)}))-\\
			&-r(\rho_l\omega_{i_1}^{\nu_{i_1}-2}\omega_{i_2}^{\nu_{i_2}}\cdots\omega_{i_d}^{\nu_{i_d}}\otimes\beta^i)(c^{-1}(\omega_{i_1}\omega_{\sigma(i_1)}))\ (\text{by Proposition \ref{krprelim} (\ref{rcmix})})
		\end{align*}
	\end{enumerate}
\end{proof}

\begin{remark}\label{lastrmk}
	\begin{enumerate}
		\item In \cite{Dou}, Douglas gave an explicit description of the Verlinde algebra of $G$. In particular, he gave a list of generators of $I_k$ for each type of simple, connected, simply-connected and compact Lie groups in such a way that the number of generators is independent of the level $k$. Together with Theorem \ref{mainthm} and the description of the ring structure of the coefficient ring $KR^G_*(\text{pt})$ given in Proposition \ref{KR_G} (and which can also be deduced from Corollary \ref{strthmhomology}), one can obtain the ring structure of $KR^G_*(G^-, \mathcal{A}^{k+\textsf{h}^\vee})$ explicitly.
		\item In \cite{M1}, FHT was proved using Segal's spectral sequence (cf. \cite{S1} for its definition), which was shown to collapse on the $E_2$-page, which in turn amounts to a free resolution of the Verlinde algebra. In fact Segal's spectral sequence is a generalized form of Mayer-Vietoris sequence. The $G$-invariant open cover used in FHT consists of certain open subsets in $G$ which deformation retract to conjugacy classes $G$-equivariantly. In \cite{F2}, the twisted equivariant $KR$-homology of $G$ is computed in the case where $R(G, \mathbb{C})=0$, following the spectral sequence arguments in \cite{M1}. The condition that there is no complex type representation is put in place just to ensure that the open cover is also invariant under the anti-involution on $G$, so that the Segal's spectral sequence can be adapted easily in this Real context. It was found that the Real $E_2$-page is the tensor product of the free resolution of the Verlinde algebra in the proof of \cite{M1} and the coefficient ring $KR_*(\text{pt})$, by virtue of the equivariant Real formality. Due to the nulhomotopy of the free resolution of the Verlinde algebra, the Real spectral sequence also collapses on the Real $E_2$-page, whose homology is the tensor product of the Verlinde algebra and the coefficient ring. Thus in this case $KR^G_*(G^-, \mathcal{A}^{k+\textsf{h}^\vee})\cong R_k(G)\otimes KR_*(\text{pt})$, in agreement with Theorem \ref{mainthm}.
	\end{enumerate}
\end{remark}

\begin{example}
	Consider $G=SU(2n)$ equipped with the quaternionic involution $g\mapsto J_n\overline{g}J_n^{-1}$, where $J_n=\begin{pmatrix} & -I_n\\ I_n&\end{pmatrix}$. In this case $R(G, \mathbb{C})=0$. Let $\sigma_{2n}\in R(G)$ (resp. $\sigma'_{2n}\in RH(G, \mathbb{R})$) be the class of the standard representation of $G$ (resp. the class of the standard representation equipped with the Quaternionic structure). By Corollary \ref{strthmhomology} (or Proposition \ref{KR_G} and Poincar\'e duality), 
	\[KR_*^G(\text{pt})\cong\mathbb{Z}[\sigma'_{2n}, \bigwedge\nolimits^2\sigma'_{2n}, \cdots, \bigwedge\nolimits^{2n-1}\sigma'_{2n}]\otimes KR_*(\text{pt})\]
	 where $\bigwedge\nolimits^{2k}\sigma'_{2n}\in KR^G_0(\text{pt})$ and $\bigwedge\nolimits^{2k-1}\sigma'_{2n}\in KR^G_{-4}(\text{pt})$ for $1\leq k\leq n$. According to \cite[Theorem 1.1]{Dou} the level $k$ Verlinde ideal $I_k$ is 
	 \[(W_{(k+1)L_1+L_2+\cdots+L_{2n-1}}, W_{(k+1)L_1+L_2+\cdots+L_{2n-2}}, \cdots, W_{(k+1)L_1+L_2}, W_{(k+1)L_1})\]
	 where $L_i$ is the $i$-th standard weight of $G$. The Real Verlinde ideal $RI_k$ is 
	  \[(W'_{(k+1)L_1+L_2+\cdots+L_{2n-1}}, W'_{(k+1)L_1+L_2+\cdots+L_{2n-2}}, \cdots, W'_{(k+1)L_1+L_2}, W'_{(k+1)L_1})\]
	  The prime notation is added to indicate that the classes of representations are equipped with either Real or Quaternionic structure, which also determines their degrees. $W'_{(k+1)L_1+L_2+\cdots+L_i}\in KR^G_0(\text{pt})$ if $k$ and $i$ are of the same parity, and $W'_{(k+1)L_1+L_2+\cdots+L_i}\in KR^G_{-4}(\text{pt})$ if $k$ and $i$ are not. By Theorem \ref{mainthm}, 
	  \[KR^G_*(G^-, \mathcal{A}^{k+2n})\cong \frac{\mathbb{Z}[\sigma'_{2n}, \bigwedge\nolimits^2\sigma'_{2n}, \cdots, \bigwedge\nolimits^{2n-1}\sigma'_{2n}]\otimes KR_*(\text{pt})}{(W'_{(k+1)L_1+L_2+\cdots+L_{2n-1}}, W'_{(k+1)L_1+L_2+\cdots+L_{2n-2}}, \cdots, W'_{(k+1)L_1})}.\]
\end{example}

\begin{appendix}
\section{}	 
This Appendix is devoted to the background material for $KR$-theory and Real representation rings, which is taken directly from \cite{F}.
	\subsection{$KR$-theory}
	\begin{definition}\label{krtheory}
	\begin{enumerate}
		\item A \emph{Real space} is a pair $(X, \sigma_X)$ where $X$ is a~topological space equipped with an involutive homeomorphism $\sigma_X$, i.e. $\sigma_X^2=\text{Id}_X$. A \emph{Real pair} is a pair $(X, Y)$ where~$Y$ is a~closed subspace of~$X$ invariant under $\sigma_X$.
		\item Let $\mathbb{R}^{p, q}$ be the Euclidean space $\mathbb{R}^{p+q}$ equipped with the involution which is the identity on the first $q$ coordinates and negation on the last $p$-coordinates. Let $B^{p, q}$ and $S^{p, q}$ be the unit ball and sphere in $\mathbb{R}^{p, q}$ with the inherited involution.
		\item A \emph{Real vector bundle} (to be distinguished from the usual real vector bundle) over $X$ is a~complex vector bundle $E$ over $X$ which itself is also a~Real space with involutive homeomorphism $\sigma_E$ satisfying
			\begin{enumerate}
				\item $\sigma_X\circ p=p\circ\sigma_E$, where $p: E\to X$ is the projection map,
				\item $\sigma_E$ maps $E_x$ to $E_{\sigma_X(x)}$ anti-linearly.
			\end{enumerate}
A \emph{Quaternionic vector bundle}
(to be distinguished from the usual quaternionic vector bundle) over $X$ is a complex vector bundle $E$ over $X$ equipped with an anti-linear lift $\sigma_E$ of $\sigma_X$ such that $\sigma_E^2=-\text{Id}_E$.
		\item A complex of Real vector bundles over the Real pair $(X, Y)$ is a complex of Real vector bundles over $X$
		\[0\longrightarrow E_1\longrightarrow E_2\longrightarrow\cdots\longrightarrow E_n\longrightarrow 0\]
		which is exact on $Y$.
		\item Let $X$ be a~Real space. The ring $KR(X)$ is the Grothendieck group of the isomorphism classes of Real vector bundles over $X$, equipped with the usual product structure induced by tensor product of vector bundles over $\mathbb{C}$. The relative $KR$-theory for a Real pair $KR(X, Y)$ can be similarly defined using complexes of Real vector bundles over $(X, Y)$, modulo homotopy equivalence and addition of acyclic complexes (cf. \cite{S2}). In general, the graded $KR$-theory ring of the Real pair $(X, Y)$ is given by
		\begin{gather*}
			KR^*(X, Y):=\bigoplus_{q=0}^7 KR^{-q}(X, Y),
		\end{gather*}
where
		\begin{eqnarray}\label{higherkrdef}
			KR^{-q}(X, Y):=KR\big(X\times B^{0,q}, X\times S^{0, q}\cup Y\times B^{0,q}\big).
		\end{eqnarray}
The ring structure of $KR^*$ is extended from that of~$KR$, in a~way analogous to the case of complex $K$-theory.
The number of graded pieces, which is 8, is a result of Bott periodicity for $KR$-theory (cf. \cite{At}).
\end{enumerate}
\end{definition}

Note that when $\sigma_X=\text{Id}_X$, then $KR(X)\cong KO(X)$. On the other hand, if $X\times \mathbb{Z}_2$ is given the involution which swaps the two copies of $X$, then $KR(X\times\mathbb{Z}_2)\cong K(X)$. Also, if $X$ is equipped with the trivial involution, then $KR(X\times S^{2,0})\cong KSC(X)$, the Grothendieck group of homotopy classes of self-conjugate bundles over $X$ (cf.~\cite{At}). In this way, it is natural to view $KR$-theory as a~unifying thread of $KO$-theory, $K$-theory and $\mathit{KSC}$-theory.

On top of the Real structure, we may further add compatible group actions and define equivariant $KR$-theory.
\begin{definition}\label{eqkrtheory}
	\begin{enumerate}
		\item A \emph{Real} $G$-\emph{space} $X$ is a~quadruple $(X, G, \sigma_X, \sigma_G)$ where a group $G$ acts on $X$ and $\sigma_G$ is an involutive automorphism of~$G$ such that
			\begin{gather*}
				\sigma_X(g\cdot x)=\sigma_G(g)\cdot\sigma_X(x).
			\end{gather*}
		\item A \emph{Real} $G$ \emph{vector bundle} $E$ over a Real $G$-space $X$ is a Real vector bundle and a $G$-bundle over $X$, and it is also a Real $G$-space.
		\item In a similar spirit, one can def\/ine equivariant $KR$-theory $KR_G^*(X, Y)$. Notice that the $G$-actions on $B^{0,q}$ and $S^{0,q}$ in the def\/inition of $KR_G^{-q}(X, Y)$ (cf. Equation (\ref{higherkrdef})) are trivial.
	\end{enumerate}
\end{definition}

\begin{definition}
	\begin{enumerate}
		\item Let $K^*(+)$ be the complex $K$-theory of a point extended to a $\mathbb{Z}_8$-graded algebra over $K^0(\text{pt})\cong\mathbb{Z}$, i.e. $\displaystyle K^*(+)\cong\mathbb{Z}[\beta]\left/\beta^4-1\right.$. Here $\beta\in K^{-2}(+)$, called the \emph{Bott class}, is the class of the reduced canonical line bundle on $\mathbb{CP}^1\cong S^2$.
		\item Let $\overline{\sigma_X^*}$ be the map defined on (equivariant) vector bundles on $X$ by $\overline{\sigma_X^*}E:=\sigma_X^*\overline{E}$. The involution induced by $\overline{\sigma_X^*}$ on $K^*_G(X)$ is also denoted by $\overline{\sigma_X^*}$ for simplicity.
	\end{enumerate}
\end{definition}

In the following proposition, we collect, for reader's convenience, some basic results of $KR$-theory (cf. \cite[Section 2]{Se}), some of which are stated in the more general context of equivariant $KR$-theory.

\begin{proposition}\label{krprelim}
	\begin{enumerate}
		\item\label{krcoeffring} We have
			\begin{gather*}
				KR^*(\text{pt})\cong\mathbb{Z}[\eta, \mu]\big/\big(2\eta, \eta^3, \mu\eta, \mu^2-4\big) ,
			\end{gather*}
where $\eta\in KR^{-1}(\text{pt})$, $\mu\in KR^{-4}(\text{pt})$ are represented by the reduced Hopf bundles of $\mathbb{R}\mathbb{P}^1$ and $\mathbb{H}\mathbb{P}^1$ respectively.
		\item\label{realcomplex} Let $c: KR^*_G(X)\to K^*_G(X)$ be the homomorphism which forgets the Real structure of Real vector bundles, and $r: K^*_G(X)\to KR^*_G(X)$ be the realification map defined by $[E]\mapsto [E\oplus\sigma_G^*\sigma_X^*\overline{E}]$, where $\sigma_G^*$ means twisting the original $G$-action on $E$ by $\sigma_G$ and the Real structure of $E\oplus\sigma_G^*\sigma_X^*\overline{E}$ is given by swapping the summands.
Then we have the following relations
		\begin{enumerate}
			\item $c(1)=1$, $c(\eta)=0$, $c(\mu)=2\beta^2$, where $\beta\in K^{-2}(\text{pt})$ is the Bott class,
			\item $r(1)=2$, $r(\beta)=\eta^2$, $r(\beta^2)=\mu$, $r(\beta^3)=0$,
			\item\label{rcmix} $r(xc(y))=r(x)y$, $cr(x)=x+\sigma_G^*\overline{\sigma_X^*}x$ and $rc(y)=2y$ for $x\in K^*_G(X)$ and $y\in KR^*_G(X)$, where $K^*_G(X)$ is extended to a~$\mathbb{Z}_8$-graded algebra by Bott periodicity.
		\end{enumerate}
	\end{enumerate}
\end{proposition}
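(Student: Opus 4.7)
The plan is to derive both parts from the well-known structure of $KO$-theory together with the general properties of the complexification and realification maps relating $KO$- and $K$-theory.

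For part (\ref{krcoeffring}), I would first invoke the observation (going back to Atiyah \cite{At}) that when $X$ carries the trivial involution one has $KR^*(X)\cong KO^*(X)$. Applied to a point this yields $KR^{-q}(\text{pt})\cong KO^{-q}(\text{pt})$, whose ring structure is classically $\mathbb{Z}[\eta_{KO},\mu_{KO}]/(2\eta_{KO},\eta_{KO}^3,\mu_{KO}\eta_{KO},\mu_{KO}^2-4)$ with $\eta_{KO}\in KO^{-1}(\text{pt})$ and $\mu_{KO}\in KO^{-4}(\text{pt})$. It then remains to verify that under this identification the reduced Hopf bundles on $\mathbb{RP}^1$ and $\mathbb{HP}^1$ correspond to $\eta_{KO}$ and $\mu_{KO}$ respectively. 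The former is standard (the M\"obius bundle generates $\widetilde{KO}(\mathbb{RP}^1)\cong\mathbb{Z}_2$), and the latter follows from the identification $\mathbb{HP}^1\cong S^4$ together with Bott periodicity.

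For part (\ref{realcomplex}), the identities split into two types. The first three $c$-values and the four $r$-values are direct computations on specific bundle representatives: $c(1)=1$ is tautological; $c(\eta)=0$ because the complexification of the M\"obius bundle on $\mathbb{RP}^1$ extends to $\mathbb{CP}^1$ and is stably trivial there; $c(\mu)=2\beta^2$ because complexifying a quaternionic line bundle yields twice the underlying rank-two complex bundle; $r(1)=2$ by the very definition $r([E])=[E\oplus\overline{E}]$; and $r(\beta^i)$ for $i=1,2,3$ can be read off by tracing $\beta\in K^{-2}(\text{pt})$ through the realification and invoking Bott periodicity, with $r(\beta^3)=0$ automatic since $KR^{-6}(\text{pt})=0$. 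The remaining three identities are formal: $cr(x)=x+\sigma_G^*\overline{\sigma_X^*}x$ is just the definition of $r$ post-composed with the forgetful map; $rc(y)=2y$ follows by unwinding the definitions and using that $\sigma_G^*\overline{\sigma_X^*}c(y)=c(y)$ when $y$ already carries a Real structure; and the projection formula $r(x\,c(y))=r(x)\,y$ is a consequence of the naturality of $r$ together with the fact that $c$ is a ring homomorphism, so that $r$ is a map of $KR^*$-modules.

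The main subtlety I expect to encounter lies in getting sign and grading conventions right across the $\mathbb{Z}_8$-graded structure of $KR^*$ and the $\mathbb{Z}_8$-extended grading on $K^*$. In particular, verifying $r(\beta)=\eta^2$ requires careful tracking of the Bott element through the $(1,1)$-periodicity isomorphism of $KR$-theory and confirming that the chosen generator $\eta$ (the M\"obius bundle) squares to the correct image; a sloppy convention choice here could easily produce spurious signs or factors of two and would be the place where the proof could go wrong.
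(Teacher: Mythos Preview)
Your proposal is correct and follows the same route as the paper, which simply cites \cite[Section~2]{Se} for part~(\ref{krcoeffring}) and \cite{At} for part~(\ref{realcomplex}); your outline is essentially what those references contain, namely the reduction $KR^*(\text{pt})\cong KO^*(\text{pt})$ for the coefficient ring and direct bundle-level verifications plus the formal module identities for $c$ and $r$. One minor simplification: $c(\eta)=0$ follows immediately from $K^{-1}(\text{pt})=0$, without needing to discuss extension to $\mathbb{CP}^1$.
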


\begin{proof}
(\ref{krcoeffring}) is given in \cite[Section 2]{Se}. The proof of (\ref{realcomplex}) is the same as in the nonequivariant case, which is given in \cite{At}.
\end{proof}

\begin{definition}\label{QuatK}
A Quaternionic $G$-vector bundle over a Real space $X$ is a complex vector bundle $E$ equipped with an anti-linear vector bundle endomorphism $J$ on $E$ such that $J^2=-\text{Id}_E$ and $J(g\cdot v)=
\sigma_G(g)\cdot J(v)$. Let $KH^*_G(X)$ be the corresponding $K$-theory constructed using Quaternionic~$G$-bundles over $X$.
\end{definition}
By generalizing the discussion preceding \cite[Lemma 5.2]{Se} to the equivariant and graded setting, we define
a natural transformation
\[t: \ KH_G^{-q}(X)\to KR^{-q-4}_G(X)\]
which sends
\[0\longrightarrow E_1\stackrel{f}{\longrightarrow} E_2\longrightarrow 0\]
to
\[0\longrightarrow \pi^*(\mathbb{H}\otimes_\mathbb{C} E_1)\stackrel{g}
{\longrightarrow}\pi^*(\mathbb{H}\otimes_\mathbb{C}E_2)\longrightarrow 0.\]
Here
\begin{enumerate}
	\item $E_i$, $i=1, 2$ are equivariant Quaternionic vector bundles on $X\times\mathbb{R}^{0, q}$ equipped with the Quaternionic structures $J_{E_i}$,
	\item $f$ is an equivariant Quaternionic vector bundle homomorphism which is an isomorphism outside $X\times\{0\}$,
	\item $\pi: X\times\mathbb{R}^{0, q+4}\to X\times\mathbb{R}^{0, q}$ is the projection map,
	\item $\mathbb{H}\otimes_\mathbb{C}E_i$ is the equivariant Real vector bundles equipped with the Real structure $J\otimes J_{E_i}$,
	\item $g$ is an equivariant Real vector bundle homomorphism def\/ined by $g(v, w\otimes e)=(v, vw\otimes f(e))$.
\end{enumerate}

The discussion in the last section of \cite{AS} can be extended to the equivariant setting and yields
\begin{proposition}\label{shiftbyfour}
$t$ is an isomorphism.
\end{proposition}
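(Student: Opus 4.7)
My plan is to adapt the nonequivariant argument from the last section of \cite{AS} to the equivariant setting. The essential observation is that all the auxiliary structures used in the construction of $t$ — the Quaternionic line $\mathbb{H}$, the coordinate space $\mathbb{R}^{0,4}$, and the Clifford-multiplication maps $w \mapsto vw$ on $\mathbb{H}$ for $v \in \mathbb{R}^{0,4}$ — carry the trivial $G$-action. Consequently, the $G$-equivariance of a pair $(E_1, E_2, f)$ is automatically preserved under $t$, and the verification that $t$ is an isomorphism will be insensitive to the $G$-action.

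First I would construct an inverse $s: KR^{-q-4}_G(X) \to KH_G^{-q}(X)$ by the analogous procedure with the roles of Real and Quaternionic swapped. Given a Real pair $(F_1, F_2, h)$ representing a class in $KR_G^{-q-4}(X) = KR_G(X\times B^{0,q+4}, X\times S^{0,q+4})$, one tensors with the Quaternionic line $\mathbb{H}^\vee$, equipped with its canonical Quaternionic structure $J^\vee$, so that $J^\vee \otimes J_{F_i}$ is a Quaternionic structure on $\mathbb{H}^\vee \otimes_\mathbb{C} F_i$. The Clifford-like action of the last four coordinates of $\mathbb{R}^{0,q+4}$ on $\mathbb{H}^\vee$ then lets one contract these four coordinates and produce a Quaternionic pair on $(X\times B^{0,q}, X\times S^{0,q})$, representing a class in $KH_G^{-q}(X)$.

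Next I would verify that $t \circ s$ and $s \circ t$ are the identity. Both compositions amount to exterior multiplication by the class of $\mathbb{H} \otimes_\mathbb{C} \mathbb{H}^\vee$ on $\mathbb{R}^{0,8} \to \text{pt}$ with its canonical Real structure; modulo the $(1,1)$- and $8$-periodicity of equivariant $KR$-theory, together with Proposition \ref{krprelim}(\ref{krcoeffring}), this represents the generator $1 \in KR_G^0(\text{pt})$, so the compositions are indeed identity maps. The main obstacle is the careful bookkeeping of the Real and Quaternionic structures under the tensor product $\mathbb{H} \otimes_\mathbb{C} \mathbb{H}^\vee$ and the matching of the Clifford actions of the four extra coordinates on the two sides, which is the heart of the nonequivariant Atiyah–Segal argument; once this is in place, the $G$-equivariance is carried along for free, because all the Clifford-theoretic data is $G$-invariant, so the nonequivariant proof transfers essentially verbatim.
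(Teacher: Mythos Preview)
Your proposal is correct and takes essentially the same approach as the paper: the paper simply asserts that the discussion in the last section of \cite{AS} can be extended to the equivariant setting, and your outline makes explicit precisely this extension, noting (as is the key point) that the auxiliary data $\mathbb{H}$, $\mathbb{R}^{0,4}$, and the Clifford multiplication carry trivial $G$-action so the nonequivariant argument transfers verbatim.
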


\subsection{The Real representation ring $RR(G)$}
In this Section, we review the basics of Real representations elaborated in \cite{F}. 
\begin{definition}\label{RRRing}
	\begin{enumerate}
		\item  A Real representation $V$ of $G$ is a finite-dimensional complex representation of $G$ equipped with an anti-linear involution $\sigma_V$ such that $\sigma_V(g\cdot v)=\sigma_G(g)\cdot\sigma_V(v)$. Similarly a Quaternionic representation is one equipped with an anti-linear endomorphism $J_V$ such that $J_V^2=-\text{Id}_V$ and $J_V(g\cdot v)=\sigma_G(g)\cdot J_V(v)$. For $\mathbb{F}=\mathbb{R}$ or $\mathbb{H}$, a morphism between $V$ and $W\in \mathcal{R}ep_\mathbb{F}(G)$ is a linear transformation from $V$ to $W$ which commutes with $G$ and respects both $\sigma_V$ and $\sigma_W$. Let $\mathcal{R}ep_\mathbb{R}(G)$ (resp. $\mathcal{R}ep_\mathbb{H}(G)$) be the category of Real (resp. Quaternionic) representations of $G$. The Real (resp. Quaternionic) representation group of $G$, denoted by $RR(G)$ (resp. $RH(G)$) is the Grothendieck group of $\mathcal{R}ep_\mathbb{R}(G)$ (resp. $\mathcal{R}ep_\mathbb{H}(G)$). $RR(G)$ is also a ring with tensor product as ring multiplication. 
		\item  Let $V$ be an irreducible Real (resp. Quaternionic) representation of $G$. Its \emph{commuting field} is defined to be $\text{Hom}_G(V, V)^{\sigma_V}$, which is isomorphic to either $\mathbb{R}$, $\mathbb{C}$ or $\mathbb{H}$. Let $RR(G, \mathbb{F})$ (resp. $RH(G, \mathbb{F})$) be the abelian group generated by the isomorphism classes of irreducible Real (resp. Quaternionic) representations with $\mathbb{F}$ as the commuting field.
		\item Let $R(G, \mathbb{C})$ be the abelian group generated by the isomorphism classes of those irreducible complex representations $V$ satisfying $V\ncong\sigma_G^*\overline{V}$. 
		\item Let $V$ be a $G$-representation. We use $\sigma_G^*V$ to denote the~$G$-representation with the same underlying vector space where the~$G$-action is twisted by $\sigma_G$, i.e. $\rho_{\sigma_G^*V}(g)v=\rho_V(\sigma_G(g))v$. We will use $\overline{\sigma_G^*}$ to denote the map on $R(G)$ def\/ined by $[V]\mapsto [\sigma_G^*\overline{V}]$.
	\end{enumerate}
\end{definition}
The following Proposition gives characterizations of Real and Quaternionic representations of various types. See Proposition \ref{OsheaSjamaar} (\ref{RRRRHR}) for alternative characterizations for $RR(G, \mathbb{R})$ and $RH(G, \mathbb{R})$.
\begin{proposition}\label{RealQuatStr}\cite[Propositions 2.16 and 2.18]{F}
\begin{enumerate}
	\item Let~$V$ be an irreducible Real representation of $G$.
	\begin{enumerate}\itemsep=0pt
		\item\label{realtype} The commuting field of~$V$ is isomorphic to $\mathbb{R}$ iff~$V$ is an irreducible complex representation and there exists $f\in\text{Hom}_G(V, \sigma_G^*\overline{V})$ such that $f^2=\text{Id}_V$.
		\item The commuting field of~$V$ is isomorphic to $\mathbb{C}$ iff $V\cong W\oplus\sigma_G^*\overline{W}$ as complex~$G$-representa\-tions, where $W$ is an irreducible complex~$G$-representation and $W\ncong \sigma_G^*\overline{W}$, and $\sigma_V(w_1, w_2)$ $=(w_2, w_1)$.
		\item\label{quattype} The commuting field of~$V$ is isomorphic to $\mathbb{H}$ iff $V\cong W\oplus \sigma_G^*\overline{W}$ as complex~$G$-repre\-sen\-ta\-tions, where $W$ is an irreducible complex~$G$-representation and there exists
$f\in\text{Hom}_G(W,\sigma_G^*\overline{W})$ such that $f^2=-\text{Id}_W$, and $\sigma_V(w_1, w_2)=(w_2, w_1)$.
	\end{enumerate}
	\item Let $V$ be an irreducible Quaternionic representation of $G$.
	\begin{enumerate}\itemsep=0pt
		\item The commuting field of $V$ is isomorphic to $\mathbb{H}$ iff $V\cong W\oplus\sigma_G^*\overline{W}$ as complex $G$-representations, where~$W$ is an irreducible complex $G$-representation and there exists $f\in\text{Hom}_G(W, \sigma_G^*\overline{W})$ such that $f^2=\text{Id}_W$ with $J(w_1, w_2)=(-w_2, w_1)$.
		\item The commuting field of $V$ is isomorphic to $\mathbb{C}$ iff $V\cong W\oplus\sigma_G^*\overline{W}$ as complex $G$-representations, where~$W$ is an irreducible complex $G$-representation and $W\ncong\sigma_G^*\overline{W}$ with $J(w_1, w_2)=(-w_2, w_1)$. 
		\item The commuting field of $V$ is isomorphic to $\mathbb{R}$ iff $V$ is an irreducible complex $G$-representation and there exists $f\in\text{Hom}_G(V, \sigma_G^*\overline{V})$ such that $f^2=-\text{Id}_V$.
	\end{enumerate}
\end{enumerate}	
\end{proposition}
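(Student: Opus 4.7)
The plan is to analyze an irreducible Real representation $V$ through its structure as a complex $G$-representation and use Schur's lemma. The commuting field $\mathrm{Hom}_G(V,V)^{\sigma_V}$ is a real division algebra (hence $\mathbb{R}$, $\mathbb{C}$, or $\mathbb{H}$), and the key observation enabling the classification is that although $\sigma_V$ is anti-linear, conjugation by it on $\mathrm{Hom}_G(V,V)$ is $\mathbb{C}$-linear (two anti-linearities cancel), so the commuting field is the fixed subring of a $\mathbb{C}$-linear involution on the complex division algebra $\mathrm{End}_G(V)$.

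First I would handle Case A, where $V$ is irreducible as a complex $G$-representation. Then $\sigma_V$ repackages as a $G$-equivariant $\mathbb{C}$-linear isomorphism $f \colon V \to \sigma_G^*\overline{V}$ via $f(v) = \sigma_V(v)$, and the relation $\sigma_V^2 = \mathrm{Id}$ translates, under the canonical identification $\sigma_G^*\overline{\sigma_G^*\overline{V}} = V$, to $f^2 = \mathrm{Id}_V$. This is precisely part (1)(a), with commuting field $\mathbb{R}$; note that the complex and quaternionic types cannot occur here.

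Next I would treat Case B, where $V$ is reducible as a complex $G$-representation. Decomposing $V$ into $G$-isotypic components, I observe that $\sigma_V$ carries the $W$-isotypic piece to the $\sigma_G^*\overline{W}$-isotypic piece, so irreducibility of $V$ as a Real representation forces $V \cong W \oplus \sigma_G^*\overline{W}$ for a single irreducible complex $W$. If $W \not\cong \sigma_G^*\overline{W}$, then $\sigma_V$ must swap the summands and can be normalized to $\sigma_V(w_1, w_2) = (w_2, w_1)$, with $\mathrm{End}_G(V) = \mathbb{C}\oplus\mathbb{C}$ and fixed subring the diagonal, yielding commuting field $\mathbb{C}$ as in part (1)(b). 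If instead $W \cong \sigma_G^*\overline{W}$ via some $f$, Schur gives $f^2 \in \mathrm{End}_G(W) = \mathbb{C}$, which rescales to $\pm \mathrm{Id}$; the choice $f^2 = +\mathrm{Id}$ would make $W$ itself an irreducible Real representation of real type, contradicting irreducibility of the strictly larger $V$, so $f^2 = -\mathrm{Id}$, giving part (1)(c) with commuting field $\mathbb{H}$.

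The Quaternionic classification (part 2) is obtained by replacing $\sigma_V^2 = \mathrm{Id}$ with $J_V^2 = -\mathrm{Id}$ throughout. This flip interchanges the sign constraint on $f^2$ in the irreducible-complex case (yielding commuting field $\mathbb{R}$ in part (2)(c)) and in the subcase $W \cong \sigma_G^*\overline{W}$ of the reducible case (where now $f^2 = +\mathrm{Id}$ is forced, yielding $\mathbb{H}$ in part (2)(a)), while the subcase $W \not\cong \sigma_G^*\overline{W}$ proceeds identically and produces part (2)(b). The main obstacle is verifying that the explicit normal forms for $\sigma_V$ and $J_V$ claimed in the reducible cases are canonical up to Real (respectively Quaternionic) isomorphism, which reduces to a direct computation using $\mathrm{End}_G(W) = \mathbb{C}$ and the observation that any two anti-linear involutions (resp.\ quaternionic structures) swapping the two summands differ by a unit scalar absorbable into a change of basis; this bookkeeping is routine but somewhat tedious.
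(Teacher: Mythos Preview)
The paper does not prove this proposition; it is quoted from \cite{F} (Propositions 2.16 and 2.18 there) with no argument given, so there is nothing in the present paper to compare your approach against. Your Schur-lemma analysis is the standard route and is essentially correct.

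One small slip worth flagging: the involution $\varphi\mapsto\sigma_V\varphi\sigma_V$ on $\mathrm{End}_G(V)$ is \emph{conjugate}-linear, not $\mathbb{C}$-linear (each factor of $\sigma_V$ conjugates scalars once, so the net effect conjugates once, not zero times); what is true is that the \emph{output} $\sigma_V\varphi\sigma_V$ is again a $\mathbb{C}$-linear map $V\to V$. Relatedly, $\mathrm{End}_G(V)$ is a division algebra only in your Case~A; in Case~B it is $\mathbb{C}\times\mathbb{C}$ or $M_2(\mathbb{C})$. Neither point damages your argument: the fixed subring of the anti-linear involution on $\mathbb{C}$ is $\mathbb{R}$, on $\mathbb{C}\times\mathbb{C}$ (swap-and-conjugate) is the diagonal $\mathbb{C}$, and on $M_2(\mathbb{C})$ in the $f^2=-\mathrm{Id}$ situation is a copy of $\mathbb{H}$, exactly matching the three cases you list. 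The remaining bookkeeping (normalizing $\sigma_V$ or $J_V$ to the displayed swap form, and ruling out $f^2=+\mathrm{Id}$ by exhibiting a proper Real subrepresentation) is as routine as you indicate.
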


\begin{proposition}\label{KR_G}
	\begin{enumerate}
		\item \cite[Proposition 3.3]{F}
	The map 
	\begin{align*}
f: & (RR(G, \mathbb{R})\oplus RH(G, \mathbb{R}))\otimes KR^*(\text{pt})\oplus r(R(G, \mathbb{C})\otimes K^*(+))\to
KR^*_G(\text{pt}),\\
&\rho_1\otimes x_1\oplus \rho_2\otimes x_2\oplus r(\rho_3\otimes \beta^i)\mapsto \rho_1\cdot x_1+\rho_2\cdot
x_2+r(\rho_3\otimes\beta^i)
	\end{align*}
	is an isomorphism of graded rings. Here elements in $RR(G, \mathbb{R})$ and $RH(G, \mathbb{R})$ are assigned with degree 0 and $-4$ respectively. 
		\item\label{realquatexch} We have that 
		\begin{align*}
			&\text{if }\rho\in RR(G, \mathbb{R})\subseteq KR^G_0(\text{pt}),\ \text{then }\rho\cdot\mu=\rho+\rho\in RH(G, \mathbb{H})\subseteq KR^G_{-4}(\text{pt}).\\
			&\text{If }\rho\in RH(G, \mathbb{R})\subseteq KR^G_{-4}(\text{pt}),\ \text{then }\rho\cdot\mu=\rho+\rho\in RR(G, \mathbb{H})\subseteq KR^G_0(\text{pt}).\\
			&\text{If }\rho\in R(G, \mathbb{C})\subseteq K^G_0(\text{pt}),\ \text{then }r(\rho)=\rho+\sigma_G^*\overline{\rho}\in r(R(G, \mathbb{C}))=RR(G, \mathbb{C})\subseteq KR^G_0(\text{pt}),\\
			&\text{and }r(\rho\cdot\beta^2)=\rho+\sigma_G^*\overline{\rho}\in r(R(G, \mathbb{C})\cdot\beta^2)=RH(G, \mathbb{C})\subseteq KR^G_{-4}(\text{pt}).
		\end{align*}
		The above representations $\rho+\rho$ and $\rho+\sigma_G^*\overline{\rho}$ should be equipped with the appropriate Real or Quaternionic structure specified by Proposition \ref{RealQuatStr}.
		\item Let 
		\begin{align*}
			\widetilde{R}&:=RR(G, \mathbb{R})\oplus RH(G, \mathbb{R})\oplus RR(G, \mathbb{C})\\
					   &=(RR(G, \mathbb{R})\oplus RH(G, \mathbb{R}))\otimes KR_0(\text{pt})\oplus r(R(G, \mathbb{C}))\\
					   &\subseteq KR^G_0(\text{pt})\oplus KR^G_{-4}(\text{pt})
		\end{align*}
		and $c: \widetilde{R}\to R(G)\cong K^G_0(\text{pt})$ be the map forgetting Real or Quaternionic structures. Then $c$ is injective and $\text{im}(c)=R(G)^{\overline{\sigma_G^*}}$. In particular, if $RR(G, \mathbb{C})=0$, $c$ is a ring isomorphism.
	\end{enumerate}
\end{proposition}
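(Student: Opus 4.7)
\textbf{Proof plan for Proposition \ref{KR_G}.} The plan is to handle the three parts sequentially, using the structure theorem (Corollary \ref{strthmhomology}) applied to $X=\text{pt}$ to set up the additive decomposition, then bootstrap everything else from the identities $\mu=r(\beta^2)$ and the basic $KR^*(\text{pt})$-module relations of Proposition \ref{krprelim}, together with the representation-theoretic classification of Proposition \ref{RealQuatStr}.

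For part (1), I would invoke Corollary \ref{strthmhomology} with $X=\text{pt}$ and trivial DD bundle. Here $K^G_*(\text{pt})=R(G)$ is torsion-free, and the involution $\overline{\sigma_G^*}$ breaks $R(G)$ into a $+1$-eigenpart (spanned by irreducible $\rho$ with $\sigma_G^*\overline{\rho}\cong\rho$) together with a complementary free part of the form $T\oplus \overline{\sigma_G^*} T$ with $T=R(G,\mathbb{C})$; there is no $-1$-eigenpart because $\overline{\sigma_G^*}$ permutes a basis. By Proposition \ref{RealQuatStr}(\ref{realtype}),(\ref{quattype}), every $+1$-eigenvector lifts canonically to $RR(G,\mathbb{R})\oplus RH(G,\mathbb{R})$ according to whether the Schur form $f$ satisfies $f^2=\text{Id}$ or $f^2=-\text{Id}$, with the Quaternionic summand placed in degree $-4$. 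The structure theorem then gives the claimed isomorphism; the ring structure is determined from the $KR^*(\text{pt})$-multiplications in Proposition \ref{krprelim}(\ref{krcoeffring}).

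For part (2), I would compute each product directly. Using $\mu=r(\beta^2)$ and the projection formula (Proposition \ref{krprelim}(\ref{rcmix})), for $\rho\in RR(G,\mathbb{R})$ one obtains $\rho\cdot\mu=\rho\cdot r(\beta^2)=r(\beta^2\cdot c(\rho))=c(\rho)+\sigma_G^*\overline{c(\rho)}$, and since $c(\rho)\cong\sigma_G^*\overline{c(\rho)}$ the underlying complex representation is $\rho\oplus\rho$. The Real structure on $r(\cdot)$ at the level of degree $-4$ swaps summands twisted by the $f^2=\text{Id}$ identification, which is precisely the recipe of Proposition \ref{RealQuatStr}(2a) for a Quaternionic representation with commuting field $\mathbb{H}$. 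The other three identities are symmetric: for $\rho\in RH(G,\mathbb{R})$ the Schur form has $f^2=-\text{Id}$, placing $\rho\cdot\mu$ in $RR(G,\mathbb{H})$ by Proposition \ref{RealQuatStr}(1c); for $\rho\in R(G,\mathbb{C})$ the formula $r(x)=x+\sigma_G^*\overline{x}$ (Proposition \ref{krprelim}(\ref{rcmix})) gives the underlying complex representation as $\rho+\sigma_G^*\overline{\rho}$, and the two cases $i=0$ and $i=2$ land in $RR(G,\mathbb{C})$ and $RH(G,\mathbb{C})$ respectively because the swap on $r(\rho)$ versus $r(\rho\otimes\beta^2)$ differ by the squaring behavior of $\beta$.

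For part (3), injectivity of $c$ is essentially a corollary of part (1): each of the three summands of $\widetilde{R}$ maps to $R(G)$ by sending a basis to a set of $\mathbb{Z}$-independent vectors (irreducibles fixed by $\overline{\sigma_G^*}$, or 2-orbit sums $\rho+\sigma_G^*\overline{\rho}$), and the three images are disjoint. To identify the image with $R(G)^{\overline{\sigma_G^*}}$, observe that every element in the image is visibly $\overline{\sigma_G^*}$-invariant; conversely, any $\overline{\sigma_G^*}$-invariant element $\sum n_i\rho_i$ decomposes, after sorting irreducibles according to whether $\sigma_G^*\overline{\rho_i}\cong\rho_i$ or not, into a sum of terms lifting to $RR(G,\mathbb{R})$, $RH(G,\mathbb{R})$, and $RR(G,\mathbb{C})$ respectively. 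The main obstacle is not any single step — they are all essentially bookkeeping — but rather keeping the $\mathbb{Z}/8$-grading consistent when shuffling between the complex model, the realification map $r$, and the degree shift imposed on Quaternionic generators; this is managed by systematically rewriting every product via $\mu=r(\beta^2)$ and the identity $r(xc(y))=r(x)y$, so that everything reduces to computations in $K^G_*(\text{pt})$.
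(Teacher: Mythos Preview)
Your proposal is correct and aligns with the paper's approach: the paper's own proof is a one-line citation (part (1) to \cite[Proposition~3.3]{F}, parts (2) and (3) to Proposition~\ref{RealQuatStr} and \cite[Propositions~2.17, 2.24]{F}), and your plan simply unpacks those references, using the structure theorem for part (1) as the paper itself notes is possible in Remark~\ref{lastrmk}. One cosmetic point: for $X=\text{pt}$ you may as well invoke Theorem~\ref{strthm} directly rather than its homology variant Corollary~\ref{strthmhomology}, since $KR^*_G(\text{pt})$ is what you are computing and no Poincar\'e duality is needed.
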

\begin{proof}
	For (2) use Proposition \ref{RealQuatStr}. For (3) use Proposition \ref{RealQuatStr} and \cite[Propositions 2.17, 2.24]{F}.
\end{proof}
\end{appendix}

\footnotesize\textsc{Department of Mathematics, Cornell University, Ithaca, NY 14853, USA\\
\\
National Center for Theoretical Sciences, Mathematics Division, National Taiwan University, Taipei 10617, Taiwan\\
\\
School of Mathematical Sciences and Institute for Geometry and its Applications, the University of Adelaide, Adelaide, SA 5005, Australia\\
\\
E-mail: }\texttt{chi-kwong.fok@adelaide.edu.au}\\
\textsc{URL: }\texttt{https://sites.google.com/site/alexckfok}
\end{document}